\newcommand{\scal}[2]{\langle #1,#2\rangle}
\newcommand{\rr}[1]{\mathbf R^{#1}}
\newcommand{\rrstar}[1]{\mathbf R_*^{#1}}
\newcommand{\zz}[1]{\mathbf Z^{#1}}
\newcommand{\nm}[2]{\Vert #1\Vert _{#2}}
\newcommand{\NM}[2]{\left \Vert #1\right \Vert _{#2}}
\newcommand{\op}{\operatorname{Op}}
\newcommand{\sets}[2]{\{ \, #1\, ;\, #2\, \} }
\newcommand{\ep}{\varepsilon}
\newcommand{\fy}{\varphi}
\newcommand{\cdo}{\, \cdot \, }
\newcommand{\eabs}[1]{\langle #1\rangle}     
\newcommand{\vrum}{\vspace{0.1cm}}
\newcommand{\Trm}{\operatorname{Tr}}
\newcommand{\nn}[1]{{\mathbf N}^{#1}}
\newcommand{\maclB}{\mathcal B}
\newcommand{\maclL}{\mathcal L}
\newcommand{\maclM}{\mathcal M}
\newcommand{\maclS}{\mathcal S}
\newcommand{\maclW}{\mathcal W}
\newcommand{\mascB}{\mathscr B}
\newcommand{\mascF}{\mathscr F}
\newcommand{\mascI}{\mathscr I}
\newcommand{\mascN}{\mathscr N}
\newcommand{\mascP}{\mathscr P}
\newcommand{\mascS}{\mathscr S}
\newcommand{\GL}{\mathbf{M}}
\numberwithin{equation}{section}          
\newtheorem{thm}{Theorem}
\numberwithin{thm}{section}
\newtheorem*{tom}{\rubrik}
\newcommand{\rubrik}{}
\newtheorem{prop}[thm]{Proposition}
\newtheorem{cor}[thm]{Corollary}
\newtheorem{lemma}[thm]{Lemma}
\theoremstyle{definition}
\newtheorem{defn}[thm]{Definition}
\newtheorem{example}[thm]{Example}
\theoremstyle{remark}
\newtheorem{rem}[thm]{Remark}
\author{Joachim Toft}
\address{Department of Mathematics,
Linn{\ae}us University, V{\"a}xj{\"o}, Sweden}
\email{joachim.toft@lnu.se}
\author{Divyang G. Bhimani}
\address{Department of Mathematics,
Indian Institute of Science Education and Research, Pune, India}
\email{divyang.bhimani@iiserpune.ac.in}
\author{Ramesh Manna}
\address{School of Mathematical Sciences, National Institute
of Science Education and Research Bhubaneswar, Jatni, India}
\email{rameshmanna@niser.ac.in}
\title{Trace mappings on quasi-Banach modulation spaces and
applications to pseudo-differential operators of amplitude type}
\keywords{Modulation spaces, Gelfand-Shilov spaces, Wiener amalgam
spaces, Trace map, amplitude, pseudo-differential
operators}
\subjclass[2010]{35S05, 46F05, 42B35}
\begin{document}

\begin{abstract}
We deduce trace properties for modulation spaces (including
certain Wiener-amalgam spaces) of Gelfand-Shilov distributions.
We use these results to show that $\Psi$dos
with amplitudes in suitable modulation spaces, agree with normal type
$\Psi$dos whose symbols belong to (other) modulation spaces. In
particular we extend and improve the style of trace results for modulation spaces
in \cite{CorRod,Toft2} to include quasi-Banach modulation spaces.
We also apply our results to obtain Schatten-von 
Neumann and nuclearity properties for $\Psi$dos with amplidudes 
in modulation spaces, extending earlier work in
\cite{DeRuWa,Sjo,Toft2,Toft20}.
\end{abstract}

\maketitle

\par

\section{Introduction}\label{sec0}

\par

In the paper we deduce continuity properties of trace mappings when acting
on extended classes of modulation spaces. Thereafter we apply these
properties to get continuity and identification properties for amplitude type
pseudo-differential operators with amplitudes in modulation spaces. In contrast
to most of earlier approaches, e.{\,}g. \cite{FeHuWa,Toft2}
we allow the Lebesgue exponents of the involved modulation spaces to stay
in the full inverval $(0,\infty ]$. In particular, the modulation spaces in our
investigations are quasi-Banach spaces, which might not be Banach spaces.
Note that these Lebesgue exponents in \cite{Toft2}
should belong to the smaller interval $[1,\infty]$, while in \cite{FeHuWa}
they should belong to $(0,\infty )$ and thereby not allowed to attain $\infty$.
In particular, our investigations include trace mapping properties of the
modulation space $M^{\infty ,q}$, $q\in (0,1)$, while analogous investigations
in \cite{FeHuWa,Toft2} do not host these spaces.

\medspace

A trace map is an operator which reduce the dimension of the domain for
functions or distributions, by fixing some coordinates. For example, let
$f(x_1,x_2)$ be a function which depends on $x_1\in \Omega _1$ and
$x_2\in \Omega _2$, let $z\in \Omega _2$ be fixed. Then the
map $\Trm _{z}$ which takes $(x_1,x_2)\mapsto f(x_1,x_2)$ into
$x_1\mapsto f(x_1,z)$, i.{\,}e.,
$$
(\Trm _{z}f)(x_1) = f(x_1,z),
$$
can be considered as the archetype of trace mappings, provided
$x_1\mapsto f(x_1,z)$ makes sense as a function.
(See \cite{Ho1} and Section \ref{sec1} for notations.)

\par

Trace mappings
appear in natural ways in different kinds of problems, e.{\,}g. in boundary
value problems of partial differential equations. If
$P(t, x,D_t,D_x)$ is a partial differential operator, and $f$ and
$u_0$ are fixed functions or distributions, then
$$
\begin{cases}
P(t, x,D_t,D_x)u(t,x) =f(x,t), & t>0,
\\[1ex]
u(0,x) = u_0(x),
\end{cases}
$$
is an example of an initial value problem. It is expected that
the solution $u(t,x)$ should possess suitable continuity
and differentiability properties as well as the trace map
which takes $u(t,x)$ into $u_0(x)=u(0,x)$ is well-defined.
See e.{\,}g. \cite{Schu} and the references therein for more facts on this.

\par

Another example where
trace mappings appear naturally concerns pseudo-differential operators
of amplitude types.
For any amplitude $a\in \mascS (\rr {3d})$, the pseudo-differential operator
$\op (a)$ is the linear and continuous map from $\mascS (\rr d)$ to
$\mascS (\rr d)$, given by
\begin{align*}
(\op (a)f)(x) &= (2\pi )^{-d}\iint _{\rr {2d}} a(x,y,\zeta )
e^{i\scal {x-y}\zeta}f(y)\, dyd\zeta .
\intertext{Any such operator may in a unique way be expressed as a
pseudo-differential operator of standard
or Kohn-Nirenberg type with symbol in $\mascS (\rr {2d})$. That is, there is
a unique $a_0\in \mascS (\rr {2d})$ such that $\op (a)$ above is equal to
$a_0(x,D)=\op _0(a_0)$, where}
(\op _0(a_0)f)(x) &= (2\pi )^{-d}\iint _{\rr {2d}} a_0(x,\zeta )
e^{i\scal {x-y}\zeta}f(y)\, dyd\zeta .
\end{align*}
The drop of variable $y$ when passing from $a(x,y,\zeta )$ to $a_0(x,\zeta )$
implies that somewhere in the process, a trace map appear. Indeed,
$a_0$ is obtained by the formula
$$
a_0(x,\zeta ) = (e^{i\scal {D_\zeta}{D_y}}a)(x,y,\zeta ) \Big \vert _{y=x}
= (e^{i\scal {D_\zeta}{D_y}}a)(x,x+y,\zeta ) \Big \vert _{y=0}
$$
(see e.{\,}g. \cite[Section 18.2]{Ho1}).

\medspace

Trace mappings often possess convenient continuity
properties when acting on spaces of continuous functions. For example, the mappings
\begin{alignat}{2}
\Trm _{z} &: C^N(\rr {d_1+d_2}) & &\to C^N(\rr {d_1}),
\label{Eq:TraceMapClassical1}
\\[1ex]
\Trm _{z} &: \mascS (\rr {d_1+d_2}) & &\to \mascS (\rr {d_1}), 
\label{Eq:TraceMapClassical2}
\intertext{and}
\Trm _{z} &: \Sigma _1(\rr {d_1+d_2}) & &\to \Sigma _1(\rr {d_1}),
\label{Eq:TraceMapClassical3}
\end{alignat}
are continuous (and surjective).
More sensitive situations appear when the intended domains of $\Trm _{z}$ contain
elements with lack of continuity, or, more dreadful, host elements with heavy singularities.

\par

A classical example on such situations concerns trace mappings when acting on
Sobolev spaces $H^2_s(\rr {d_1+d_2})$. For $s>\frac {d_2}2$ one has that the
mappings \eqref{Eq:TraceMapClassical2} and \eqref{Eq:TraceMapClassical3}
are uniquely extendable to a continuous map
\begin{equation}\label{Eq:TraceSobolev}
\Trm _{z} : H^2_s(\rr {d_1+d_2}) \to H^2_{s_0}(\rr {d_1}),
\end{equation}
provided $s\ge s_0+\frac {d_2}2$ (see e.{\,}g. \cite{Ada}).

\par

The latter trace property was extended in \cite{Toft2} to modulation spaces which
include the Sobolev spaces above as special cases. We recall that the
modulation spaces $M^{p,q}_{(\omega )}(\rr d)$ and
$W^{p,q}_{(\omega )}(\rr d)$, introduced by Feichtinger in \cite{Fei0,Fei1}
and further developed in \cite{Fei6, FeiGro1,FeiGro2,FeiGro3,Gro2}
are the sets of (Gelfand-Shilov) distributions
whose short-time Fourier transforms belong to the weighted and mixed
Lebesgue spaces $L^{p,q}_{(\omega )}(\rr {2d})$ respectively
$L^{p,q}_{*,(\omega )}(\rr {2d})$.
Here $\omega$ is a weight function on phase (or time-frequency shift) space
and $p,q\in (0,\infty ]$. Note that $W^{p,q}_{(\omega )}(\rr d)$
is also an example on Wiener-amalgam spaces (cf. \cite{Fei6}).

\par

Since the introduction, modulation spaces have entered several
fields within mathematics and science, e.{\,}g.
the theory of pseudo-differential operators (see e.{\,}g.
\cite{Grochenig1b,Sjo,Tac,Toft15,BenOko,CorRod} and the references therein
for more recent progresses).

\par

It follows from
Theorem 3.2 in \cite{Toft2} that if $p,q\in [1,\infty ]$ and
\begin{equation}\label{Eq:TraceWeightCondIntro}
\omega _0(x,\xi )\eabs \eta ^t
\le C \omega (x,y,\xi ,\eta ),
\qquad
t\ge \frac {d_2}{q'}
\end{equation}
for some constant $C>0$, 
with the latter inequality strict when $q>1$, then
\eqref{Eq:TraceMapClassical3} is uniquely
extendable to a continuous map
\begin{align}
\Trm _{z} : M^{p,q}_{(\omega )}(\rr {d_1+d_2}) &\to M^{p,q}_{(\omega _0)}(\rr {d_1}),
\label{Eq:IntroTraceModulation}
\intertext{that is,}
\nm {\Trm _{z}f}{M^{p,q}_{(\omega _0)}} &\lesssim \nm f{M^{p,q}_{(\omega )}}.
\label{Eq:IntroTraceModulation2}
\end{align}
Here $\eabs x\equiv (1+|x|^2)^{\frac 12}$, $x\in \rr d$, as usual.
If
$$
p=q=2,
\quad
\omega (x,y,\xi ,\eta ) = \eabs {(\xi ,\eta )}^{s},
\quad
\omega _0(x,\xi ) = \eabs {\xi}^{s_0}
\quad \text{and}\quad
s\ge s_0+\frac {d_2}2,
$$
then \eqref{Eq:IntroTraceModulation}
agrees with \eqref{Eq:TraceSobolev}.

\par

There are other extensions of \eqref{Eq:TraceSobolev}.
In \cite[Theorem 3.3]{FeHuWa}, Feichtinger, Huang and Wang
use uniform-frequency decomposition techniques to establish trace properties
on $\alpha$-modulation spaces, and thereby achieve trace properties
on modulation and Besov spaces as special cases
(cf. \cite[Theorem 3.1]{FeHuWa}). 
In \cite{Schn}, Schneider deduce trace mapping results for Besov and
Triebel-Lizorkin spaces, allowing the involved Lebesgue exponents
to belong to the full interval $(0,\infty ]$.

\par

In Section \ref{sec2} we extend the
trace mapping result \eqref{Eq:IntroTraceModulation}
in the sense of relaxing the conditions of the involved
weight functions, allowing the Lebesgue exponents to belong to the
full interval $(0,\infty ]$, and complete \eqref{Eq:IntroTraceModulation}
and \eqref{Eq:IntroTraceModulation2} with trace maps for $W^{p,q}_{(\omega )}$
spaces. Especially we prove
$$
\Trm _{z} :
W^{p,q}_{(\omega )}(\rr {d_1+d_2})
\to
W^{p,q}_{(\omega _0)}(\rr {d_1})
$$
is continuous when $\omega$ and $\omega _0$ are the same as
for \eqref{Eq:IntroTraceModulation}. (See Theorems \ref{Thm:TraceMod}
and \ref{Thm:TraceMod2}.)
The involved weight functions should satisfy conditions of the form
\begin{align}
\omega (x+y) &\lesssim \omega (x)e^{r|y|}
\label{Eq:ModerateIntro}
\intertext{for some $r>0$, i.{\,}e., we permit general moderate weights
(cf. \cite{Gro3}). For example, for any $r\in \mathbf R$ and $\theta \in (0,1]$,
we allow the weights $(1+|x|)^r$ and $e^{r|x|^\theta}$.
Note that in \cite{FeHuWa,Toft2} the condition \eqref{Eq:ModerateIntro}
is replaced by}
\omega (x+y) &\lesssim \omega (x)\eabs y^r,
\label{Eq:PolModerateIntro}
\end{align}
for some $r\ge 0$, which is more restricted. In particular, weights like
$e^{r|x|^\theta}$ are not allowed in \cite{FeHuWa,Toft2}.

\par

The conditions on the weights have strong impact on the shape of
modulation spaces. For example, conditions of the form
\eqref{Eq:PolModerateIntro} imply that the modulation space
$M^{p,q}_{(\omega )}(\rr d)$ stay between $\mascS (\rr d)$ and
$\mascS '(\rr d)$, while conditions of the form \eqref{Eq:ModerateIntro}
imply that $M^{p,q}_{(\omega )}(\rr d)$ may contain the whole
$\mascS '(\rr d)$, or might be contained in
$\mascS (\rr d)$. In this respect, in contrast to \cite{FeHuWa,Toft2},
our trace mapping results
for modulation spaces in Section \ref{sec2} also include
ultra-distributions which are outside $\mascS '(\rr d)$.

\par

In particular we show that \eqref{Eq:IntroTraceModulation} holds
for such weights and $p,q\in (0,\infty ]$ after
\eqref{Eq:TraceWeightCondIntro} is relaxed into
\begin{equation}\tag*{(\ref{Eq:TraceWeightCondIntro})$'$}
\omega _0(x,\xi )\eabs \eta ^t
\le C \omega (x,y,\xi ,\eta )e^{r|y|},
\qquad
t\ge d_2\left (\max \left ( 1,\frac 1p, \frac 1q \right ) - \frac 1q \right ),
\end{equation}
for some $r\ge 0$, where the latter inequality in
\eqref{Eq:TraceWeightCondIntro}$'$ should be strict when $q>\min (p,1)$.
(See Theorem \ref{Thm:TraceMod}.)

\par

To reach such
general results for modulation spaces, we first use Gabor
expansions to deduce the quasi-norm estimate 
\eqref{Eq:IntroTraceModulation2} for elements
in the Gelfand-Shilov space $\Sigma _1(\rr d)$ (which is dense in
$\mascS(\rr d)$). Here convolution properties for weighted
discrete Lebesgue spaces (see \eqref{Eq:YoungSpec} and
\eqref{Eq:ConvExp} for details) play key roles to achieved the
desired estimates. 
Thereafter we deduce extensions of \eqref{Eq:IntroTraceModulation}
to the whole $M^{p,q}_{(\omega )}(\rr d)$ in the Banach space case,
$p,q\ge 1$ by applying Hahn-Banach's theorem. In similar way as in
\cite{Toft2}, some critical cases when $p=\infty$ are obtained by
using the \emph{narrow convergence}, a weaker form of convergence
compared to norm convergence, but sufficiently strong to guarantee
needed uniqueness properties (see \cite{Sjo,Toft2}).

\par

Extensions
to the general case, $p,q\in (0,\infty ]$ are then obtained by
applying suitable embedding results for modulation spaces,
exploiting the fact that $M^{p,q}_{(\omega )}(\rr d)$ increases
with $p$ and $q$. Finally the uniqueness of \eqref{Eq:IntroTraceModulation}
follows in the case $p,q<\infty$ by using the fact that $\Sigma _1(\rr d)$
is dense in $M^{p,q}_{(\omega )}(\rr d)$. For general $p$ and $q$,
the uniqueness of \eqref{Eq:IntroTraceModulation} is then reached by
embedding $M^{p,q}_{(\omega )}(\rr d)$ into other modulation spaces,
where uniqueness assertions hold. More precisely, we find suitable
weights $\omega _1$ and $\omega _{0,1}$ such that the diagram
\begin{equation}\label{Eq:commdiagram}
\begin{CD}
M^{p,q}_{(\omega )}(\rr d)    @>{\Trm _z}>> M^{p,q}_{(\omega _0)}(\rr {d_1})
\\
@V {I}VV        @VV{I}V\\
M^{1,1}_{(\omega _1)}(\rr d)     @>>{\Trm _z}>
M^{1,1}_{(\omega _{0,1})}(\rr {d_1})
\end{CD}
\end{equation}
commutes. Here $I$ denotes continuous inclusions. The uniqueness
of the first map $\Trm _z$ in \eqref{Eq:commdiagram} then follows
from the uniqueness of the second one in \eqref{Eq:commdiagram}.
Note that these arguments remind on those in \cite[Remark 3.1]{Schn},
where Schneider explain how trace mappings 
on Besov and Triebel-Lizorkin spaces can be extended to allow the
involved Lebesgue exponents to stay in the full interval $(0,\infty ]$.

\par

In contrast to Theorems 3.1 and 3.3 in \cite{FeHuWa}, our results do not include
any trace results for Besov or, more generally, those $\alpha$-modulation
spaces which are not modulation spaces. It is not obvious whether
the methods in Section \ref{sec2} are well-designed for such investigations.
On the other hand, the restrictions that the Lebesgue exponents
in Theorems 3.1 and 3.3 in \cite{FeHuWa} are not allowed to
attain $\infty$ is removed in Theorem \ref{Thm:TraceMod} in Section \ref{sec2}.
This restriction might also be removed by using the ideas in
\cite[Remark 3.1]{Schn} or behind \eqref{Eq:commdiagram}
in combination with embedding results in \cite{Grob,HaWa,ToWa}.
(See Remark \ref{Rem:TraceReaction}.)
%

\medspace

In Section \ref{sec3} we apply our trace mapping results in Section \ref{sec2}
to show that pseudo-differential operators with amplitudes in suitable
modulation spaces can be formulated as pseudo-differential operators of
Kohn-Nirenberg type with symbols in other modulation spaces. For example,
as a consequence of our investigations it follows that if $p,q\in (0,\infty ]$,
$\omega$ and $\omega _0$ are moderate weights such that
$$
\omega (x,x+z,\zeta +\eta ,\xi -\eta ,\eta ,z)
\asymp
\omega _0(x,\zeta ,\xi ,z)\eabs \eta ^t,\qquad t\ge \frac d{q'}
$$
with strict inequality when $q>1$, then
\begin{equation}\label{Eq:PseudoAmpKNIdent}
\op (M^{p,q}_{(\omega )}(\rr {3d})) = \op _0(M^{p,q}_{(\omega _0)}(\rr {2d})).
\end{equation}
(See Theorem \ref{Thm:AmpOpPsOpMod}.)
In particular, for any $a\in M^{p,q}_{(\omega )}(\rr {3d})$, there is a unique
$a_0\in M^{p,q}_{(\omega _0)}(\rr {2d})$ such that $\op (a)=\op _0(a_0)$.
On the other hand, for any $a_0\in M^{p,q}_{(\omega _0)}(\rr {2d})$,
there exists an $a\in M^{p,q}_{(\omega )}(\rr {3d})$ such that
$\op (a)=\op _0(a_0)$. (See Proposition \ref{Prop:AmpOpPsOpMod}.)
Hence, the map which takes
$a\in M^{p,q}_{(\omega )}(\rr {3d})$ into
$a_0\in M^{p,q}_{(\omega _0)}(\rr {2d})$ (which is uniquely defined)
is surjective.

\par

In order to deduce the surjectivity, a natural idea might be to choose
$a(x,y,\zeta )=a_0(x,\zeta )$ as candidate for $a$ above. On the other
hand, if $p<\infty$ and $a_0\in M^{p,q}_{(\omega _0)}(\rr {2d})$, then
$(x,y,\zeta )\mapsto a_0(x,\zeta )$ fails to belong to
$M^{p,q}_{(\omega )}(\rr {3d})$. Hence the choice
$a(x,y,\zeta )=a_0(x,\zeta )$ does not work in this case. Instead the choice
$$
a(x,y,\zeta ) = e^{-i\scal {D_\zeta}{D_y}}(a_0(x,\zeta )\fy (y-x))
$$
works when $\fy$ is a suitable function such that $\fy (0)=1$. 
(Cf. Remark \ref{Rem:TraceMod}, and Theorem \ref{Thm:AmpOpPsOpMod}
and its proof.)

\par

The identity \eqref{Eq:PseudoAmpKNIdent} (supplied by Theorem 
\ref{Thm:AmpOpPsOpMod}) leads to that any continuity or compactness
property for $\op _0(M^{p,q}_{(\omega _0)}(\rr {2d}))$
carry over to the class $\op (\maclM ^{p,q}_{(\omega )}(\rr {3d}))$.
Here $\maclM ^{p,q}_{(\omega )}(\rr {3d})$ is a modification
of $M^{p,q}(\rr {3d})$, obtained by suitable linear pullbacks
of the involved elements (see Section \ref{sec3} for strict definition). In
Theorem \ref{Thm:AmpOpModCont}$'$ in Section \ref{sec3} we use
\cite[Theorem 3.1]{Toft19} to achieve such continuity
and compactness properties
on modulation spaces, which are more general compared to
existing results in the literature (e.{\,}g. in \cite{Sjo,Toft2}).
For example, the following result is
an immediate consequence of Theorem \ref{Thm:AmpOpModCont}$'$
in Section \ref{sec3}, and is obtained by choosing
$p=\infty$ and $q\in (0,1]$ in that result. Here a common condition
for the involved weight function is
\begin{equation}\label{Eq:WeightPseudoCond}
\frac {\omega _2(x,\xi )}{\omega _1(z,\zeta )}
\lesssim
\omega (x,z,\zeta +\eta ,\xi -\zeta -\eta ,\eta ,z-x),
\qquad x,z,\xi ,\eta ,\zeta  \in \rr d.
\end{equation}

\par

%
%
%
%

\begin{thm}\label{Thm:AmpOpModCont}
Let $p\in (0,\infty ]$, $q\in (0,1]$,
$\omega \in \mascP _E(\rr {6d})$ and
$\omega _1,\omega _2\in \mascP _E(\rr {2d})$
be such that \eqref{Eq:WeightPseudoCond} holds.
If $a\in \maclM ^{\infty ,q}_{(\omega )}(\rr {3d})$,
then $\op (a)$ is continuous
map from $M^{p,q}_{(\omega _1)}(\rr d)$ to $M^{p,q}_{(\omega _2)}(\rr d)$.
\end{thm}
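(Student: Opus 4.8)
The plan is to reduce Theorem \ref{Thm:AmpOpModCont} to the identification result \eqref{Eq:PseudoAmpKNIdent} furnished by Theorem \ref{Thm:AmpOpPsOpMod}, thereby transferring the continuity question from amplitude operators to Kohn-Nirenberg operators, where the needed boundedness is available from the cited pseudo-differential calculus in \cite{Toft19}. First I would recall that $\maclM ^{\infty ,q}_{(\omega )}(\rr {3d})$ is, by definition, a linear pullback of the ordinary modulation space $M^{\infty ,q}$, so that membership $a\in \maclM ^{\infty ,q}_{(\omega )}(\rr {3d})$ is equivalent to the statement that the pulled-back amplitude lies in a genuine $M^{\infty ,q}_{(\widetilde \omega )}(\rr {3d})$ for an appropriately transformed weight $\widetilde \omega$. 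The precise shape of that transformed weight is exactly what appears on the right-hand side of \eqref{Eq:WeightPseudoCond}, evaluated at the phase-space variables adapted to the amplitude representation.

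The core step is then to apply Theorem \ref{Thm:AmpOpPsOpMod} with $p=\infty$: since \eqref{Eq:WeightPseudoCond} encodes exactly the weight relation required there (with the strictness condition automatically met because here $q\le 1$, so the inequality $q>1$ never triggers), we obtain a unique symbol $a_0\in M^{\infty ,q}_{(\omega _0)}(\rr {2d})$ with $\op (a)=\op _0(a_0)$, where $\omega _0$ is the two-dimensional weight produced by the trace construction. Concretely I would verify that the weight $\omega _0$ arising from Theorem \ref{Thm:AmpOpPsOpMod} dominates the quotient $\omega _2(x,\xi )/\omega _1(z,\zeta )$ in the manner demanded by the $\op _0$ continuity theorem; this is where \eqref{Eq:WeightPseudoCond} does its work, and tracking the affine change of variables $(x,z,\zeta +\eta ,\xi -\zeta -\eta ,\eta ,z-x)$ back to the standard symbol variables is the computational heart of the matter.

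Once the representation $\op (a)=\op _0(a_0)$ with $a_0\in M^{\infty ,q}_{(\omega _0)}$ is secured, I would invoke the continuity result for Kohn-Nirenberg operators (the $q\in (0,1]$ instance of Theorem \ref{Thm:AmpOpModCont}$'$, itself drawn from \cite[Theorem 3.1]{Toft19}) to conclude that $\op _0(a_0)$, and hence $\op (a)$, is continuous from $M^{p,q}_{(\omega _1)}(\rr d)$ into $M^{p,q}_{(\omega _2)}(\rr d)$. The theorem then follows immediately, since $\op (a)$ and $\op _0(a_0)$ are literally the same operator.

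The main obstacle I anticipate is bookkeeping of the weight functions across the composition of pullbacks. The weight $\omega$ lives on $\rr {6d}$, the intermediate symbol weight $\omega _0$ on a space of lower dimension, and the target spaces carry $\omega _1,\omega _2$ on $\rr {2d}$; making the chain of moderateness inequalities compose correctly — so that \eqref{Eq:WeightPseudoCond} precisely matches the hypothesis format of Theorem \ref{Thm:AmpOpPsOpMod} after the variable substitution — requires care, and any sign or index slip in the affine arguments would break the estimate. The functional-analytic content, by contrast, is light: no density or narrow-convergence argument is needed here, because $p=\infty$ and $q\le 1$ have already been handled inside the proofs of the results being cited.
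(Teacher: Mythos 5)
Your proposal follows essentially the same route as the paper: the paper derives Theorem \ref{Thm:AmpOpModCont} as the $p=\infty$, $q\in(0,1]$ instance of Theorem \ref{Thm:AmpOpModCont}$'$, which is itself obtained by combining the identification result of Theorem \ref{Thm:AmpOpPsOpMod} with the Kohn--Nirenberg continuity theorem (Theorem \ref{Thm:OpCont}, i.e.\ \cite[Theorem 3.1]{Toft19}), and your observation that for $q\le 1$ one may take $r=\infty$ and $\vartheta\equiv 1$ so that \eqref{Eq:WeightPseudoCond}$'$ collapses to \eqref{Eq:WeightPseudoCond} is exactly the reduction the paper records. The weight bookkeeping you flag is indeed the only computational content, and it is handled in the paper by the chain of Lemmas \ref{Lemma:IdentTranslMod} and \ref{Lemma:ExpOpModAmpl} inside the proof of Theorem \ref{Thm:AmpOpPsOpMod}.
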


\par

In the case $q=1$, $\omega =1$ and $\omega _j=1$, $j=1,2$, Theorem
\ref{Thm:AmpOpModCont} is proved already in \cite{Sjo}.
For weights of polynomial type and Lebesgue exponents are obeyed
to stay in the restricted subinterval $[1,\infty ]$ of $(0,\infty ]$,
Theorem \ref{Thm:AmpOpModCont}$'$ is essentially proved in
\cite{Toft2}.

\par

Section \ref{sec3} also includes some investigations on
pseudo-differential operators of amplitude types with
symbols in modulation spaces of Wiener amalgam types.
For such operators we deduce continuity between
suitable modulation spaces and Wiener amalgam spaces
(see Theorem \ref{Thm:PseudoSymbWienerAm}).

\par

In Section \ref{sec3} we also combine Theorem \ref{Thm:AmpOpPsOpMod}
with suitable results in \cite{Toft19,Toft20} to obtain detailed compactness
results for pseudo-differential operators of amplitude types.
Especially the following Schatten-von Neumann and nuclearity results
are special cases of Theorem \ref{Thm:SchattenPseudo}$'$ and
Theorem \ref{Thm:NuclPseudo}$'$ in Section \ref{sec3}. (See
\cite[Theorem 3.4]{Toft19} and \cite[Theorem 4.2]{Toft20} for
related results.)

\par

\begin{thm}\label{Thm:SchattenPseudo}
Let $p,q\in (0,\infty ]$ be such that $q\le \min (p,p')$,
$\omega \in \mascP _E(\rr {6d})$ and
$\omega _1,\omega _2\in \mascP _E(\rr {2d})$ be such that
\eqref{Eq:WeightPseudoCond} holds.
If $a\in \maclM ^{p,q}_{(\omega )}(\rr {3d})$, then
$\op (a)\in \mascI _p(M^2_{(\omega _1)}(\rr d),M^2_{(\omega _2)}(\rr d))$,
and
$$
\nm {\op (a)}{\mascI _p(M^2_{(\omega _1)},M^2_{(\omega _2)})}
\lesssim \nm a {\maclM ^{p,q}_{(\omega )}},\quad
a\in \maclM ^{p,q}_{(\omega )}(\rr {3d}).
$$
\end{thm}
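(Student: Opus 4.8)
The plan is to reduce the Schatten-von Neumann estimate for the amplitude operator $\op(a)$ to a known Schatten estimate for Kohn-Nirenberg operators $\op_0(a_0)$ by exploiting the identification $\op(a)=\op_0(a_0)$ together with the trace mapping results of Section \ref{sec2}. Concretely, I would first invoke Theorem \ref{Thm:AmpOpPsOpMod} (and the surrounding discussion culminating in \eqref{Eq:PseudoAmpKNIdent}) to produce, for a given $a\in\maclM^{p,q}_{(\omega)}(\rr{3d})$, a uniquely determined symbol $a_0\in M^{p,q}_{(\omega_0)}(\rr{2d})$ with $\op(a)=\op_0(a_0)$, where $\omega_0$ is the weight obtained by the trace/contraction procedure applied to $\omega$. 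The crucial quantitative input is that this correspondence is \emph{continuous}: the trace mapping estimate \eqref{Eq:IntroTraceModulation2}, upgraded in Section \ref{sec2} to the full range $p,q\in(0,\infty]$ and to $E$-moderate weights, should yield
$$
\nm{a_0}{M^{p,q}_{(\omega_0)}}\lesssim \nm{a}{\maclM^{p,q}_{(\omega)}}.
$$
This inequality is the bridge between the amplitude side and the symbol side.

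Next I would bring in the known Schatten-von Neumann result for Kohn-Nirenberg operators with modulation-space symbols. Under the hypothesis $q\le\min(p,p')$, a symbol $a_0\in M^{p,q}_{(\omega_0)}(\rr{2d})$ with the appropriate weight relation gives $\op_0(a_0)\in\mascI_p(M^2_{(\omega_1)},M^2_{(\omega_2)})$ together with the estimate
$$
\nm{\op_0(a_0)}{\mascI_p(M^2_{(\omega_1)},M^2_{(\omega_2)})}\lesssim \nm{a_0}{M^{p,q}_{(\omega_0)}};
$$
this is precisely the type of statement recorded in \cite[Theorem 3.4]{Toft19} (and in the Banach range in \cite{Sjo,Toft2}). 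Chaining the two displayed inequalities and using $\op(a)=\op_0(a_0)$ then gives the asserted bound
$$
\nm{\op(a)}{\mascI_p(M^2_{(\omega_1)},M^2_{(\omega_2)})}\lesssim\nm{a}{\maclM^{p,q}_{(\omega)}}.
$$
For the membership in $\mascI_p$ itself (as opposed to the norm bound) I would first verify it on the dense subspace of amplitudes coming from $\Sigma_1$, where everything is smooth and the composite map is manifestly well defined, and then pass to general $a$ by the norm estimate just established, using completeness of $\mascI_p$ and density.

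The main obstacle I anticipate is bookkeeping of the weights: I must check that the weight $\omega_0$ produced by the trace contraction of $\omega$ (in the variables indicated by \eqref{Eq:WeightPseudoCond}) is exactly the weight for which the Kohn-Nirenberg Schatten result applies with target spaces $M^2_{(\omega_1)}$ and $M^2_{(\omega_2)}$. This amounts to verifying that the hypothesis \eqref{Eq:WeightPseudoCond}, after the change of variables implicit in passing from $a$ to $a_0$, translates into the $E$-moderateness and the quotient condition $\omega_2/\omega_1\lesssim\omega_0$ required on the symbol side; the strictness/threshold bookkeeping in \eqref{Eq:TraceWeightCondIntro}$'$ must be tracked, but since here the target is the $L^2$-based space $M^2$ the exponent condition $t\ge d_2(\max(1,1/p,1/q)-1/q)$ collapses to the mild form and should cause no trouble. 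A secondary technical point is ensuring the density argument is compatible with the quasi-Banach setting $p,q\in(0,\infty]$, so that the approximation of $a$ by $\Sigma_1$-amplitudes and the continuity of $a\mapsto a_0$ genuinely transfer the $\mascI_p$-membership; this is where the full-range version of the trace theorem from Section \ref{sec2}, rather than its Banach-space predecessor, is essential.
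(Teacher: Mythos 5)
Your proposal follows essentially the same route as the paper: Theorem \ref{Thm:SchattenPseudo} is presented there as a special case of Theorem \ref{Thm:SchattenPseudo}$'$, which is obtained exactly as you describe, namely by combining the amplitude-to-Kohn--Nirenberg identification of Theorem \ref{Thm:AmpOpPsOpMod} (with its attendant bound $\nm{a_0}{M^{p_0,q_0}_{(\omega _0)}}\lesssim \nm a{\maclM ^{p,q}_{(\omega )}}$ coming from the trace results of Section \ref{sec2}) with the Schatten--von Neumann result \cite[Theorem 3.4]{Toft19} for operators with symbols in modulation spaces. Your remarks on the weight bookkeeping and on density in the quasi-Banach range correspond to the details the paper explicitly leaves to the reader.
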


\par

\begin{thm}\label{Thm:NuclPseudo}
Let $p\in (0,1]$,
$\omega \in \mascP _E(\rr {6d})$ and
$\omega _1,\omega _2\in \mascP _E(\rr {2d})$ be such that
\eqref{Eq:WeightPseudoCond} holds.
If $a\in \maclM ^{p,p}_{(\omega )}(\rr {3d})$, then
$\op (a)\in \mascN _p(M^\infty _{(\omega _1)}(\rr d),M^p_{(\omega _2)}(\rr d))$,
and
$$
\nm {\op (a)}{\mascN _p(M^\infty _{(\omega _1)},M^p_{(\omega _2)})}
\lesssim \nm a {\maclM ^{p,p}_{(\omega )}},\quad
a\in \maclM ^{p,p}_{(\omega )}(\rr {3d}).
$$
\end{thm}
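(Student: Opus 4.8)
The plan is to prove Theorem \ref{Thm:NuclPseudo} by transporting it to a statement about Kohn-Nirenberg operators, for which the required nuclearity is already available in \cite{Toft20}. The two ingredients are the identification \eqref{Eq:PseudoAmpKNIdent} of amplitude operators with Kohn-Nirenberg operators (Theorem \ref{Thm:AmpOpPsOpMod}) and the $p$-nuclearity of $\op _0(b)$ when the symbol $b$ lies in $M^{p,p}$. I would first reduce to the Kohn-Nirenberg picture, then quote the nuclearity estimate, and finally chain the two quasi-norm bounds.

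\textbf{Step 1 (reduction to Kohn-Nirenberg form).} Since $q=p\in (0,1]$, the exponent threshold $t\ge d/q'$ in the trace-weight relation behind \eqref{Eq:PseudoAmpKNIdent} is $\le 0$ and hence harmless, while the accompanying strict-inequality requirement (active only for $q>\min (p,1)$) is void. Thus the identification of Theorem \ref{Thm:AmpOpPsOpMod}, together with the definition of $\maclM ^{p,p}_{(\omega )}(\rr {3d})$ as a linear pullback of $M^{p,p}(\rr {3d})$, assigns to each $a\in \maclM ^{p,p}_{(\omega )}(\rr {3d})$ a unique Kohn-Nirenberg symbol $a_0\in M^{p,p}_{(\omega _0)}(\rr {2d})$ with $\op (a)=\op _0(a_0)$ and
\[
\nm {a_0}{M^{p,p}_{(\omega _0)}}\lesssim \nm a{\maclM ^{p,p}_{(\omega )}},
\]
where $\omega _0\in \mascP _E(\rr {4d})$ is the weight attached to $\omega$ by that same relation.

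\textbf{Step 2 (nuclearity of the Kohn-Nirenberg operator).} For $p\in (0,1]$ and $b\in M^{p,p}_{(\omega _0)}(\rr {2d})$, the nuclearity result of \cite[Theorem 4.2]{Toft20} gives $\op _0(b)\in \mascN _p(M^\infty _{(\omega _1)}(\rd),M^p_{(\omega _2)}(\rd))$ with $\nm {\op _0(b)}{\mascN _p}\lesssim \nm b{M^{p,p}_{(\omega _0)}}$, once $\omega _0,\omega _1,\omega _2$ obey the weight condition demanded there. Applying this with $b=a_0$ and recalling $\op (a)=\op _0(a_0)$ yields the nuclearity of $\op (a)$ as a map $M^\infty _{(\omega _1)}(\rd)\to M^p_{(\omega _2)}(\rd)$ together with
\[
\nm {\op (a)}{\mascN _p}\lesssim \nm {a_0}{M^{p,p}_{(\omega _0)}}\lesssim \nm a{\maclM ^{p,p}_{(\omega )}},
\]
where $\mascN _p=\mascN _p(M^\infty _{(\omega _1)},M^p_{(\omega _2)})$; this is the asserted estimate.

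\textbf{Main obstacle.} The delicate point is the matching of weights: I must verify that \eqref{Eq:WeightPseudoCond}, combined with the explicit trace-map relation linking $\omega _0$ to $\omega$ (the same one that produces \eqref{Eq:PseudoAmpKNIdent}), furnishes \emph{exactly} the weight inequality required by \cite[Theorem 4.2]{Toft20}. Concretely, one substitutes the linear change of variables defining $\maclM ^{p,p}_{(\omega )}$ together with the shift hidden in the identification and checks that the resulting estimate on $\omega _0$ survives up to the moderateness factor $e^{r|\cdot |}$ admitted in $\mascP _E$. Once this is settled, the quasi-Banach ($p$-Banach) ideal structure of $\mascN _p$---in particular the $p$-triangle inequality and completeness---legitimizes summing the Gabor contributions underlying \cite{Toft20}, and one should confirm that the nuclear decomposition there is taken with respect to the $M^\infty _{(\omega _1)}$--$M^p_{(\omega _2)}$ pairing, so that no further embedding is needed. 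In contrast to the Banach situation treated in \cite{Toft2,Toft19}, here $p\le 1$, so all summability is governed by the $\ell ^p$ (rather than $\ell ^1$) quasi-norm of the Gabor coefficients, which is precisely what $M^{p,p}$ delivers.
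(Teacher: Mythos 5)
Your proposal matches the paper's own argument: the paper obtains Theorem \ref{Thm:NuclPseudo} as a special case of Theorem \ref{Thm:NuclPseudo}$'$, which it derives exactly by combining the amplitude-to-Kohn--Nirenberg identification of Theorem \ref{Thm:AmpOpPsOpMod} (with $r=\infty$ and $\vartheta \equiv 1$, admissible since $q=p\le 1$ makes the exponent condition trivial) with the nuclearity result of \cite[Theorem 4.2]{Toft20}. The weight-matching verification you single out as the main obstacle is precisely the detail the paper leaves to the reader.
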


\par

The paper is organizes as follows. In Section \ref{sec1} we
recall some basic facts for Gelfand-Shilov, modulation spaces
and pseudo-differential operators. Thereafter we deduce trace
results for modulation spaces in Section \ref{sec2}. Finally
we apply these trace results in Section \ref{sec3} to transform
common continuity and compactness properties
for pseudo-differential operators of standard or Kohn-Nirenberg
types into related properties for pseudo-differential operators of
amplitude types.

%
%

\par

\section*{Acknowledgement}

\par

The first author was supported by Vetenskapsr{\aa}det
(Swedish Science Council), within the project 2019-04890.
The second author is thankful to
(DST/INSPIRE/04/2016/001507)
and the third author is thankful to
(DST/INSPIRE/04/2019/001914) for research grants.

\par

\section{Preliminaries}\label{sec1}

\par

In this section we recall some facts on Gelfand-Shilov spaces,
modulation spaces and pseudo-differential operators. After
introducing classes of weight functions and mixed norm spaces
of Lebesgue types, we recall some properties of the Gelfand-Shilov
space $\Sigma _1(\rr d)$ and its distribution space $\Sigma _1'(\rr d)$.
Thereafter we consider a class of modulation spaces which contains
the classical modulation spaces, introduced by Feichtinger in \cite{Fei1},
but are not that general as in the more general approach, given by
Feichtinger in \cite{Fei6}. In the last part we recall the definition of
pseudo-differential operators and present some basic facts.

\par

\subsection{Weight functions}\label{subsec1.1}
A \emph{weight} on $\rr d$ is a positive function
$\omega \in  L^\infty _{loc}(\rr d)$
such that $1/\omega \in  L^\infty _{loc}(\rr d)$. The weight $\omega$ on $\rr d$
is called \emph{moderate} if there is a positive locally bounded function
$v$ on $\rr d$ such that
\begin{equation}\label{eq:2}
\omega(x+y)\le C\omega(x)v(y),\quad x,y\in\rr{d},
\end{equation}
for some constant $C\ge 1$. If $\omega$ and $v$ are weights on $\rr d$ such
that \eqref{eq:2} holds, then $\omega$ is also called \emph{$v$-moderate}.
The set of all moderate weights on $\rr d$ is denoted by $\mascP _E(\rr d)$.

\par

We let $\mascP (\rr d)$ be the set of all weights of polynomial type. That is,
\eqref{eq:2} holds true for some (positive) polynomial $v$ on $\rr d$. For
$s\ge 1$ we also let
$\mascP _{E,s}(\rr d)$ ($\mascP _{E,s}^0(\rr d)$) be the set of all weights
on $\rr d$ such that for some $r>0$ (every $r>0$) there is a constant
$C>0$ such that
\begin{equation}\tag*{(\ref{eq:2})$'$}
\omega(x+y)\le C\omega(x)e^{r|y|^{\frac 1s}},\quad x,y\in\rr{d},
\end{equation}

\par

The weight $v$ on $\rr d$ is called \emph{submultiplicative},
if it is even and \eqref{eq:2}
holds for $\omega =v$. From now on, $v$ always denotes
a submultiplicative
weight if nothing else is stated. In particular,
if \eqref{eq:2} holds and $v$ is submultiplicative, then it follows
by straight-forward computations that
\begin{equation}\label{eq:2Next}
\begin{gathered}
\frac {\omega (x)}{v(y)} \lesssim \omega(x+y) \lesssim \omega(x)v(y),
\\[1ex]
\quad
v(x+y) \lesssim v(x)v(y)
\quad \text{and}\quad v(x)=v(-x),
\quad x,y\in\rr{d}.
\end{gathered}
\end{equation}
Here and in what follows we write
$A(\theta )\lesssim B(\theta )$, $\theta \in \Omega$,
if there is a constant $c>0$ such that $A(\theta )\le cB(\theta )$
for all $\theta \in \Omega$. We also set $A(\theta )\asymp B(\theta )$
when $A(\theta )\lesssim B(\theta )\lesssim A(\theta )$.

\par

If $\omega$ is a moderate weight on $\rr d$, then by \cite{Toft2,Toft10}
and above, there is a submultiplicative weight
$v$ on $\rr d$ such that \eqref{eq:2} and \eqref{eq:2Next}
hold. Moreover if $v$ is submultiplicative on $\rr d$, then
\begin{equation}\label{Eq:CondSubWeights}
1\lesssim v(x) \lesssim e^{r|x|}
\end{equation}
for some constant $r>0$ (cf. \cite{Gro3}). In particular, if
$\omega$ is moderate, then
\begin{equation}\label{Eq:ModWeightProp}
\omega (x+y)\lesssim \omega (x)e^{r|y|}
\quad \text{and}\quad
e^{-r|x|}\le \omega (x)\lesssim e^{r|x|},\quad
x,y\in \rr d
\end{equation}
for some $r>0$.

\par

\subsection{Mixed quasi-normed spaces of Lebesgue types}

\par

Let $p,q\in (0,\infty ]$, and let $\omega \in \mascP _E(\rr {2d})$.
Then $L^{p,q}_{(\omega )}(\rr {2d})$ and
$L^{p,q}_{*,(\omega )}(\rr {2d})$ consist
of all measurable functions $F$ on $\rr {2d}$ such that
\begin{alignat*}{3}
\nm {g_1}{L^q(\rr d)}&<\infty ,&
\quad &\text{where} &\quad
g_1(\xi ) &\equiv \nm {F(\cdo ,\xi )\omega (\cdo ,\xi )}{L^p(\rr d)}
\intertext{and}
\nm {g_2}{L^p(\rr d)}&<\infty ,&
\quad &\text{where} &\quad
g_2(x) &\equiv \nm {F(x,\cdo )\omega (x,\cdo )}{L^q(\rr d)},
\end{alignat*}
respectively.

\par

More generally, as in \cite{Toft19} we consider
general classes of mixed quasi-normed spaces of Lebesgue
types, parameterized by
$$
p =(p_1,\dots , p_d)\in (0,\infty ]^d,
\quad 
q =(q_1,\dots , q_d)\in (0,\infty ]^d,
$$
$\sigma \in \operatorname {S}_d$ and $\omega \in \mascP _E(\rr d)$.
Here $\operatorname {S}_d$ is the set of permutations
on $\{ 1,\dots ,d\}$. In fact, let $p \in
(0,\infty ]^d$, $\omega \in \mascP _E(\rr d)$, and let $\sigma
\in \operatorname {S}_d$. Moreover, let $\Omega _j\subseteq
\mathbf R$ be Borel-sets, $\mu _j$ be positive Borel
measures on $\Omega _j$, $j=1,\dots ,d$, and let
$\Omega =\Omega _1\times \cdots \times \Omega _d$
and $\mu = \mu _1\otimes \cdots \otimes \mu _d$.
For every measurable and complex-valued function $f$ on
$\Omega$, let
$g_{j,\omega ,\mu}$, $j=1,\dots ,d-1$, be defined inductively by
\begin{align*}
g_{0,\omega ,\mu}(x_1,\dots ,x_d)
&\equiv |f (x_{\sigma ^{-1}(1)},\dots ,x_{\sigma ^{-1}(d)})
\omega (x_{\sigma ^{-1}(1)},\dots ,x_{\sigma ^{-1}(d)})|,
\\[1ex]
g_{k,\omega ,\mu}(x_{k+1},\dots ,x_d) &\equiv \nm {g_{k-1,\omega ,\mu}(\cdo ,
x_{k+1},\dots ,x_d) }
{L^{p_k}(\mu _k)},
\quad k=1,\dots ,d-1 ,
\intertext{and let }
\nm f{L^{p}_{\sigma ,(\omega )}(\mu)} &\equiv
\nm {g_{d-1,\omega ,\mu}}{L^{p_d}(\mu _d)}.
\end{align*}
The mixed quasi-norm space $L^{p}_{\sigma ,(\omega )}(\mu)$ of
Lebesgue type is defined as the set of all $\mu$-measurable functions
$f$ such that $\nm f{L^{p }_{\sigma ,(\omega )}(\mu )}<\infty$.

\par

In the sequel we have $\Omega =\rr d$ and $d\mu = dx$, or
$\Omega =\Lambda$ and $\mu (j)=1$ when $j \in \Lambda$, where
\begin{equation}\label{LambdaDef}
\begin{aligned}
\Lambda &= \Lambda _{[\theta ]} = T_\theta \zz d \equiv
\sets {(\theta _1j_1,\dots ,\theta _dj_d)}{(j_1,\dots ,j_d)\in \zz d} ,
\\[1ex]
\theta &=(\theta _1,\dots ,\theta _d)\in \rrstar d,
\qquad \mathbf R_*=\mathbf R\setminus 0,
\end{aligned}
\end{equation}
and $T_\theta$ denotes the diagonal matrix with diagonal elements
$\theta _1,\dots ,\theta _d$. In the former case we set
$L^{p}_{\sigma ,(\omega )}(\mu)=L^{p}_{\sigma ,(\omega )}=
L^{p}_{\sigma ,(\omega )}(\rr d)$, and in the latter
case we set $L^{p}_{\sigma ,(\omega )}(\mu) =
\ell ^{p}_{\sigma ,(\omega )}(\Lambda )$.

\par

For convenience we set
$$
L^{p,q}_{(\omega )} = L^{p,q}_{\sigma _1,(\omega )}
\qquad \text{and}\qquad 
L^{p,q}_{*,(\omega )} = L^{q,p}_{\sigma _2,(\omega )}
$$
when $p,q\in (0,\infty ]^d$, $\omega \in \mascP _E(\rr {2d})$,
$\sigma _1\in \operatorname {S}_{2d}$ is the identity
map and $\sigma _2 \in \operatorname {S}_{2d}$ is given by
\begin{equation}\label{Eq:sigma2Def}
\sigma _2(j)=j+d
\quad \text{and}\quad
\sigma _2(j+d)=j,
\qquad j=1,\dots ,d.
\end{equation}

\par

Later on it is common that $p$ above is split in some ways. If
$d_j\in \mathbf N$, $j=1,\dots ,n$,
$$
p_j = (p_{j,1},\dots ,p_{j,d_j})\in (0,\infty ]^{d_j},\ j\in \{ 1,\dots ,n\} ,
$$
and
$$
p=(p_1,\dots ,p_n) = (p_{1,1},\dots ,
p_{1,d_1},\dots , p_{n,1}, \dots ,p_{n,d_n}),
$$
then set
\begin{align}
L^{p_1,\dots ,p_n}_{(\omega )}(\rr {d_1+\cdots +d_n}) &=
L^p_{(\omega )}(\rr {d_1+\cdots +d_n})
\label{Eq:MixedLebSpace}
\intertext{and}
\nm f{L^{p_1,\dots ,p_n}_{(\omega )}}
&=
\nm f{L^{p_1,\dots ,p_n}_{(\omega )}(\rr {d_1+\cdots +d_n})}
=
\nm f{L^p_{(\omega )}},
\intertext{when $f$ is complex-valued and measurable
on $\rr {d_1+\cdots +d_n}$. If in addition
$d_1=\cdots =d_n=d$ for some $d\ge 1$, then the space
in \eqref{Eq:MixedLebSpace} becomes}
L^p_{(\omega )}(\rr {nd}) &= L^{p_1,\dots ,p_n}_{(\omega )}(\rr {nd})
\tag*{(\ref{Eq:MixedLebSpace})$'$}
\end{align}
with $p_j\in (0,\infty ]^d$ for every $j=1,\dots ,n$.

\par


\par

\subsection{Gelfand-Shilov spaces}\label{subsec1.2}

\par

Let $0<h,s\in \mathbf R$ be fixed. Then $\mathcal S_{s;h}(\rr d)$ consists of
all $f\in C^\infty (\rr d)$ such that
\begin{equation*}
\nm f{\mathcal S_{s;h}}\equiv \sup \frac {|x^\beta \partial ^\alpha
f(x)|}{h^{|\alpha | + |\beta |}\alpha !^s\, \beta !^s}
\end{equation*}
is finite. Here the supremum is taken over all $\alpha ,\beta \in
\mathbf N^d$ and $x\in \rr d$.

\par

Obviously $\mathcal S_{s;h}\hookrightarrow
\mathscr S$ is a Banach space which increases with $h$ and $s$. Here and
in what follows we use the notation $A\hookrightarrow B$ when the topological
spaces $A$ and $B$ satisfy $A\subseteq B$ with continuous inclusion.

\par

The \emph{Gelfand-Shilov spaces} $\mathcal S_{s}(\rr d)$ and
$\Sigma _{s}(\rr d)$ are the inductive and projective limits respectively
of $\mathcal S_{s;h}(\rr d)$ with respect to $h>0$. This implies that
\begin{equation}\label{GSspacecond1}
\mathcal S_{s}(\rr d) = \bigcup _{h>0}\mathcal S_{s;h}(\rr d)
\quad \text{and}\quad \Sigma _{s}(\rr d) =\bigcap _{h>0}\mathcal S_{s;h}(\rr d),
\end{equation}
and that the topology for $\mathcal S_{s}(\rr d)$ is the
strongest possible one such
that the inclusion map from $\mathcal S_{s;h}(\rr d)$ to $\mathcal S_{s}(\rr d)$
is continuous, for every choice of $h>0$. The space $\Sigma _s(\rr d)$ is a
Fr{\'e}chet space with semi norms
$\nm \cdo{\mathcal S_{s;h}}$, $h>0$. Moreover, $\Sigma _s(\rr d)\neq \{ 0\}$,
if and only if $s>1/2$, and $\maclS _s(\rr d)\neq \{ 0\}$
if and only if $s\ge 1/2$
(cf. \cite{GeSh,Pil}).

\medspace

Let $\maclS _{s;h}'(\rr d)$ be the dual of $\maclS _{s;h}(\rr d)$.
Then the \emph{Gelfand-Shilov distribution spaces} $\mathcal S_{s}'(\rr d)$
and $\Sigma _s'(\rr d)$ are the projective and inductive limit
respectively of $\mathcal S_{s;h}'(\rr d)$ with respect to $h>0$.  This means that
\begin{equation}\tag*{(\ref{GSspacecond1})$'$}
\mathcal S_s'(\rr d) = \bigcap _{h>0}\mathcal S_{s;h}'(\rr d)\quad
\text{and}\quad \Sigma _s'(\rr d) =\bigcup _{h>0} \mathcal S_{s;h}'(\rr d).
\end{equation}
We remark that $\mathcal S_s'(\rr d)$
is the (strong) dual of $\mathcal S_s(\rr d)$ when $s\ge \frac 12$,
and $\Sigma _s'(\rr d)$ is the (strong) dual of $\Sigma _s(\rr d)$
when $s>\frac 12$ (cf. \cite{Pil}).
We also remark that the form
$(\cdo ,\cdo )_{L^2}=(\cdo ,\cdo )_{L^2(\rr d)}$
restricted to $\maclS _s(\rr d)\times \maclS _s(\rr d)$
($\Sigma _s(\rr d)\times \Sigma _s(\rr d)$) is uniquely extendable to
a continuous map from  $\maclS _s'(\rr d)\times \maclS _s(\rr d)$
($\Sigma _s'(\rr d)\times \Sigma _s(\rr d)$) to $\mathbf C$.

\par

We have
\begin{multline}\label{GSembeddings}
\maclS _{1/2}(\rr d) \hookrightarrow \Sigma _{s_1} (\rr d) \hookrightarrow 
\maclS _{s_1}(\rr d) \hookrightarrow \Sigma _{s_2}(\rr d) \hookrightarrow
\mascS (\rr d)
\\[1ex]
\hookrightarrow \mascS '(\rr d)
\hookrightarrow
\Sigma _{s_2}' (\rr d) \hookrightarrow  \maclS _{s_1}'(\rr d)
\hookrightarrow  \Sigma _{s_1}'(\rr d) \hookrightarrow \maclS _{1/2}'(\rr d),
\qquad \frac 12 < s_1<s_2,
\end{multline}
with dense embeddings.

\par

The Gelfand-Shilov spaces are invariant under several basic transformations.
For example they are invariant under translations, dilations
and under (partial) Fourier transformations. We also note that the map
$(f_1,f_2)\mapsto f_1\otimes f_2$ is continuous from $\maclS _s(\rr {d_1})
\times \maclS _s(\rr {d_2})$ to $\maclS _s(\rr {d_1+d_2})$, and similarly when
each $\maclS _s$ are replaced by $\Sigma _s$, $\maclS _s'$ or by
$\Sigma _s'$. (See also \cite{Toft22}.)

\par

We let $\mathscr F$ be the Fourier transform which takes the form
$$
(\mathscr Ff)(\xi )= \widehat f(\xi ) \equiv (2\pi )^{-\frac d2}\int _{\rr
{d}} f(x)e^{-i\scal  x\xi }\, dx
$$
when $f\in L^1(\rr d)$. Here $\scal \cdo \cdo$ denotes the usual scalar product
on $\rr d$. The map $\mathscr F$ extends 
uniquely to homeomorphisms on $\mathscr S'(\rr d)$, $\mathcal S_s'(\rr d)$
and $\Sigma _s'(\rr d)$, and restricts to 
homeomorphisms on $\mathscr S(\rr d)$,
$\mathcal S_s(\rr d)$ and $\Sigma _s(\rr d)$, 
and to a unitary operator on $L^2(\rr d)$.

\par

There are several characterizations of Gelfand-Shilov spaces and their 
distribution spaces (cf. \cite{ChuChuKim,Eij,Toft18} and the references therein).
For example, it follows from \cite{ChuChuKim,Eij} that $f\in \maclS _s (\rr d)$
($f\in \Sigma _s (\rr d)$), if and only if
\begin{equation}\label{GSexpcond}
|f(x)|\lesssim e^{-r|x|^{\frac 1s}}\quad \text{and}\quad |\widehat f (\xi )|
\lesssim
e^{-r |\xi |^{\frac 1s}} 
\end{equation}
is true for some $r>0$ (for every $r>0$).

\par

Gelfand-Shilov spaces and their distribution spaces can also be
characterized by estimates on their short-time Fourier transforms.
Let $\phi \in \maclS _s(\rr d)$ ($\phi \in \Sigma _s(\rr d)$) be fixed.
Then the short-time Fourier transform of $f\in \maclS _s'(\rr d)$
(of $f\in \Sigma _s'(\rr d)$) with respect to $\phi$ is defined by
\begin{align}
(V_\phi f)(x,\xi ) &\equiv (2\pi )^{-\frac d2}(f,\phi (\cdo -x)e^{i\scal \cdo \xi})_{L^2}.
\label{Eq:STFTDef}
\intertext{We observe that}
(V_\phi f)(x,\xi ) &= \mascF (f\cdot \overline {\phi (\cdo -x)})(\xi )
\tag*{(\ref{Eq:STFTDef})$'$}
\intertext{(cf. \cite{Toft22}). If in addition $f\in L^p(\rr d)$ for some $p\in [1,\infty ]$, then}
(V_\phi f)(x,\xi ) &= (2\pi )^{-\frac d2}\int _{\rr d}f(y)\overline {\phi (y-x)}e^{-i\scal y\xi}\, dy.
\tag*{(\ref{Eq:STFTDef})$''$}
\end{align}

\par

In the next lemma we present characterizations of
Gelfand-Shilov spaces and their distribution spaces
in terms of estimates on the short-time Fourier transforms of the
involved elements. The proof is omitted, since
the first part follows from \cite{GroZim}, and the second part from
\cite{Toft10,Toft18}.

\par

\begin{lemma}\label{Lemma:GSFourierest}
Let $p\in [1,\infty ]$, $f\in \mathcal S'_{1/2}(\rr d)$, $s\ge \frac 12$,
$\phi \in \maclS _s(\rr d)\setminus 0$ ($\phi \in \Sigma _s(\rr d)\setminus 0$)
and
$$
v_r(x,\xi ) = e^{r(|x|^{\frac 1s}+|\xi |^{\frac 1s})},\qquad r\ge 0.
$$
Then the following is true:
\begin{enumerate}
\item[(1)] $f\in \mathcal S_s(\rr d)$ ($f\in \Sigma _s(\rr d)$), if and only if
\begin{equation}\label{Eq:GSExpCond1}
\nm {V_\phi f\cdot v_r}{L^p}< \infty
\end{equation}
for some $r>0$ (for every $r>0$);

\vrum

\item[(2)] $f\in \mathcal S_s'(\rr d)$ ($f\in \Sigma _s'(\rr d)$), if and only if
\begin{equation}\label{Eq:GSExpCond2}
\nm {V_\phi f/ v_r}{L^p}< \infty
\end{equation}
for every $r>0$ (for some $r>0$).
\end{enumerate}
\end{lemma}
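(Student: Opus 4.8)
The plan is to convert both equivalences into a single pointwise statement about the decay (respectively growth) of $V_\phi f$ against the weight $v_r$, and then to read off membership in the relevant space from the pointwise characterization \eqref{GSexpcond} together with the reconstruction and orthogonality identities for the short-time Fourier transform. (Throughout, $V_\phi f$ is understood through the duality in \eqref{Eq:STFTDef}; for $f\in\maclS _{1/2}'(\rr d)$ and a window $\phi\in\maclS _s(\rr d)$ with $s>\frac 12$ this is made meaningful by rewriting it, via the change-of-window identity below, in terms of a reference window in $\maclS _{1/2}(\rr d)$.) First I would reduce to the case $p=\infty$ and to a single convenient window, e.{\,}g.\ a Gaussian. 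Both reductions rest on the convolution relations
$$
|V_\phi f| \lesssim |V_\phi f|*|V_\phi \phi |
\qquad\text{and}\qquad
|V_{\phi _2}f| \lesssim |V_{\phi _1}f|*|V_{\phi _2}\phi _1|,
\quad (\phi _1,\phi _2)_{L^2}\neq 0,
$$
in which the kernels $V_\phi\phi$ and $V_{\phi _2}\phi _1$ decay like $v_{r'}^{-1}$ for some (every) $r'>0$ because the windows lie in $\maclS _s$ ($\Sigma _s$). Feeding these into a weighted Young inequality for the moderate weight $v_r$ — for which one uses $v_r(X+Y)\lesssim v_{cr}(X)v_{cr}(Y)$ with a constant $c=c(s)$ coming from $|X+Y|^{\frac 1s}\le c(|X|^{\frac 1s}+|Y|^{\frac 1s})$ — transfers a $v_r$-weighted $L^p$ bound to a $v_{r''}$-weighted $L^\infty$ bound and between different windows, at the cost of shrinking $r$ by a fixed factor. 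This is exactly what keeps the quantifiers ``for some $r$'' and ``for every $r$'' stable.

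For the forward implication in (1) I would show that $f,\phi\in\maclS _s(\rr d)$ ($\Sigma _s(\rr d)$) force $V_\phi f\in\maclS _s(\rr {2d})$ ($\Sigma _s(\rr {2d})$). Writing $V_\phi f$ as a partial Fourier transform of $(x,y)\mapsto f(y)\overline{\phi (y-x)}$ and invoking the stability of Gelfand-Shilov spaces under the tensor map, linear pullback and (partial) Fourier transform recorded in Subsection~\ref{subsec1.2}, this is immediate; then \eqref{GSexpcond} on $\rr {2d}$ together with $(|x|+|\xi |)^{\frac 1s}\asymp |x|^{\frac 1s}+|\xi |^{\frac 1s}$ gives $|V_\phi f(x,\xi )|\lesssim v_r(x,\xi )^{-1}$, which is \eqref{Eq:GSExpCond1} for $p=\infty$. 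For the converse I would start from $|V_\phi f(x,\xi )|\lesssim v_r(x,\xi )^{-1}$ (legitimate by the first paragraph) and insert it into the reconstruction formula
$$
f = C\iint V_\phi f(x,\xi )\,\phi (\cdo -x)e^{i\scal \cdo \xi }\,dx\,d\xi ,
$$
which converges absolutely thanks to the exponential decay. Estimating under the integral, the $\xi$-integral converges and the $x$-integral is controlled by $|x|^{\frac 1s}+|y-x|^{\frac 1s}\gtrsim |y|^{\frac 1s}$, yielding $|f(y)|\lesssim e^{-r'|y|^{\frac 1s}}$; the companion bound $|\widehat f(\xi )|\lesssim e^{-r'|\xi |^{\frac 1s}}$ follows identically from the covariance identity $|V_{\widehat\phi}\widehat f(\xi ,-x)|=|V_\phi f(x,\xi )|$. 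Then \eqref{GSexpcond} places $f$ in $\maclS _s(\rr d)$ ($\Sigma _s(\rr d)$).

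Part (2) I would obtain by duality, using that $\maclS _s'$ and $\Sigma _s'$ are the duals of $\maclS _s$ and $\Sigma _s$. For the forward direction, $f\in\maclS _{s;h}'$ gives $|V_\phi f(x,\xi )|\le \nm f{\maclS _{s;h}'}\,\nm{\pi (x,\xi )\phi}{\maclS _{s;h}}$, where $\pi (x,\xi )\phi=\phi (\cdo -x)e^{i\scal \cdo \xi }$; a direct estimate of the seminorm produces $\nm{\pi (x,\xi )\phi}{\maclS _{s;h}}\lesssim e^{r(h)(|x|^{\frac 1s}+|\xi |^{\frac 1s})}$ with $r(h)\to 0$ as $h\to\infty$. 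Since $f\in\maclS _s'$ means $f\in\maclS _{s;h}'$ for all $h$, while $f\in\Sigma _s'$ means $f\in\maclS _{s;h}'$ for some $h$, this gives \eqref{Eq:GSExpCond2} for every (resp.\ some) $r>0$, matching the stated quantifiers. For the converse I would use the Moyal identity $(f,g)=C\iint V_\phi f\,\overline{V_\phi g}\,dx\,d\xi$ to estimate $|(f,g)|\lesssim \nm{V_\phi f/v_r}{L^\infty}\,\nm{V_\phi g\cdot v_r}{L^1}$; because part (1) makes $V_\phi g\cdot v_r$ integrable for every $g$ in the relevant test space, $f$ extends to a continuous functional and hence lies in $\maclS _s'(\rr d)$ ($\Sigma _s'(\rr d)$).

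The main obstacle will not be any single estimate but the \emph{bookkeeping of the exponent $r$} through the $p$-reduction, the window change and the duality, so that the quantifier comes out as ``some $r$'' for the $\maclS _s$, $\maclS _s'$ statements and ``every $r$'' for the $\Sigma _s$, $\Sigma _s'$ statements. The delicate point inside this is the weighted Young inequality when $s<1$, i.{\,}e.\ $\frac 1s>1$, where $v_r$ is only submultiplicative after rescaling $r$ by the constant $c(s)$; one must check that a fixed multiplicative change of $r$ is harmless for both quantifiers. Everything else is a careful but routine use of the reconstruction and orthogonality relations and of \eqref{GSexpcond}, and these are precisely the computations carried out in \cite{GroZim} for part (1) and in \cite{Toft10,Toft18} for part (2).
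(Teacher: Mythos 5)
Your proposal is correct and follows essentially the same route as the sources the paper defers to: the paper omits the proof, citing \cite{GroZim} for part (1) and \cite{Toft10,Toft18} for part (2), and those references argue exactly as you do (change-of-window and weighted Young inequalities to normalize $p$ and the window, the pointwise characterization \eqref{GSexpcond} plus STFT inversion for (1), and duality via the estimate on $\nm {\pi (x,\xi )\phi}{\maclS _{s;h}}$ together with Moyal's identity for (2)). The quantifier bookkeeping you single out is indeed the only delicate point, and your handling of it is consistent with the stated statement, so nothing further is needed.
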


\par

%

\begin{rem}\label{Rem:GSFourierest}
Evidently, if $d_j\in \mathbf N$ are such that $d=d_1+d_2+d_3\ge 1$, $j=1,2,3$,
$x_j\in \rr {d_j}$,  $x=(x_1,x_2,x_3)\in \rr d$ and
$\xi =(\xi _1,\xi _2,\xi _3)\in \rr d$, then we may replace $v_r$ in
\eqref{Eq:GSExpCond1} and \eqref{Eq:GSExpCond2} by
$$
v_r(x,\xi ) =
e^{r(|x|^{\frac 1s} + |\xi _1|^{\frac 1s} + |\xi _2|^{\frac 1s} + |\xi _3|^{\frac 1s})},
$$
in order for Lemma \ref{Lemma:GSFourierest} should hold true. In particular
it follows that if $\phi \in \Sigma _1(\rr d)$ and $f\in \maclS _{1/2}'(\rr d)$,
then $f\in \Sigma _1'(\rr d)$, if and only if
\begin{equation}\tag*{(\ref{Eq:GSExpCond2})$'$}
(x,\xi )\mapsto V_\phi f(x,\xi )e^{-r(|x|+|\xi _1|+|\xi _2|+|\xi _3|)}
\end{equation}
belongs to $L^1(\rr d)$ for some $r>0$.
\end{rem}

\par

\subsection{Modulation spaces}\label{subsec1.4}

\par

As in \cite{Toft19} we need a broader family of modulation spaces
than what is presented
in \cite{Fei1,MolPfa}. See also \cite{Fei6}
for even more general modulation spaces.

\par

\begin{defn}\label{Def:GenModSpaces}
Let $p\in (0,\infty ]^{2d}$, $\sigma \in \operatorname{S}_{2d}$,
$\omega \in \mascP _E(\rr {2d})$ and
$\phi \in \Sigma _1(\rr d)\setminus 0$. Then
the modulation space $M^p_{\sigma ,(\omega )}(\rr d)$
consists of all $f\in \Sigma _1'(\rr d)$ such that
$$
\nm f{M^p_{\sigma ,(\omega )}}
\equiv
\nm {V_\phi f}{L^p_{\sigma ,(\omega )}}
$$
is finite.
\end{defn}

\par

\begin{rem}\label{Rem:ModSpaces}
Let $p,q\in (0,\infty ]^d$, $\sigma _1\in 
\operatorname{S}_{2d}$ be the identity map,
$\sigma _2\in \operatorname{S}_{2d}$
be given by \eqref{Eq:sigma2Def} and
$\omega \in \mascP _E(\rr {2d})$. Then
set
$$
M^{p,q}_{(\omega )} \equiv M^{(p,q)}_{\sigma _1,(\omega)}
\quad \text{and}\quad
W^{p,q}_{(\omega )} \equiv M^{(q,p)}_{\sigma _2,(\omega)}.
$$
We observe that $M^{p,q}_{(\omega )}(\rr d)$ is a slight
generalization of the standard modulation spaces, introduced
in \cite{Fei1}, and $W^{p,q}_{(\omega )}(\rr d)$ is a Wiener
amalgam space, considered in \cite{Fei0}.

\par

In some situations later on one has
that $p$ and $q$ are given by
\begin{alignat*}{2}
p &= & (p_1,\dots ,p_n) &= (p_{1,1},\dots ,p_{1,d_1},
\dots ,p_{n,1},\dots ,p_{n,d_n}),
\\[1ex]
q &= & (q_1,\dots ,q_n) &= (q_{1,1},\dots ,q_{1,d_1},\dots 
,q_{n,1},\dots ,q_{n,d_n}),
\end{alignat*}
for some $d_j\in \mathbf N$
$j=1,\dots ,n$, where
$$
p_j=(p_{j,1},\dots ,p_{j,d_j})\in (0,\infty ]^{d_j},
\quad
q_j=(q_{j,1},\dots ,q_{j,d_j})\in (0,\infty ]^{d_j}.
$$
It follows that
$$
M^{p,q}_{(\omega )}(\rr d)
=
M^{p_1,\dots ,p_n,q_1,\dots ,q_n}_{(\omega )}(\rr d)
=
M^{p_1,\dots ,p_n,q_1, \dots ,q_n}_{(\omega )}
(\rr {d_1+\cdots +d_n})
$$
and
$$
W^{p,q}_{(\omega )}(\rr d)
=
W^{p_1,\dots ,p_n,q_1,\dots ,q_n}_{(\omega )}(\rr d)
=
W^{p_1,\dots ,p_n,q_1, \dots ,q_n}_{(\omega )}
(\rr {d_1+\cdots +d_n})
$$
consist of all $f\in \Sigma _1'(\rr d)$ such that
$$
\nm f{M^{p,q}_{(\omega )}}
\equiv \nm {V_\phi f}{L^{p,q}_{(\omega )}}
\quad \text{respectively}\quad
\nm f{W^{p,q}_{(\omega )}}
\equiv \nm {V_\phi f}{L^{p,q}_{*,(\omega )}}
$$
are finite. Here $d=d_1+\dots +d_n$.
\end{rem}

\par

In our situations it is common that $n=3$ in Remark \ref{Rem:ModSpaces},
i.{\,}e.,
$$
M^{p,q}_{(\omega )}(\rr {d_1+d_2+d_3})
=
M^{p_1,p_2,p_3,q_1,q_2,q_3}_{(\omega )}(\rr {d_1+d_2+d_3}).
$$
It is also common that $d_1=d_2=d_3=d\ge 1$. In this situation
we write
$$
M^{p,q}_{(\omega )}(\rr {d+d+d}) = M^{p,q}_{(\omega )}(\rr {3d})
=
M^{p_1,p_2,p_3,q_1,q_2,q_3}_{(\omega )}(\rr {3d})
\phantom .
$$
and
$$
W^{p,q}_{(\omega )}(\rr {d+d+d}) = W^{p,q}_{(\omega )}(\rr {3d})
=
W^{p_1,p_2,p_3,q_1,q_2,q_3}_{(\omega )}(\rr {3d}).
$$

\par

In what follows, the conjugate exponent $p' \in (0,\infty ]$ of
$p \in (0,\infty ]$ is defined by
$$
p'
=
\begin{cases}
1 & \text{when}\ p=\infty
\\[1ex]
\frac p{p-1}  & \text{when}\ 1<p<\infty
\\[1ex]
\infty  & \text{when}\ p\le 1.
\end{cases}
$$
For any $p=(p_1,\dots ,p_n)\in (0,\infty ]^n$ and
$q=(q_1,\dots ,q_n)\in (0,\infty ]^n$ we set 
$$
p^{-1}=\frac 1p = (p_1^{-1},\dots ,p_n^{-1}),
\quad
p'=(p_1',\dots ,p_n'),
\quad
p+q=(p_1+q_1,\dots ,p_n+q_n)
$$
and
$$
p\le q\quad (p<q)
\qquad \text{when}\qquad
p_j\le q_j \quad (p_j<q_j),\ j=1,\dots ,n.
$$
For $r\in (0,\infty ]$ we also set
$$
p=r,
\quad
p\le r
\quad \text{respectively}\quad
p<r
$$
when $p_j=r$, $p_j\le r$ respectively $p_j<r$ for
every $j=1,\dots ,n$.

\par

In the following proposition we list some basic properties of modulation spaces.
We omit the proof since the result follows by straight-forward
generalizations of the analysis in \cite{GaSa,Gro2,Toft13} (see also
\cite{FeiGro1,FeiGro2,FeiGro3,Rau1,Rau2}).

\par

\begin{prop}\label{Prop:ModSpacesBasicProp}
Let $p,q\in (0,\infty ]^n$, $r\in (0,1]$ be such that $r\le \min (p,q)$,
$d_j\in \mathbf N$, $j=1,\dots ,n$,
$d=d_1+\cdots +d_n$, $\omega ,v\in \mascP _E(\rr {2d})$
be such that $\omega$ is $v$-moderate, and let
$\phi \in \Sigma _1(\rr d)\setminus 0$.
\begin{enumerate}
\item $\Sigma _1(\rr d)\subseteq M^{p,q}_{(\omega )}(\rr d)
\subseteq \Sigma _1'(\rr d)$. If in addition $\max (p,q)<\infty$, then
$\Sigma _1(\rr d)$ is dense in $M^{p,q}_{(\omega )}(\rr d)$;

\vrum

\item the definitions of $M^{p,q}_{(\omega )}(\rr d)$
is independent of the choices of $\phi \in M^1_{(v)}(\rr d)\setminus 0$,
and different choices of $\phi$ give rise to equivalent quasi-norms;

\vrum

\item $M^{p,q}_{(\omega )}(\rr d)$ is a quasi-Banach space which
increase with $p$ and $q$, and decrease with
$\omega$. If in addition $\min (p,q)\ge 1$, then
$M^{p,q}_{(\omega )}(\rr d)$ is a Banach space;

\vrum

\item The $L^2(\rr d)$ scalar product,
$(\cdo ,\cdo )_{L^2(\rr d)}$, on $\Sigma _1(\rr d)\times \Sigma _1(\rr d)$ is
uniquely extendable to a duality between $M^{p,q}_{(\omega)}(\rr d)$
and $M^{p',q'}_{(1/\omega)}(\rr d)$.

\par

If in addition $p,q<(\infty ,\dots ,\infty )$, then
the dual of $M^{p,q}_{(\omega)}(\rr d)$ can be identified with
$M^{p',q'}_{(1/\omega)}(\rr d)$, through $(\cdo ,\cdo )_{L^2(\rr d)}$;

\vrum

\item Let $\omega _0(x,\xi )= \omega (-\xi ,x)$. Then $\mascF$ on $\Sigma _1'(\rr d)$
restricts to a homeomorphism from $M^{p,q}_{(\omega )}(\rr d)$ to
$W^{q,p}_{(\omega _0)}(\rr d)$.
\end{enumerate}

\par

Similar facts hold true with $W^{p,q}_{(\omega)}$ spaces in place
of corresponding $M^{p,q}_{(\omega)}$ at each occurrence.
\end{prop}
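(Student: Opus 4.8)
The plan is to reduce all five assertions to elementary facts about the weighted discrete mixed quasi-norm spaces $\ell ^{p,q}_{(\omega )}$, by means of a Gabor-frame characterization that serves as the common engine. The two analytic inputs are the pointwise change-of-window estimate
\begin{equation*}
|V_\phi f(x,\xi )|\lesssim \bigl ( |V_\psi f|*|V_\phi \psi |\bigr )(x,\xi ),
\qquad \phi ,\psi \in \Sigma _1(\rd )\setminus 0,
\end{equation*}
and the resulting equivalence
\begin{equation*}
\nm f{M^{p,q}_{(\omega )}}\asymp
\nm {\{ V_\phi f(\alpha k,\beta n)\} _{k,n\in \zz d}}{\ell ^{p,q}_{(\omega )}}
\end{equation*}
for $\phi \in \Sigma _1(\rd )\setminus 0$ and sufficiently dense lattices. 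This equivalence rests on the above estimate together with the discrete quasi-Young inequality $\nm {c*a}{\ell ^{p,q}_{(\omega )}}\lesssim \nm c{\ell ^{p,q}_{(\omega )}}\nm a{\ell ^r_{(v)}}$ with $r=\min (1,p,q)$, which is valid precisely because $(\sum _j|b_j|)^r\le \sum _j|b_j|^r$ when $0<r\le 1$, applied with the rapidly decaying sequence $a$ obtained by sampling $V_\phi \psi$; by \eqref{GSexpcond} this $a$ lies in $\ell ^r_{(v)}$ for every $r$ and every submultiplicative $v$. In particular the window independence (2) is immediate, since the discrete coefficient quasi-norms for two windows $\phi ,\psi \in \Sigma _1(\rd )$ are mutually dominated. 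This is the standard machinery of \cite{GaSa,Toft13}, adapted to the present weight class.

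For (1), the inclusion $M^{p,q}_{(\omega )}(\rd )\subseteq \Sigma _1'(\rd )$ holds by Definition \ref{Def:GenModSpaces}. For $\Sigma _1(\rd )\subseteq M^{p,q}_{(\omega )}(\rd )$ I combine Lemma \ref{Lemma:GSFourierest}(1) — which gives $V_\phi f\cdot v_r\in L^{p,q}$ for every $r$ when $f\in \Sigma _1(\rd )$ — with the moderateness bound $\omega \lesssim e^{r|\cdo |}\lesssim v_r$ from \eqref{Eq:ModWeightProp}, so that $V_\phi f\cdot \omega \in L^{p,q}$. Density of $\Sigma _1(\rd )$ for $\max (p,q)<\infty$ then follows by truncating $V_\phi f$ to a large ball, mollifying, and reconstructing via the inversion formula: the tails vanish in the mixed quasi-norm by dominated convergence, which is available exactly because $p,q<\infty$.

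For (3) and (5) the bookkeeping is routine once the engine is in place. The quasi-triangle inequality and the monotonicity in $\omega$ are inherited verbatim from $L^{p,q}_{(\omega )}$, and completeness follows since a Cauchy sequence in $M^{p,q}_{(\omega )}(\rd )$ has short-time Fourier transforms Cauchy in the complete space $L^{p,q}_{(\omega )}(\rr {2d})$, whose limit is of the form $V_\phi f$ for a unique $f\in \Sigma _1'(\rd )$ by reconstruction; when $\min (p,q)\ge 1$ the mixed quasi-norm is a genuine norm, which gives the Banach case. Monotonicity in $p$ and $q$ is transported through the Gabor equivalence to the embedding $\ell ^{p}\hookrightarrow \ell ^{p_1}$ for $p\le p_1$. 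For (5) I use the covariance identity $|V_{\widehat \phi }\widehat f(\xi ,-x)|=|V_\phi f(x,\xi )|$ with $\widehat \phi \in \Sigma _1(\rd )$; the substitution $(x,\xi )\mapsto (\xi ,-x)$ interchanges the two groups of phase-space variables, converting the identity ordering defining $M^{p,q}$ into the $\sigma _2$ ordering of \eqref{Eq:sigma2Def} defining $W^{q,p}$, swapping the exponents and producing the weight $\omega _0(x,\xi )=\omega (-\xi ,x)$.

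Finally, for (4), Moyal's identity holds on $\Sigma _1(\rd )\times \Sigma _1(\rd )$, and transporting it through the frame expresses the pairing as a sum $\sum _\lambda c^f_\lambda \overline {c^g_\lambda}$ of Gabor coefficients. The pairing estimate then follows from discrete H\"older, $\sum _\lambda |c^f_\lambda||c^g_\lambda |\le \nm {c^f}{\ell ^{p,q}_{(\omega )}}\nm {c^g}{\ell ^{p',q'}_{(1/\omega )}}$, extending $(\cdo ,\cdo )_{L^2}$ to a duality between $M^{p,q}_{(\omega )}(\rd )$ and $M^{p',q'}_{(1/\omega )}(\rd )$, and the identification of the dual when $p,q<\infty$ follows from $(\ell ^{p,q}_{(\omega )})'\cong \ell ^{p',q'}_{(1/\omega )}$ pulled back through the analysis and synthesis maps. \emph{This is where the one genuine obstacle lies}: in the non-locally-convex range $p,q<1$ the continuous space $L^{p}$ has trivial dual, so neither the continuous H\"older nor the continuous Young inequality is available, and both the pairing estimate and the dual identification must be carried out on the discrete side, where $\ell ^{p}\hookrightarrow \ell ^{1}$ and $(\ell ^p)'=\ell ^\infty$ for $p\le 1$ save the argument. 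Securing the Gabor equivalence uniformly for $p,q<1$ — that is, controlling the synthesis operator on $\ell ^{p,q}_{(\omega )}$ by the discrete quasi-Young inequality with $r=\min (1,p,q)$ — is therefore the principal technical point; once it is in hand, (1)--(5) reduce to standard sequence-space facts.
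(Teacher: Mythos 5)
Your proposal is correct and follows essentially the route the paper itself takes: the paper omits the proof and refers to the Gabor-analysis machinery of \cite{GaSa,Gro2,Toft13}, which is exactly the engine you build (change-of-window estimate, discrete quasi-Young inequality with $r=\min (1,p,q)$, and reduction of all five assertions to $\ell ^{p,q}_{(\omega )}$ facts), including the correct observation that for $p,q<1$ everything must be run on the discrete side. The only small mismatch is that your window-independence argument is carried out for $\phi ,\psi \in \Sigma _1(\rr d)$ rather than for the full class $M^1_{(v)}(\rr d)\setminus 0$ claimed in (2) (for which, in the quasi-Banach range, the natural window class is $M^r_{(v)}$ as in Proposition \ref{Prop:ConseqThmS}), but this is the same convention adopted in the cited references.
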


\par

We set $M^{p,q}=M^{p,q}_{(\omega )}$
and $W^{p,q}=W^{p,q}_{(\omega )}$ when $\omega =1$ and
$p,q\in (0,\infty ]^n$. We observe that
$$
M^{p_0,q_0} = M^{p,q},
\quad
W^{p_0,q_0} = W^{p,q}
\quad \text{and}\quad
M^{p_0}=M^{p_0,p_0}
$$
when
$$
p=(p_0,\dots ,p_0)\in (0,\infty]^n
\quad \text{and}\quad
q=(q_0,\dots ,q_0)\in (0,\infty ]^n.
$$

\par

An important property for modulation spaces is that it is possible to
discretize them in terms of Gabor expansions. The following result is
based on \cite[Theorem S]{Gro1} and the Gabor analysis in \cite{GaSa}.
(See also Proposition 3.6 and Theorem 3.7 in \cite{Toft13}.)

\par

\begin{prop}\label{Prop:ConseqThmS}
Let $\omega ,v\in \mascP _E(\rr {2d})$ be such that $\omega$ is $v$-moderate,
$p,q\in (0,\infty ]^d$ and let $r\in (0,1]$ be such that $r\le \min (p,q)$. Then there
is an $\ep _0>0$ such that for every $\ep \in (0,\ep _0]$, there are
$\phi \in M^r_{(v)}(\rr d)$ and $\psi \in \Sigma _1(\rr d)$ such that
\begin{align}
f &= \sum _{j,\iota \in \ep \zz d} (V_\phi f)(j,\iota )
e^{i\scal {\cdo }{\iota }}\psi (\cdo -j)\notag
\\[1ex]
&=
\sum _{j,\iota \in \ep \zz d} (V_\psi f)(j,\iota )
e^{i\scal {\cdo }{\iota }}\phi (\cdo -j),
\quad
f \in M^\infty _{(\omega )}(\rr d),
\label{Eq:GabExpForm}
\end{align}
with convergence of the series with respect to the weak$^*$ topology.
Furthermore,
$$
\nm f{M^{p,q}_{(\omega )}}
\asymp
\nm {V_\phi f}{\ell ^{p,q}_{(\omega )}(\ep \zz {2d})}
\asymp
\nm {V_\psi f}{\ell ^{p,q}_{(\omega )}(\ep \zz {2d})}.
$$
If in addition $\max (p,q)<\infty$, then the series in \eqref{Eq:GabExpForm}
converge unconditionally to $f$ with respect to the $M^{p,q}_{(\omega )}$ norm.

\par

The same holds true with $W^{p,q}_{(\omega )}$ and $\ell ^{p,q}_{*,(\omega )}$
in place of $M^{p,q}_{(\omega )}$ and $\ell ^{p,q}_{(\omega )}$, respectively,
at each occurrence.
\end{prop}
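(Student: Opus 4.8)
The plan is to reduce the statement to the Gabor frame machinery for quasi-Banach modulation spaces from \cite{Gro1,GaSa} (see also Proposition~3.6 and Theorem~3.7 in \cite{Toft13}), the only genuinely new point being the bookkeeping of general moderate weights together with quasi-Banach exponents. Throughout I may assume, by the discussion following \eqref{eq:2Next}, that $v$ is submultiplicative and of the exponential type \eqref{Eq:CondSubWeights}, so that $\omega$ is $v$-moderate in the strong form \eqref{eq:2Next}.

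First I would fix $\psi \in \Sigma_1(\rr d)\setminus 0$ and appeal to the Gabor analysis in \cite{Gro1}: there is $\ep_0>0$ such that for every $\ep\in(0,\ep_0]$ the Gabor system $\{e^{i\scal{\cdo}{\iota}}\psi(\cdo -j)\}_{j,\iota\in\ep\zz d}$ is a frame for $\lrd$ whose frame operator $S$ is continuously invertible on every $M^{p,q}_{(\omega)}(\rr d)$. I then set $\phi=S^{-1}\psi$ for the canonical dual window. Since $S$ commutes with the lattice time-frequency shifts, the two series in \eqref{Eq:GabExpForm} are exactly the dual-frame reconstruction identities attached to this pair: the first is $f=SS^{-1}f$ and the second is $f=S^{-1}Sf$, rewritten through the coefficient functionals $(V_\phi f)(j,\iota)$ and $(V_\psi f)(j,\iota)$. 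The membership $\phi\in M^r_{(v)}(\rr d)$ is the quasi-Banach analogue of the Gr\"ochenig--Leinert spectral invariance: $S^{-1}$ is represented by a kernel in the weighted (twisted) convolution algebra $\ell^r_{(v)}$, which is inverse-closed precisely because $r\le 1$ makes $\ell^r_{(v)}$ a quasi-Banach convolution algebra. This is the first place where the hypothesis $r\le 1$ is essential.

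The norm equivalences are the analytic core. I would realize the analysis and synthesis operators
$$
C_\phi : f \mapsto \big((V_\phi f)(j,\iota)\big)_{j,\iota\in\ep\zz d},
\qquad
D_\psi : c \mapsto \sum_{j,\iota\in\ep\zz d} c_{j,\iota}\, e^{i\scal{\cdo}{\iota}}\psi(\cdo -j),
$$
together with the pair obtained by interchanging $\phi$ and $\psi$, and prove that
$$
C_\phi : M^{p,q}_{(\omega)}(\rr d)\to \ell^{p,q}_{(\omega)}(\ep\zz{2d})
\qquad\text{and}\qquad
D_\psi : \ell^{p,q}_{(\omega)}(\ep\zz{2d})\to M^{p,q}_{(\omega)}(\rr d)
$$
are continuous. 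Both bounds flow from a single discrete Young-type estimate. Applying $V_\phi$ to $D_\psi c$ produces a convolution of $c$ against the sampled cross-ambiguity kernel $\lambda\mapsto V_\phi\psi(\lambda)$, which lies in $\ell^r_{(v)}(\ep\zz{2d})$ since $\phi,\psi\in M^r_{(v)}(\rr d)$; and for $r\le\min(1,p,q)$ with $\omega$ $v$-moderate one has the continuous embedding $\ell^r_{(v)}*\ell^{p,q}_{(\omega)}\hookrightarrow \ell^{p,q}_{(\omega)}$. This convolution relation replaces the Minkowski inequality, which is unavailable when $\min(p,q)<1$, and is the second place where $r\le\min(1,p,q)$ is used. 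Inserting $f=D_\psi C_\phi f$ into these bounds yields $\nm f{M^{p,q}_{(\omega)}}\asymp \nm{V_\phi f}{\ell^{p,q}_{(\omega)}(\ep\zz{2d})}$, and the symmetric computation gives the equivalence with $\nm{V_\psi f}{\ell^{p,q}_{(\omega)}(\ep\zz{2d})}$.

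It remains to treat convergence. For $f\in M^\infty_{(\omega)}(\rr d)$ the coefficients lie in $\ell^\infty_{(\omega)}$, the partial sums of \eqref{Eq:GabExpForm} are bounded in $M^\infty_{(\omega)}$, and testing against $\Sigma_1(\rr d)\subseteq M^1_{(1/\omega)}(\rr d)$ via the duality in Proposition~\ref{Prop:ModSpacesBasicProp}(4) gives weak$^*$ convergence. When $\max(p,q)<\infty$ the tails $\nm{(V_\phi f)\chi_{\ep\zz{2d}\setminus K}}{\ell^{p,q}_{(\omega)}}$ vanish as $K\uparrow\ep\zz{2d}$, so the synthesis series converges unconditionally in $M^{p,q}_{(\omega)}$, its limit being $f$ by the density of $\Sigma_1(\rr d)$ from Proposition~\ref{Prop:ModSpacesBasicProp}(1). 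Finally, the $W^{p,q}_{(\omega)}$ and $\ell^{p,q}_{*,(\omega)}$ statements follow verbatim upon replacing the identity permutation by $\sigma_2$ of \eqref{Eq:sigma2Def}, i.e. performing the frequency summation before the spatial one; none of the convolution estimates are sensitive to this reordering. I expect the main obstacle to be the quasi-Banach regime $\min(p,q)<1$, where both the spectral-invariance membership $\phi\in M^r_{(v)}$ and the continuity of $C_\phi,D_\psi$ depend on the sharp exponent $r\le\min(1,p,q)$ in the convolution inequality, so that the argument genuinely departs from the classical Banach-space Gabor theory.
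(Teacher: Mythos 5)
Your overall architecture coincides with the one the paper relies on: Proposition \ref{Prop:ConseqThmS} is stated without proof and referred to \cite[Theorem S]{Gro1}, \cite{GaSa} and to Proposition 3.6 and Theorem 3.7 in \cite{Toft13}, and your reconstruction (canonical dual window, boundedness of the analysis and synthesis operators via the discrete Young inequality $\ell^r_{(v)}*\ell^{p,q}_{(\omega)}\hookrightarrow \ell^{p,q}_{(\omega)}$ for $r\le\min(1,p,q)$, weak$^*$ convergence by testing against $\Sigma _1(\rr d)\subseteq M^1_{(1/\omega)}(\rr d)$, and the permutation $\sigma _2$ for the $W^{p,q}_{(\omega )}$ case) is exactly that machinery.

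The one step whose justification would fail as written is the membership $\phi=S^{-1}\psi\in M^r_{(v)}(\rr d)$. You derive it from inverse-closedness of the twisted convolution algebra $\ell^r_{(v)}$, asserting that this holds ``precisely because $r\le 1$ makes $\ell^r_{(v)}$ a quasi-Banach convolution algebra''. Being a (quasi-)Banach algebra does not give inverse-closedness: Wiener's lemma for $\ell^1_{(v)}$ is a theorem requiring the GRS condition on $v$, and here $v\in\mascP _E$ is only assumed submultiplicative, so by \eqref{Eq:CondSubWeights} it may grow like $e^{r|x|}$, for which the GRS condition and hence spectral invariance genuinely fail. What actually saves the step --- and is the content of \cite[Theorem S]{Gro1} --- is that $\ep _0$ is chosen so small that $S$ is a small perturbation of a multiple of the identity in the $\ell^r_{(v)}$-based operator algebra (using the superexponential decay of $V_\psi\psi$ for $\psi\in\Sigma _1(\rr d)$), so that $S^{-1}$ is produced by a Neumann series converging in that algebra; the convergence of this series uses only the $r$-norm property of the quasi-norm (cf. Remark \ref{Rem:pNorms}) and the smallness of $\ep$, not any spectral-invariance principle. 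With that replacement the remainder of your argument goes through.
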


\par

\subsection{Pseudo-differential operators}

\par

Next we discuss some issues in pseudo-differential calculus.
Let $\GL (d,\Omega)$ be the set of all $d\times d$-matrices with
entries in the set $\Omega$, and let $a\in \Sigma _1 
(\rr {2d})$ and $A\in \GL (d,\mathbf R)$ be fixed.
Then the \emph{pseudo-differential operator} $\op _A(a)$ with
\emph{symbol} $a$ is the linear and
continuous operator from $\Sigma _1(\rr d)$ to
$\Sigma _1(\rr d)$, defined by
\begin{equation}\label{e0.5}
(\op _A(a)f)(x)
=
(2\pi  ) ^{-d}\iint _{\rr {2d}}a(x-A(x-y),\zeta )f(y)e^{i\scal {x-y}\zeta }\, dyd\zeta ,
\end{equation}
when $f\in \Sigma _1(\rr d)$. For
general $a\in \Sigma _1'(\rr {2d})$, the
pseudo-differential operator $\op _A(a)$ is defined as the linear and
continuous operator from $\Sigma _1(\rr d)$ to $\Sigma _1'(\rr d)$ with
distribution kernel given by
\begin{equation}\label{atkernel}
K_{a,A}(x,y)=(2\pi )^{-\frac d2}(\mascF _2^{-1}a)(x-A(x-y),x-y).
\end{equation}
Here $\mascF _2F$ is the partial Fourier transform of $F(x,y)\in
\Sigma _1'(\rr {2d})$ with respect to the $y$ variable. This
definition makes sense, since the mappings
\begin{equation}\label{homeoF2tmap}
\mascF _2\quad \text{and}\quad F(x,y)\mapsto F(x-A(x-y),x-y)
\end{equation}
are homeomorphisms on $\Sigma _1'(\rr {2d})$.
In particular, the map $a\mapsto K_{a,A}$ is a homeomorphism on
$\Sigma _1'(\rr {2d})$.

\par

We set $\op _t(a)=\op _{t\cdot I}(a)$, when
$t\in \mathbf R$ and $I=I_d\in \GL (d,\mathbf R)$ is
the $d\times d$ identity matrix.
The normal or Kohn-Nirenberg representation, $a(x,D)$, is obtained
when $t=0$, and the Weyl quantization, $\op ^w(a)$, is obtained
when $t=\frac 12$. That is,
$$
a(x,D) = \op _0(a)
\quad \text{and}\quad \op ^w(a) = \op _{1/2}(a).
$$

\par

The following result explains the relationship between
$a_1$ and $a_2$ in the identity $\op _{A_1}(a_1) =
\op _{A_2}(a_2)$.
We refer to Propositions 1.1 and
 2.8 \cite{Toft15} for the proof
(see also \cite{Ho1} for background ideas).

\par

\begin{prop}\label{Prop:CalculiTransfer}
Let $p,q\in (0,\infty ]$ $A,A_1,A_2\in \GL (d,\mathbf R)$,
$\omega \in \mascP _E(\rr {4d})$ and let
\begin{equation}
\omega _A(x,\xi ,\eta ,y) = \omega (x +Ay,\xi +A^*\eta
,\eta ,y).
\end{equation}
Then the following is true:
\begin{enumerate}
\item $e^{i\scal {AD_\xi}{D_x }}$ is homeomorphic
on $\Sigma _1(\rr {2d})$ and uniquely extendable to
a homeomorphism on $\Sigma _1'(\rr {2d})$;

\vrum

\item if $a_1,a_2\in \Sigma _1'(\rr {2d})$, then
\begin{equation}\label{calculitransform}
\op _{A_1}(a_1) = \op _{A_2}(a_2) \quad \Leftrightarrow \quad
e^{i\scal {A_1D_\xi}{D_x }}a_1(x,\xi )
=e^{i\scal {A_2D_\xi}{D_x }}a_2(x,\xi );
\end{equation}

\vrum 

\item $e^{i\scal {AD_\xi}{D_x }}$ on $\Sigma _1'(\rr {2d})$
restricts to a homoemorphism from
$M^{p,q}_{(\omega )}(\rr {2d})$ to
$M^{p,q}_{(\omega _A)}(\rr {2d})$.
\end{enumerate}
\end{prop}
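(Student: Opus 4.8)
The plan is to reduce everything to understanding the single operator $e^{i\scal{AD_\xi}{D_x}}$ and the symbol-transfer identity it induces. The central computational fact, which I would establish first, is the intertwining relation on kernels. Recall from \eqref{atkernel} that $\op_A(a)$ has kernel $K_{a,A}(x,y)=(2\pi)^{-d/2}(\mascF_2^{-1}a)(x-A(x-y),x-y)$. Since by \eqref{homeoF2tmap} the partial Fourier transform $\mascF_2$ and the linear pullback $F(x,y)\mapsto F(x-A(x-y),x-y)$ are each homeomorphisms on $\Sigma_1'(\rr{2d})$, the map $a\mapsto K_{a,A}$ is a homeomorphism, and hence so is $a\mapsto \op_A(a)$ viewed as a map into kernels. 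A direct (if slightly tedious) computation on the Fourier side shows that the symbols $a_1,a_2$ producing the \emph{same} operator, i.e. the same kernel, are related precisely by $e^{i\scal{A_1D_\xi}{D_x}}a_1=e^{i\scal{A_2D_\xi}{D_x}}a_2$; this is the content of \eqref{calculitransform}. I would derive this by writing both kernels via \eqref{atkernel}, equating them, and translating the resulting change-of-variables identity for the kernels into the stated operator identity on symbols through the Fourier multiplier $e^{i\scal{AD_\xi}{D_x}}$.

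For part (1), I would show $e^{i\scal{AD_\xi}{D_x}}$ is a homeomorphism on $\Sigma_1(\rr{2d})$ and extends to one on $\Sigma_1'(\rr{2d})$. The cleanest route is to conjugate by the Fourier transform: on the Fourier-transform side the operator becomes a \emph{linear change of variables} (a shear) composed with multiplication, and such linear pullbacks together with $\mascF$ are homeomorphisms on both $\Sigma_1(\rr{2d})$ and $\Sigma_1'(\rr{2d})$ by the invariance properties of Gelfand--Shilov spaces under the Fourier transform and under linear substitutions recorded in Section \ref{subsec1.2}. The extension to $\Sigma_1'(\rr{2d})$ is then obtained by duality, using that the $L^2$ form extends to $\Sigma_1'\times\Sigma_1$. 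Part (2) then follows by combining (1) with the kernel computation above: both sides of \eqref{calculitransform} are well-defined in $\Sigma_1'(\rr{2d})$, and the equivalence of $\op_{A_1}(a_1)=\op_{A_2}(a_2)$ with the symbol identity is exactly the kernel identity transported through $a\mapsto K_{a,A}$.

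For part (3), the mapping property $e^{i\scal{AD_\xi}{D_x}}:M^{p,q}_{(\omega)}\to M^{p,q}_{(\omega_A)}$, I would work on the short-time Fourier transform side and exploit how $e^{i\scal{AD_\xi}{D_x}}$ acts on $V_\phi$. The key is a covariance identity: conjugating the operator by the STFT intertwines it with a linear change of the phase-space variables $(x,\xi,\eta,y)$, and one checks that this change is precisely the one built into the definition $\omega_A(x,\xi,\eta,y)=\omega(x+Ay,\xi+A^*\eta,\eta,y)$. Choosing the window $\phi$ suitably (so that $e^{i\scal{AD_\xi}{D_x}}$ maps it to another admissible window in $M^1_{(v)}$, using Proposition \ref{Prop:ModSpacesBasicProp}(2) on window-independence), the weighted mixed-norm $L^{p,q}_{(\omega_A)}$-quasinorm of $V_\phi(e^{i\scal{AD_\xi}{D_x}}a)$ is identified with the $L^{p,q}_{(\omega)}$-quasinorm of $V_\phi a$ up to equivalence, giving both the boundedness and its inverse, hence a homeomorphism.

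The main obstacle I anticipate is the bookkeeping in part (3): tracking the exact linear substitution that $e^{i\scal{AD_\xi}{D_x}}$ induces on the six-dimensional phase-space variable and verifying that it matches $\omega_A$ so that the weight is absorbed \emph{exactly}, rather than merely up to an equivalent moderate weight. Because $p,q$ are allowed in the full range $(0,\infty]$ and the spaces are only quasi-Banach, I cannot invoke duality or interpolation freely; instead the argument must be carried out directly on the STFT via the covariance identity and the moderateness estimates \eqref{eq:2Next}. Once the substitution is pinned down, the homeomorphism property is immediate since the inverse operator $e^{-i\scal{AD_\xi}{D_x}}$ is of the same type and reverses the substitution.
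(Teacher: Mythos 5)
The paper offers no proof of its own for this proposition: it defers entirely to Propositions 1.1 and 2.8 of \cite{Toft15} (with \cite{Ho1} for background). Your sketch essentially reconstructs the arguments of those references. For (2) you go through the kernel map $a\mapsto K_{a,A}$ of \eqref{atkernel}, which is a bijection on $\Sigma _1'(\rr {2d})$ by \eqref{homeoF2tmap}, and translate equality of kernels into the symbol identity \eqref{calculitransform}; for (3) you use the covariance identity expressing $|V_\Phi (e^{i\scal {AD_\xi}{D_x}}a)|$ as $|V_{\Phi _0}a|$ composed with the substitution $(x,\xi ,\eta ,y)\mapsto (x+Ay,\xi +A^*\eta ,\eta ,y)$, followed by window independence (Proposition \ref{Prop:ModSpacesBasicProp}\,(2)); since $e^{-i\scal {AD_\xi}{D_x}}$ is of the same form, the two-sided bound and hence the homeomorphism follow. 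This is the route taken in \cite[Proposition 2.8]{Toft15}, and, as you observe, it survives in the full quasi-Banach range because no duality is invoked.

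One step in your part (1) is misjustified. Conjugating $e^{i\scal {AD_\xi}{D_x}}$ by $\mascF$ does not yield ``a linear change of variables (a shear) composed with multiplication'': it yields \emph{pure multiplication} by the unimodular function $e^{i\scal {Ax}{\xi}}$ (the paper records exactly this in the remark following the proposition). Continuity of that multiplication on $\Sigma _1(\rr {2d})$ and $\Sigma _1'(\rr {2d})$ is not a consequence of invariance under linear substitutions; it is the genuinely delicate point of (1), because the phase is bilinear in $(x,\xi )$, its derivatives grow polynomially of order $|\alpha |$, and $s=1$ is the borderline Gelfand--Shilov index at which this growth can still be absorbed by the decay $e^{-r|(x,\xi )|}$, for every $r>0$, encoded in the $\Sigma _1$-seminorms. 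The paper cites \cite{Tr,CaTo} precisely for this multiplier property. With that lemma supplied in place of the appeal to linear substitutions, the remainder of your argument is sound.
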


\par

Note here that the latter equality in \eqref{calculitransform} 
makes sense since it is equivalent to
$$
e^{i\scal {A_2x}{\xi}}\widehat a_2(\xi ,x)
=e^{i\scal {A_1x}{\xi}}\widehat a_1(\xi ,x),
$$
and that the map $a\mapsto e^{i\scal {Ax} \xi }a$ is continuous on
$\Sigma _1 '(\rr {2d})$ (cf. e.{\,}g. \cite{Tr,CaTo}). 

\par

The next result is a slight extension
of \cite[Theorem 3.1]{Toft19}, and follows
from \cite[Theorem 3.1]{Toft19} and
Proposition \ref{Prop:CalculiTransfer}. The details are
left for the reader.

\par

\begin{thm}\label{Thm:OpCont}
Let $A\in \GL (d,\mathbf R)$, $\sigma \in \operatorname{S}_{2d}$,
$\omega _1,\omega _2
\in \mathscr P_{E}(\rr {2d})$ and
$\omega _0\in \mathscr P_{E}(\rr {4d})$
be such that
\begin{equation}\label{Eq:WeightPseudoACond}
\frac {\omega _2(x,\xi  )}{\omega _1
(y,\eta )} \lesssim \omega _0( x-A(x-y),\eta -A^*(\eta -\xi ),
\xi -\eta ,y-x ).
\end{equation}
Also let $p _1,p _2\in (0,\infty]^{2d}$,
$p,q\in (0,\infty]$ be such that
\begin{equation}\label{Eq:pqconditions}
\frac 1{p _2}-\frac 1{p _1}
= \frac 1{p}+\min \left ( 0,\frac 1{q}-1\right ) ,
\quad
q \le \min (p_2)\le
\max (p_2) \le p,
\end{equation}
hold, and let
$a\in M^{p,q}_{(\omega _0)}(\rr {2d})$. Then
$\op _A(a)$ from $\mathcal S_{1/2}(\rr d)$ to $\mathcal 
S_{1/2}'(\rr d)$
extends uniquely to a continuous map from $M^{p_1}
_{\sigma ,(\omega _1)}(\rr d)$ to
$M^{p_2}_{\sigma ,(\omega _2)}(\rr d)$, and
\begin{equation}\label{Eq:PsDOEst}
\nm {\op _A(a)f}{M^{p_2}_{\sigma ,(\omega _2)}}
\lesssim
\nm a{M^{p,q}_{(\omega _0)}}
\nm f{M^{p_1}_{\sigma ,(\omega _1)}},
\quad a\in M^{p,q}_{(\omega _0)}(\rr {2d}),\ 
f\in M^{p_1}_{\sigma ,(\omega _1)}(\rr d).
\end{equation}
\end{thm}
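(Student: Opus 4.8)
The plan is to reduce Theorem \ref{Thm:OpCont} to \cite[Theorem 3.1]{Toft19}, which is stated as the author indicates only for the Kohn-Nirenberg case (or at least for a single fixed quantization), by means of the calculus transfer result in Proposition \ref{Prop:CalculiTransfer}. The guiding observation is that the operator $\op _A(a)$ coincides with some $\op _0(b)$ for a symbol $b$ related to $a$ through the homeomorphism $e^{i\scal {AD_\xi}{D_x}}$, and that this homeomorphism acts tamely on the modulation spaces $M^{p,q}_{(\omega _0)}(\rr {2d})$. Concretely, by part (ii) of Proposition \ref{Prop:CalculiTransfer} with $A_1=A$ and $A_2=0$, we have $\op _A(a)=\op _0(b)$ exactly when $e^{i\scal {AD_\xi}{D_x}}a=b$, so I would \emph{define} $b\equiv e^{i\scal {AD_\xi}{D_x}}a$ and work with $b$ from there on.

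First I would record, via part (iii) of Proposition \ref{Prop:CalculiTransfer}, that $a\mapsto b$ is a homeomorphism from $M^{p,q}_{(\omega _0)}(\rr {2d})$ onto $M^{p,q}_{(\omega _{0,A})}(\rr {2d})$, where $\omega _{0,A}(x,\xi ,\eta ,y)=\omega _0(x+Ay,\xi +A^*\eta ,\eta ,y)$ is the shifted weight from the proposition. In particular
$$
\nm b{M^{p,q}_{(\omega _{0,A})}}\asymp \nm a{M^{p,q}_{(\omega _0)}}.
$$
Next I would check that the weight condition \eqref{Eq:WeightPseudoACond} for $(\omega _1,\omega _2,\omega _0,A)$ is precisely the $A=0$ (Kohn-Nirenberg) weight condition for the transferred data $(\omega _1,\omega _2,\omega _{0,A})$; this is a direct substitution, replacing the arguments of $\omega _0$ in \eqref{Eq:WeightPseudoACond} by those dictating $\omega _{0,A}$ and matching the change of variables $x-A(x-y)\mapsto x$, $\eta -A^*(\eta -\xi )\mapsto \eta$. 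I would then invoke \cite[Theorem 3.1]{Toft19} for the operator $\op _0(b)$ acting on $M^{p_1}_{\sigma ,(\omega _1)}(\rr d)$, under the unchanged exponent conditions \eqref{Eq:pqconditions}, to obtain continuity into $M^{p_2}_{\sigma ,(\omega _2)}(\rr d)$ together with the estimate $\nm {\op _0(b)f}{M^{p_2}_{\sigma ,(\omega _2)}}\lesssim \nm b{M^{p,q}_{(\omega _{0,A})}}\nm f{M^{p_1}_{\sigma ,(\omega _1)}}$. Combining this with the norm equivalence above, and with the identity $\op _A(a)=\op _0(b)$, yields \eqref{Eq:PsDOEst} and the asserted unique continuous extension from $\maclS _{1/2}(\rr d)$.

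The main obstacle I anticipate is purely bookkeeping rather than analytic: verifying that \eqref{Eq:WeightPseudoACond} really does transform into the correct $A=0$ weight condition under the substitution defining $\omega _{0,A}$, so that the hypotheses of \cite[Theorem 3.1]{Toft19} are met without any residual loss. One must track the four slots of $\omega _0$ carefully through the affine change of variables induced by $A$ and $A^*$, and confirm that the composition of the two maps in \eqref{homeoF2tmap} for $A$ and for $0$ produces exactly the intertwining $e^{i\scal {AD_\xi}{D_x}}$ rather than its inverse or adjoint. A secondary point to confirm is that the uniqueness of the extension transfers correctly: since $\Sigma _1(\rr d)$ is dense in $M^{p_1}_{\sigma ,(\omega _1)}(\rr d)$ whenever $\max (p_1)<\infty$, uniqueness is immediate there, and for the remaining cases with $p_1$ attaining $\infty$ one argues by the weak$^*$ continuity already built into the definition of $\op _A(a)$ on $\Sigma _1'$. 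Since the theorem is described as a \emph{slight extension} of \cite[Theorem 3.1]{Toft19}, I expect no new difficulty beyond this transfer-of-weights verification, and the author's remark that ``the details are left for the reader'' confirms that the analytic heavy lifting resides entirely in the cited theorem.
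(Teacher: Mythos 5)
Your proposal follows exactly the route the paper indicates: transfer $\op _A(a)$ to $\op _0(b)$ with $b=e^{i\scal {AD_\xi}{D_x}}a$ via Proposition \ref{Prop:CalculiTransfer}, verify that \eqref{Eq:WeightPseudoACond} becomes the $A=0$ condition \eqref{Eq:WeightPseudoA0Cond} for the shifted weight $\omega _{0,A}$ (your substitution $x+A(y-x)=x-A(x-y)$, $\eta +A^*(\xi -\eta )=\eta -A^*(\eta -\xi )$ checks out), and then invoke \cite[Theorem 3.1]{Toft19}. This is precisely the argument the paper leaves to the reader, so your write-up is correct and takes essentially the same approach.
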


\par

By choosing $\sigma$ as the identity map,
Theorem \ref{Thm:OpCont} gives continuity
properties for pseudo-differential operators acting
on modulation spaces of standard type, i.{\,}e.
acting between spaces of the form $M^{p,q}_{(\omega )}(\rr d)$,
where $p,q\in (0,\infty ]$ or $p,q\in (0,\infty ]^d$
(see e.{\,}g. \cite{Sjo,Tac,Toft2,Toft15} ).
If instead $\sigma$ equals $\sigma _2$ in \eqref{Eq:sigma2Def},
Theorem \ref{Thm:OpCont} gives continuity
properties for pseudo-differential operators acting
on Wiener amalgam type spaces of the form
$W^{p,q}_{(\omega )}(\rr d)$.
We remark that such continuity properties 
were also established in \cite{CorNic}.

\par

Since $W^{p,q}_{(\omega )}$ spaces are Fourier images
of $M^{p,q}_{(\omega )}$ spaces, any continuity
result valid for $M^{p,q}_{(\omega )}$ spaces can be
transformed into a continuity result for $W^{p,q}_{(\omega )}$
spaces (see e.{\,}g. \cite{BoDoOl} for ideas on such transitions).

\par

For $A=0$, i.{\,}e. the standard or Kohn-Nirenberg case,
\eqref{Eq:WeightPseudoACond} becomes
\begin{equation}\label{Eq:WeightPseudoA0Cond}
\frac {\omega _2(x,\xi  )}{\omega _1
(y,\eta )} \lesssim \omega _0( x,\eta ,
\xi -\eta ,y-x ),
\end{equation}
and Theorem \ref{Thm:OpCont} implies
\begin{alignat}{3}
\op _0(a)\, :\, M^{q',p'}_{(\omega _1)}(\rr d)
&\to & M^{p,q}_{(\omega _1)}(\rr d),
\qquad
a&\in M^{p,q}_{(\omega _0)}(\rr {2d}),& \ p,q &\in [1,\infty ],
\ q\le p.
\label{Eq:OpContSpec1}
\intertext{For slightly relaxed conditions on $p$ and
$q$ we also have}
\op _0(a)\, :\, M^{q',p'}_{(\omega _1)}(\rr d)
&\to & W^{p,q}_{(\omega _1)}(\rr d),
\qquad
a&\in W^{p,q}_{(\omega _0)}(\rr {2d}),& \ p,q &\in [1,\infty ],
\label{Eq:OpContSpec1Modif}
\end{alignat}
concerning mapping properties for
pseudo-differential operators with symbols
in Wiener amalgam type spaces $W^{p,q}_{(\omega _0)}(\rr {2d})$.
(See e.{\,}g. \cite[Theorem 3.9]{TeoTof}.)
We observe that the condition $q\le p$ in
\eqref{Eq:OpContSpec1} is removed in
\eqref{Eq:OpContSpec1Modif}.

\par

\subsection{Pseudo-differential operators of amplitude types}

\par

Let $a\in \Sigma _1(\rr {3d})$. Then the pseudo-differential
operator $\op (a)$ of \emph{amplitude type} with
\emph{amplitude} $a$ is the linear
and continuous operator from $\Sigma _1(\rr d)$ to
$\Sigma _1(\rr d)$, defined by
\begin{equation}\label{Eq:PseudoDiffAmplDef}
(\op (a)f)(x) = (2\pi )^{-d}\iint _{\rr {2d}}a(x,y,\zeta )f(y)
e^{i\scal {x-y}\zeta }\, dyd\zeta .
\end{equation}

\par

The definition of $\op (a)$ extends to more general $a$. For example,
in Section \ref{sec3} we observe that $\op (a)$ makes sense
when $a$ belongs to certain modulation spaces.

\par

Let $A\in \GL (d,\mathbf R)$ be fixed.
It is evident that the definition of $\op _A(a_0)$ is a special case
of $\op (a)$, since we may choose
$$
a(x,y,\zeta ) = a_0(x-A(x-y),\zeta ).
$$
On the other hand, it follows by Fourier inversion formula in
combination with kernel theorems for functions and distributions,
it follows that any continuous and linear operator from
$\Sigma _1(\rr d)$ to $\Sigma _1'(\rr d)$ is given by
$\op _A(a_0)$ for a unique $a_0\in \Sigma _1'(\rr {2d})$.
Consequently, the set of linear and continuous operators
from $\Sigma _1(\rr d)$ to $\Sigma _1'(\rr d)$ is not enlarged
when passing from operators of the form $\op _A(a_0)$
into the form $\op (a)$.

\par

In particular, if $a\in \Sigma _1(\rr {3d})$, then there is a unique
$a_0(\rr {2d})\in \Sigma _1'(\rr {2d})$ such that $\op (a)=\op _0(a_0)$.
By straight-forward computations it follows that
\begin{align}
\op _0(a_0)&=\op (a)
\label{Eq:aToa0A}
\intertext{when}
a_0(x,\zeta ) &= (e^{i\scal {D_\zeta}{D_y}}a)(x,y,\zeta ) \Big \vert _{y=x}
=
(e^{i\scal {D_\zeta}{D_y}}a)(x,x+y,\zeta ) \Big \vert _{y=0}
\label{Eq:aToa0B}
\end{align}
(see e.{\,}g. \cite{Ho1}).

\par

\section{Trace properties of modulation spaces}\label{sec2}

\par

In this section we deduce continuity properties of trace mappings
on modulation spaces. Especially we extend such mapping properties
to modulation spaces with general moderate weights.

\par

More precisely, for any fixed $z\in \rr{d_2}$, we consider
continuity of the trace function map which takes a suitable function
or distribution $f(x_1,x_2,x_3)$ into 
\begin{equation}\label{Eq:TraceMap}
(\Trm _{z}f)(x_1,x_3) \equiv f(x_1,z,x_3),
\qquad x_1\in \rr {d_1},\ x_3\in \rr {d_3}.
\end{equation}
Here $z\in \rr {d_2}$ is fixed and $x_j\in \rr {d_j}$ are variables, $j=1,2,3$.
By straight-forward computations it follows that $\Trm _{z}$ is
a linear and continuous map from $C^\infty (\rr {d_1+d_2+d_3})$ to 
$C^\infty (\rr {d_1+d_3})$, and that similar fact holds with $\Sigma _s$,
$\maclS _s$ or $\mascS$ in place of $C^\infty$ at each occurrence.

\par

\begin{rem}\label{Rem:TraceOpSTFT}
Let $d=d_1+d_2+d_3$, $z\in \rr {d_2}$ be fixed, $x_j\in \rr {d_j}$ for $j=1,2,3$,
$$
f\in \Sigma _1(\rr {d})
\quad \text{and}\quad
g_z(x_0)=f(x_1,z,x_3),
\quad x_0=(x_1,x_3).
$$
Also let $d_0=d_1+d_3$,
$$
\phi _0\in \Sigma _1(\rr {d_0})\setminus 0,
\quad
\phi _2\in \Sigma _1(\rr {d_2})\setminus 0
\quad \text{and}\quad
\phi (x_1,x_2,x_3)= \phi _0(x_0)\phi _2(x_2).
$$
If $\xi =(\xi _1,\xi _2,\xi _3)\in \rr d$, $\xi _j\in \rr {d_j}$
and $\xi _0=(\xi _1,\xi _3)$, then
\begin{align}
(V_{\phi _0}g_z)(x_0,\xi _0)
&=
(2\pi )^{-\frac {d_2}2}\nm {\phi _2}{L^2}^{-2}
\int _{\rr {2d_2}} V_{\phi _0} f_Y(x_0,\xi _0)
\phi _2(z-y)e^{-i\scal z\eta} \, dY ,
\label{Eq:STFTTrace}
\intertext{where}
f_Y(x_0)
&=
(V_{\phi _2}(f(x_1,\cdo ,x_3)))(y,\eta ),\qquad Y=(y,\eta )\in \rr {2d_2}.
\label{Eq:STFTTrace2}
\end{align}
In fact, \eqref{Eq:STFTTrace} follows by first evaluating the integral with
respect to $\eta$ and using Parseval's formula, and thereafter integrate with
respect to $y$.
For future references we notice that \eqref{Eq:STFTTrace} is the same as
\begin{multline}\tag*{(\ref{Eq:STFTTrace})$'$}
(V_{\phi _0}g_z)(x_0,\xi _0)
\\[1ex]
=
(2\pi )^{-\frac {d_2}2}\nm {\phi _2}{L^2}^{-2}
\iint _{\rr {2d_2}} V_{\phi} f(x_1,y,x_3,\xi _1,\eta ,\xi _3)
\phi _2(z-y)e^{-i\scal z\eta} \, dyd\eta .
\end{multline}
\end{rem}

\par

For modulation spaces we have the following trace result.
Here the involved Lebesgue exponents and weight functions should satisfy
conditions of the form
\begin{alignat}{1}
p_0 = (p_1,p_3), \quad p &= (p_1,p_2,p_3),  \quad q_0 = (q_1,q_3),  \quad
q = (q_1,q_2,q_3),
\label{Eq:CondTraceLeb}
\\[1ex]
\max \left ( \frac 1{p_0},\frac 1{q_1},\frac 1{q_2},1\right )
-\frac 1{q_2}
&\le \frac 1r,
\label{Eq:CondTraceLebr1}
\end{alignat}
\begin{equation}\label{Eq:CondTraceWeight}
\sup _{\xi _3\in \rr {d_3}}
\left (
\NM {\sup _{(x,\xi _1)\in \rr {d+d_1}}
\left (
\frac {\omega _0(x_0,\xi _0)e^{-r_0|x_2|}}
{\omega (x,\xi _1,\cdo ,\xi _3)}
\right )
}{L^{r}(\rr {d_2})}
\right )
< \infty
\end{equation}
and
\begin{equation}\label{Eq:CondTraceWeight1A}
\omega (x,\xi )
\lesssim
\omega _0(x_0,\xi _0)e^{r_0(|x_2|+|\xi _2|)}.
\end{equation}
Here
\begin{equation}\label{Eq:CoordAbbreviation}
\begin{alignedat}{2}
x &=(x_1,x_2,x_3)\in \rr {d_1}\times \rr {d_2}\times \rr {d_3}, &
\quad
x_0 &=(x_1,x_3)
\\[1ex]
\xi &= (\xi _1,\xi _2,\xi _3)\in \rr {d_1}\times
\rr {d_2}\times \rr {d_3} &
\quad \text{and}\quad
\xi _0 &=(\xi _1,\xi _3).
\end{alignedat}
\end{equation}
We observe that \eqref{Eq:CondTraceWeight} is the same
as
\begin{align}
\omega _0(x_0,\xi _0)e^{-r_0|x_2|}
&\lesssim 
\omega (x,\xi )\vartheta (\xi _2,\xi _3),
\qquad
C_\vartheta \equiv
\sup _{\xi _3\in \rr {d_3}}\nm {\vartheta (\cdo ,\xi _3)}{L^{r}}<\infty ,
\label{Eq:CondTraceWeightModif}
\intertext{where $\vartheta \in \mascP _E(\rr {d_2+d_3})$
is given by}
\vartheta (\xi _2,\xi _3)
&\equiv
\sup _{x\in \rr d}\left (\sup _{\xi _1\in \rr {d_1}}
\left (
\frac {\omega _0(x_0,\xi _0)e^{-r_0|x_2|}}
{\omega (x,\xi )}
\right )
\right ),
\label{Eq:varthetaDef}
\end{align}
which indicates similarities between the conditions
\eqref{Eq:CondTraceWeight} and \eqref{Eq:CondTraceWeight1A}. We observe

\par

\begin{thm}\label{Thm:TraceMod}
Let $d_j\ge 0$ be integers, $z\in \rr {d_2}$ be fixed,
$p_j,q_j\in (0,\infty ]^{d_j}$, $r\in (0,\infty ]^{d_2}$, 
$j=1,2,3$,
be such that \eqref{Eq:CondTraceLebr1} holds
and let
$\omega \in \mascP _E(\rr {2(d_1+d_2+d_3)})$,
$\omega _0 \in \mascP _E(\rr {2(d_1+d_3)})$,
$p$, $p_0$, $q$ and $q_0$
be such that \eqref{Eq:CondTraceLeb} and \eqref{Eq:CondTraceWeight}
hold true for some $r_0\ge 0$.
Then the following is true:
\begin{enumerate}
\item
the map $\Trm _{z}$ from
$\Sigma _1  (\rr {d_1+d_2+d_3})$ to
$\Sigma _1(\rr {d_1+d_3})$ extends uniquely to a continuous map from
$M^{p,q}_{(\omega )}(\rr {d_1+d_2+d_3})$ to
$M^{p_0,q_0}_{(\omega _0)}(\rr {d_1+d_3})$, and
\begin{alignat}{2}
\nm {\Trm _{z}f}{M^{p_0,q_0}_{(\omega _0)}}
& \lesssim &
\nm f{M^{p,q}_{(\omega )}},
\qquad
f &\in M^{p,q}_{(\omega )}(\rr {d_1+d_2+d_3})
\label{Eq:TraceModEst}
\end{alignat}

\vrum

\item if in addition \eqref{Eq:CondTraceWeight1A} holds true
for some $r_0\ge 0$,
then the map in {\rm{(1)}} a surjective.
\end{enumerate}
\end{thm}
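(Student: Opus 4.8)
The plan is to make the short-time Fourier transform identity \eqref{Eq:STFTTrace}$'$ do all the work: it reduces $\Trm_z$ applied to $f$ to an integration of $V_\phi f$ over the two variables $(y,\eta)$ playing the roles of $(x_2,\xi_2)$, against the fixed Gelfand--Shilov factor $\phi_2(z-y)e^{-i\scal z\eta}$. First I would prove the quasi-norm estimate \eqref{Eq:TraceModEst} for $f\in\Sigma_1(\rr{d_1+d_2+d_3})$, where $\Trm_z f$ is literally a restriction and no extension issue arises. Taking absolute values in \eqref{Eq:STFTTrace}$'$, multiplying by $\omega_0(x_0,\xi_0)$, and inserting the weight reformulation \eqref{Eq:CondTraceWeightModif} replaces $\omega_0(x_0,\xi_0)e^{-r_0|x_2|}$ under the integral by $\omega(x,\xi)\vartheta(\xi_2,\xi_3)$. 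The residual factor $e^{r_0|x_2|}=e^{r_0|y|}$ is then harmless, being dominated by the rapid decay of $\phi_2(z-y)$, so that $\Phi(y)\equiv e^{r_0|y|}|\phi_2(z-y)|$ is a fixed rapidly decreasing function. What remains is to bound the mixed $L^{p_0,q_0}$-quasi-norm of a double integral of $G(x,\xi)\equiv|V_\phi f(x,\xi)|\omega(x,\xi)$ against $\Phi$ in $y=x_2$ and against $\vartheta(\eta,\xi_3)$ in $\eta=\xi_2$.

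Controlling this double integral is the heart of the matter, and the step I expect to be the main obstacle, because in the quasi-Banach regime $\min(p,q)<1$ Minkowski's integral inequality is unavailable. Following the route indicated in the introduction, I would discretize through the Gabor frame of Proposition \ref{Prop:ConseqThmS}, so that, with $d=d_1+d_2+d_3$, the norms $\nm{f}{M^{p,q}_{(\omega)}}$ and $\nm{\Trm_z f}{M^{p_0,q_0}_{(\omega_0)}}$ become equivalent to weighted mixed sequence-space quasi-norms over the lattices $\ep\zz{2d}$ and $\ep\zz{2(d_1+d_3)}$. On the discrete side the $y$-integration turns into a convolution in the $x_2$-lattice against the rapidly decaying sample of $\Phi$, while the genuinely critical operation is the $\eta$-summation: the $\xi_2$-direction is summed out against $\vartheta(\cdot,\xi_3)$, whose admissible size is measured exactly by the bound $C_\vartheta$ in \eqref{Eq:CondTraceWeightModif}. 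The Lebesgue condition \eqref{Eq:CondTraceLebr1} is precisely the Young-type exponent relation guaranteeing that these weighted discrete convolutions are bounded; I would apply the convolution inequalities for weighted mixed $\ell^{p,q}$-spaces (the discrete analogues of \eqref{Eq:YoungSpec} and \eqref{Eq:ConvExp}) one direction at a time, calling on the $r$-triangle inequality whenever an exponent drops below $1$. This produces \eqref{Eq:TraceModEst} on $\Sigma_1$ with an implied constant depending only on $C_\vartheta$, $\phi_2$ and $r_0$.

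With the estimate secured on the dense subspace, I would extend $\Trm_z$ in stages. When $\max(p,q)<\infty$, density of $\Sigma_1$ (Proposition \ref{Prop:ModSpacesBasicProp}(1)) yields a unique continuous extension immediately. The endpoints involving $\infty$ need more care: in the Banach range $p,q\in[1,\infty]$ I would use the duality of Proposition \ref{Prop:ModSpacesBasicProp}(4) together with Hahn--Banach to define the extension, and handle the critical $p=\infty$ (or $q=\infty$) cases by narrow convergence, exactly as in \cite{Sjo,Toft2}, which is weak enough to approximate arbitrary elements yet strong enough to force uniqueness. The remaining quasi-Banach cases I would reduce to a Banach target by the monotonicity $M^{p,q}_{(\omega)}\hookrightarrow M^{p_1,q_1}_{(\omega)}$ of Proposition \ref{Prop:ModSpacesBasicProp}(3), and pin down uniqueness through the commutative diagram \eqref{Eq:commdiagram}, whose lower trace map on $M^{1,1}$ is already unique. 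This settles part (1).

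For the surjectivity in part (2), the added hypothesis \eqref{Eq:CondTraceWeight1A} provides the reverse weight inequality needed for a bounded right inverse. Given $f_0\in M^{p_0,q_0}_{(\omega_0)}(\rr{d_1+d_3})$, I would fix $\psi\in\Sigma_1(\rr{d_2})$ with $\psi(z)=1$ and set $f=f_0\otimes\psi$; taking $\phi=\phi_0\otimes\phi_2$ the transform factors as $V_\phi f=V_{\phi_0}f_0\cdot V_{\phi_2}\psi$, and since the mixed norm of a product separated in the variables $(x_0,\xi_0)$ and $(x_2,\xi_2)$ factorizes, \eqref{Eq:CondTraceWeight1A} bounds $\omega(x,\xi)$ by $\omega_0(x_0,\xi_0)e^{r_0(|x_2|+|\xi_2|)}$, the exponential being absorbed into the Gelfand--Shilov decay of $V_{\phi_2}\psi$. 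Hence $\nm{f}{M^{p,q}_{(\omega)}}\lesssim\nm{f_0}{M^{p_0,q_0}_{(\omega_0)}}$, so $f_0\mapsto f_0\otimes\psi$ is bounded into $M^{p,q}_{(\omega)}$. As $\Trm_z(f_0\otimes\psi)=\psi(z)f_0=f_0$ holds for $f_0\in\Sigma_1$ and both maps are continuous, the composition $\Trm_z\circ(\cdot\otimes\psi)$ is the identity on all of $M^{p_0,q_0}_{(\omega_0)}$ (extending to the endpoint cases again via the embeddings and diagram \eqref{Eq:commdiagram}), and therefore $\Trm_z$ is onto.
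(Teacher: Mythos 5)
Your proposal is correct and follows essentially the same route as the paper: Gabor discretization via Proposition \ref{Prop:ConseqThmS}, a pointwise domination of the discretized short-time Fourier transform of $\Trm_z f$ involving $\vartheta$, iterated weighted discrete Young/H{\"o}lder estimates with the $\rho$-power trick (where \eqref{Eq:CondTraceLebr1} is exactly what makes the H{\"o}lder step against $\vartheta(\cdot,\lambda_3)$ close via $C_\vartheta$), then density, Hahn--Banach and the embedding diagram \eqref{Eq:commdiagram} for the extension and its uniqueness, and the tensor-product right inverse $f_0\mapsto f_0\otimes\fy(\cdot-z)$ for surjectivity. The only cosmetic deviations are that you enter through the continuous identity \eqref{Eq:STFTTrace}$'$ before discretizing, whereas the paper restricts the Gabor expansion directly, and that the paper's uniqueness argument at the $\infty$-endpoints relies on the $M^{1,1}_{(\omega_1)}$ embedding rather than narrow convergence.
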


\par

For modulation spaces of Wiener-amalgam types
we also have the following. Here the conditions
\eqref{Eq:CondTraceLebr1} and 
\eqref{Eq:CondTraceWeight} are relaxed into
\begin{equation}
\max \left ( \frac 1{q_1},\frac 1{q_2},1\right )
-\frac 1{q_2}
\le \frac 1r,
\label{Eq:CondTraceLebr2}
\end{equation}
and
\begin{equation}\tag*{(\ref{Eq:CondTraceWeight})$'$}
\sup _{(x,\xi _3)\in \rr {d+d_3}}
\left (
\NM {\sup _{\xi _1\in \rr {d_1}}
\left (
\frac {\omega _0(x_1,x_3,\xi _1,\xi _3)e^{-r_0|x_2|}}
{\omega (x_1,x_2,x_3,\xi _1,\cdo ,\xi _3)}
\right )
}{L^{r}(\rr {d_2})}
\right )
< \infty .
\end{equation}

\par

\begin{thm}\label{Thm:TraceMod2}
Let $d_j\ge 0$ be integers, $z\in \rr {d_2}$ be fixed,
$p_j,q_j\in (0,\infty ]^{d_j}$, $r\in (0,\infty ]^{d_2}$, 
$j=1,2,3$,
be such that \eqref{Eq:CondTraceLebr2} holds
and let
$\omega \in \mascP _E(\rr {2(d_1+d_2+d_3)})$,
$\omega _0 \in \mascP _E(\rr {2(d_1+d_3)})$,
$p$, $p_0$, $q$ and $q_0$
be such that \eqref{Eq:CondTraceLeb} and \eqref{Eq:CondTraceWeight}$'$
hold true for some $r_0\ge 0$.
Then the following is true:
\begin{enumerate}
\item
the map $\Trm _{z}$ from
$\Sigma _1  (\rr {d_1+d_2+d_3})$ to
$\Sigma _1(\rr {d_1+d_3})$ extends uniquely to a continuous map from
$W^{p,q}_{(\omega )}(\rr {d_1+d_2+d_3})$ to
$W^{p_0,q_0}_{(\omega _0)}(\rr {d_1+d_3})$, and
\begin{alignat}{2}
\nm {\Trm _{z}f}{W^{p_0,q_0}_{(\omega _0)}}
& \lesssim &
\nm f{W^{p,q}_{(\omega )}},
\qquad
f &\in W^{p,q}_{(\omega )}(\rr {d_1+d_2+d_3}) \text ;
\end{alignat}

\vrum

\item if in addition \eqref{Eq:CondTraceWeight1A} holds true
for some $r_0\ge 0$,
then the map in {\rm{(1)}} is surjective.
\end{enumerate}
\end{thm}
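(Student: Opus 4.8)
The plan is to adapt the proof of Theorem \ref{Thm:TraceMod} to the Wiener-amalgam setting, the only structural change being that in $W^{p,q}_{(\omega )}$ the spatial $L^{p}$-norm is the outermost one instead of the frequency $L^{q}$-norm. I would \emph{not} try to deduce Theorem \ref{Thm:TraceMod2} from Theorem \ref{Thm:TraceMod} by Fourier conjugation: although $W^{p,q}_{(\omega )}$ spaces are Fourier images of modulation spaces (Proposition \ref{Prop:ModSpacesBasicProp}\,(5)), the operator $\mathscr F\circ \Trm _z\circ \mathscr F^{-1}$ is a partial integration (marginalization) in the $\xi _2$-variable rather than a trace, so that conjugation produces a genuinely different theorem. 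Hence the proof has to be run in parallel.

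First I would reduce everything to a single quasi-norm estimate on $\Sigma _1$. For $f\in \Sigma _1(\rr {d_1+d_2+d_3})$ I use Remark \ref{Rem:TraceOpSTFT}, in the form \eqref{Eq:STFTTrace}$'$ with $\phi =\phi _0\otimes \phi _2$, to bound the short-time Fourier transform of $g_z=\Trm _z f$ pointwise by
$$
|(V_{\phi _0}g_z)(x_0,\xi _0)|
\lesssim
\iint _{\rr {2d_2}}|(V_\phi f)(x_1,y,x_3,\xi _1,\eta ,\xi _3)|\,
|\phi _2(z-y)|\, dy\, d\eta .
$$
Here the $y$-integration (the dropped spatial variable $x_2$) is harmless since $\phi _2\in \Sigma _1$ is rapidly decaying, while the $\eta$-integration (the dropped frequency variable $\xi _2$) carries no decay kernel and must be absorbed by the weight. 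Applying the Wiener-amalgam mixed quasi-norm $L^{p_0,q_0}_{*,(\omega _0)}$ to this bound, and using the weight condition \eqref{Eq:CondTraceWeight}$'$ together with the Lebesgue condition \eqref{Eq:CondTraceLebr2}, the $\eta$-integral is controlled by the $L^{r}(\rr {d_2})$-factor in \eqref{Eq:CondTraceWeight}$'$ via a weighted Young/H{\"o}lder estimate. I expect this step to be the main obstacle: in the quasi-Banach range $p,q<1$ the interchange of the outer $L^{p_0}$-norm with the inner integrations is no longer Minkowski's inequality, and one must instead invoke the convolution estimates for weighted discrete Lebesgue spaces underlying Proposition \ref{Prop:ConseqThmS}, passing through a Gabor expansion of $f$. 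The reason the Lebesgue condition here is \eqref{Eq:CondTraceLebr2} rather than the stronger \eqref{Eq:CondTraceLebr1} is precisely that, with the spatial norm outermost, the $x_2$-integration is absorbed directly into $L^{p_0}$ and imposes no constraint linking $p_0$ to $r$.

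With the estimate $\nm {\Trm _z f}{W^{p_0,q_0}_{(\omega _0)}}\lesssim \nm f{W^{p,q}_{(\omega )}}$ secured on the dense subspace $\Sigma _1$, the extension to all of $W^{p,q}_{(\omega )}$ follows the same scheme as for Theorem \ref{Thm:TraceMod}, all stages of which are available for $W$-spaces by the last sentence of Proposition \ref{Prop:ModSpacesBasicProp}: the Banach range $p,q\ge 1$ by density (for $\max (p,q)<\infty$) or by duality and Hahn--Banach otherwise; the critical cases $p=\infty$ by narrow convergence; and the general range $p,q\in (0,\infty ]$ by the monotonicity of $W^{p,q}_{(\omega )}$ in $p$ and $q$ and an embedding diagram of the type \eqref{Eq:commdiagram}, which also yields uniqueness of the extension.

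Finally, for the surjectivity in part (2) I would produce an explicit preimage of tensor form. Given $g\in W^{p_0,q_0}_{(\omega _0)}(\rr {d_1+d_3})$, set $f=g\otimes \psi$ with $\psi \in \Sigma _1(\rr {d_2})$ chosen so that $\psi (z)=1$; then $\Trm _z f=g$. Choosing $\phi =\phi _0\otimes \phi _2$ gives $V_\phi f=(V_{\phi _0}g)\otimes (V_{\phi _2}\psi )$, and the additional weight condition \eqref{Eq:CondTraceWeight1A} bounds $\omega$ by $\omega _0(x_0,\xi _0)e^{r_0(|x_2|+|\xi _2|)}$. Since $\psi \in \Sigma _1$ forces $V_{\phi _2}\psi$ to decay like $e^{-r(|x_2|+|\xi _2|)}$ for every $r>0$ (Lemma \ref{Lemma:GSFourierest}), one may take $r>r_0$ so that the $(x_2,\xi _2)$-factor lies in every mixed Lebesgue space; the Wiener-amalgam quasi-norm then factorizes and gives $\nm f{W^{p,q}_{(\omega )}}\lesssim \nm g{W^{p_0,q_0}_{(\omega _0)}}$, proving surjectivity.
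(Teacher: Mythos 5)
Your proposal follows essentially the same route as the paper: the paper proves Theorem \ref{Thm:TraceMod2} by rerunning the Gabor-expansion estimate from the proof of Theorem \ref{Thm:TraceMod} with the mixed quasi-norms applied in the Wiener-amalgam order (frequency norms first), taking $\rho=\min(q_1,q_2,1)$ and performing the H\"older step against $\vartheta$ in the dropped frequency variable \emph{before} any spatial norm is applied --- which is the precise reason \eqref{Eq:CondTraceLebr2} replaces \eqref{Eq:CondTraceLebr1}, rather than the $x_2$-integration being ``absorbed into $L^{p_0}$'' as you put it --- and then extends by density/Hahn--Banach/embeddings and proves surjectivity by the same tensor-product preimage you describe. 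No gap.
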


\par

For the proof of Theorems \ref{Thm:TraceMod} and \ref{Thm:TraceMod2}
we recall the Young type inequality
\begin{equation}\label{Eq:YoungSpec}
\nm {f_1*f_2}{\ell ^p_{(\omega )}}
\le
\nm {f_1}{\ell ^p_{(\omega )}}\nm {f_2}{\ell ^{\min (p,1)}_{(v)}},
\qquad
f_1\in \ell ^p_{(\omega )}(\ep \zz d),\ f_2\in \ell ^{\min (p,1)}_{(\omega )}(\ep \zz d)
\end{equation}
for discrete Lebesgue spaces, when $\omega ,v\in \mascP _E(\rr d)$ satisfy
$\omega (x+y)\le \omega (x)v(y)$. This gives
\begin{equation}\label{Eq:ConvExp}
\nm {f*e^{-r|\cdo |}}{\ell ^p_{(\omega )}(\ep \zz d)}
\lesssim
\nm {f}{\ell ^p_{(\omega )}(\ep \zz d)},
\qquad
f\in \ell ^p_{(\omega )}(\ep \zz d),
\end{equation}
provided $r>0$ is chosen large enough.

\par

\begin{rem}\label{Rem:pNorms}
Let $p\in (0,1]$, $\mascB$ be a vector space and let
$\nm \cdo \mascB$ be a $p$-norm, i.{\,}e. $\nm \cdo \mascB$
is a quasi-norm on $\mascB$ which fulfills
\begin{alignat*}{2}
\nm {f+g}{\mascB}^p
&\le
\nm {f}{\mascB}^p + \nm {g}{\mascB}^p,
&\qquad f,g &\in \mascB .
\intertext{Then it follows that}
\nm {f+g}{\mascB}^q
&\le
\nm {f}{\mascB}^q + \nm {g}{\mascB}^q,
&\qquad f,g &\in \mascB ,
\end{alignat*}
for every $q\in (0,p]$.
\end{rem}

\par

\begin{proof}[Proof of Theorem \ref{Thm:TraceMod}]
First we prove (1).
Let $d=d_1+d_2+d_3$, $d_0=d_1+d_3$, $\rho = \min (p_0,q_1,q_2,1)$,
$\rho _0=\min (p,q,1)=\min (\rho ,q_3)$
and $g_z=\Trm _{z}f$ and $v\in \mascP _E(\rr {2d})$ be such that
$\omega$ is $v$-moderate. Also let $x$, $\xi$, $x_0$, $\xi _0$ and
$\vartheta$ be given by \eqref{Eq:CoordAbbreviation} and
\eqref{Eq:varthetaDef}.
Then $\vartheta \in \mascP _E (\rr {d_2+d_3})$ fulfills
\eqref{Eq:CondTraceWeightModif}.

\par

First suppose that $f\in \Sigma _1(\rr d)$.
By Proposition \ref{Prop:ConseqThmS}, there are
$\phi _m\in M^{\rho _0}_{(v)}(\rr d)$, $\psi _m\in \Sigma _1(\rr {d_m})$,
$m=1,2,3$ such that
\begin{align}
f(x)
&=
c_\ep \sum _{j,\iota \in \ep \zz {d}}
V_\phi f(j,\iota )e^{i\scal {x}{\iota}}\psi (x-j),
\notag
\intertext{where}
\phi &= \phi _1\otimes \phi _2\otimes \phi _3
\qquad \text{and}\qquad
\psi =\psi _1\otimes \psi _2\otimes \psi _3.
\notag
\intertext{We have}
g_z(x_1,x_3)
&=
c_\ep \sum _{j,\iota \in \ep \zz {d}}
V_\phi f(j,\iota )e^{i(\scal {x_1}{\iota _1}+\scal {z}{\iota _2}+\scal {x_3}{\iota _3})}
\psi (x_1-j_1,z-j_2,x_3-j_3),
\label{Eq:TraceDist}
\intertext{when}
g_z(x_1,x_3) &= f(x_1,z,x_3),
\end{align}
and observe that the series possess strong convergence properties,
because
$$
|V_\phi f(j,\iota )| \lesssim e^{-r_1(|j|+|\iota |)}
\quad \text{and}\quad
|\psi (x_1-j_1,z-j_2,x_3-j_3)|
\lesssim
e^{-r_1(|x_1-j_1|+|j_2|+|x_3-j_3|)}
$$
for every $r_1>0$.

\par

An application of the short-time Fourier transform
on \eqref{Eq:TraceDist} gives
\begin{multline}\label{Eq:STFTTraceMap}
V_{\psi _0}g_z(x_0,\xi _0) 
\\[1ex]
=
c_\ep \sum _{j,\iota \in \ep \zz {d}}
V_\phi f(j,\iota )e^{i(\scal {z}{\iota _2} +\scal {j_0}{\iota _0-\xi _0})}
(V_{\psi _0}\psi _0)(x_0-j_0,\xi _0-\iota _0)\psi _2(z-j_2),
\end{multline}
where
$$
j_0=(j_1,j_3),\quad \iota _0=(\iota _1,\iota _3)
\quad \text{and}\quad
\psi _0=\psi _1\otimes \psi _3.
$$
By letting
$$
F _\omega = |V_\phi f \cdot \omega |
\quad \text{and}\quad
F _{0,\omega _0} = |V_{\psi _0} g_z \cdot \omega _0|,
$$
and using that
\begin{equation*}
\omega _0(x_0,\xi _0) \lesssim \omega (x_1,j_2,x_3,\xi _1,\iota _2,\xi _3)
e^{r_0|j_2|}\vartheta (\iota _2,\lambda _3)
\\[1ex]
\lesssim
\omega (j,\iota )e^{r_0(|x_0-j_0|+|j_2|+|\xi _0-\iota _0|)}\vartheta (\iota _2,\lambda _3).
\end{equation*}
and
$$
|(V_{\psi _0}\psi _0)(x_0-j_0,\xi _0-\iota _0)\psi _2(z-j_2)
e^{r_0(|x_0-j_0|+|j_2|+|\xi _0-\iota _0|)}|
\lesssim
e^{-r_1(|x_0-j_0|+|j_2|+|\xi _0-\iota _0|)}
$$
for every $r_1>0$, it follows from \eqref{Eq:STFTTraceMap} that
\begin{equation}\label{Eq:TraceMapBasicModEst}
F _{0,\omega _0}(l_0,\lambda _0)
\lesssim
\sum _{j,\iota \in \ep \zz {d}} 
F _\omega (j,\iota ) e^{-r_1(|l_0-j_0|+|j_2|+|\lambda _0-\iota _0|)}\vartheta (\iota _2,\lambda _3),
\qquad r_1>0,
\end{equation}
where
$$
l_0=(l_1,l_3)\in \ep \zz {d_0}
\quad \text{and} \quad
\lambda _0=(\lambda _1,\lambda _3)\in \ep \zz {d_0}.
$$
Since $\rho \in (0,1]$, \eqref{Eq:TraceMapBasicModEst} gives
\begin{equation}\tag*{(\ref{Eq:TraceMapBasicModEst})$'$}
F _{0,\omega _0}(l_0,\lambda _0)^\rho
\lesssim
\sum _{j,\iota \in \ep \zz {d}} 
F _\omega (j,\iota )^\rho e^{-r_1(|l_0-j_0|+|j_2|+|\lambda _0-\iota _0|)}
\vartheta (\iota _2,\lambda _3)^\rho ,
\qquad r_1>0.
\end{equation}

\par

Now let
\begin{gather*}
\Lambda _1 = \ep \zz {d_2+d_3+d},\quad
\Lambda _2 = \ep \zz {d_3+d}, \quad
\Lambda _3 = \ep \zz {d_2+d_3},
\\[1ex]
F_{p_1,\omega}(x_2,x_3,\xi )
=
\nm {F_\omega (\cdo ,x_2,x_3,\xi )}{\ell ^{p_1}(\ep \zz {d_1})},
\quad
F_{p_1,p_2,\omega}(x_3,\xi )
=
\nm {F_\omega (\cdo ,x_3,\xi )}{\ell ^{p_1,p_2}(\ep \zz {d_1+d_2})},
\\[1ex]
F_{p,\omega}(\xi )
=
\nm {F_\omega (\cdo ,\xi )}{\ell ^{p}(\ep \zz {d})},
\quad
F_{p,q_1,\omega}(\xi _2,\xi _3)
=
\nm {F_\omega (\cdo ,\xi _2,\xi _3)}{\ell ^{p,q_1}(\ep \zz {d+d_1})},
\\[1ex]
F_{p,q_1q_2,\omega}(\xi _3)
=
\nm {F_\omega (\cdo ,\xi _3)}{\ell ^{p,q_1,q_2}(\ep \zz {d+d_1+d_2})}
\intertext{and}
G_{r_1,\rho ,\omega}(l_1,j_2,j_3,\iota )
=
(F_\omega (\cdo ,j_2,j_3,\iota )^\rho * e^{-r_1|\cdo |})(l_1),
\end{gather*}
where the convolution is the discrete convolution with
respect to $\ep \zz {d_1}$.
By \eqref{Eq:TraceMapBasicModEst}$'$ it follows that
$$
F _{0,\omega _0}(l_0,\lambda _0)^\rho
\lesssim
\sum _{(j_2,j_3,\iota )\in \Lambda _1} 
(F _\omega (\cdo ,j_2,j_3,\iota )^\rho *
e^{-r_1|\cdo |})(l_1)e^{-r_1(|j_2|+|l_3-j_3|+|\lambda _0-\iota 
_0|)}\vartheta (\iota _2,\lambda _3)^\rho ,
$$
for every $r_1>0$.
If we apply the $\ell ^{p_1/\rho}$ norm with respect to $l_1$
variable, and use
Minkowski's, Young's and H{\"o}lder's inequalities, we obtain
\begin{multline*}
\nm {F _{0,\omega _0}(\cdo , l_3,\lambda _0)^\rho}{\ell ^{p_1/\rho}}
=
\nm {F _{0,\omega _0}(\cdo , l_3,\lambda _0)}{\ell ^{p_1}}^\rho
\\[1ex]
\lesssim
\NM {\sum _{(j_2,j_3,\iota )\in \Lambda _1} 
G_{r_1,\rho ,\omega}(\cdo ,j_2,j_3,\iota )
e^{-r_1(|j_2|+|l_3-j_3|+|\lambda _0-\iota _0|)}
\vartheta (\iota _2,\lambda _3)^\rho}{\ell ^{p_1/\rho}}
\\[1ex]
\lesssim
\sum _{(j_2,j_3,\iota )\in \Lambda _1}
\left (
\nm {F _\omega (\cdo ,j_2,j_3,\iota )^\rho *
e^{-r_1|\cdo |}}{\ell ^{p_1/\rho}}
\right )
e^{-r_1(|j_2|+|l_3-j_3|+|\lambda _0-\iota _0|)}
\vartheta (\iota _2,\lambda _3)^\rho
\\[1ex]
\lesssim
\sum _{(j_2,j_3,\iota )\in \Lambda _1}
\left (
\nm {F _\omega (\cdo ,j_2,j_3,\iota )^\rho}{\ell ^{p_1/\rho}}
\right )
e^{-r_1(|j_2|+|l_3-j_3|+|\lambda _0-\iota _0|)}
\vartheta (\iota _2,\lambda _3)^\rho
\\[1ex]
=
\sum _{(j_2,j_3,\iota )\in \Lambda _1}
F _{p_1,\omega} (j_2,j_3,\iota )^\rho
e^{-r_1(|j_2|+|l_3-j_3|+|\lambda _0-\iota _0|)}
\vartheta (\iota _2,\lambda _3)^\rho
\\[1ex]
\le
\sum _{(j_3,\iota )\in \Lambda _2}
\nm {F _{p_1,\omega} (\cdo ,j_3,\iota )^\rho}
{\ell ^{\infty}}
\nm {e^{-r_1|\cdo |}}{\ell ^1}
e^{-r_1(|l_3-j_3|+|\lambda _0-\iota _0|)}
\vartheta (\iota _2,\lambda _3)^\rho
\\[1ex]
\lesssim
\sum _{(j_3,\iota )\in \Lambda _2}
F _{p_1,p_2,\omega} (j_3,\iota )^\rho
e^{-r_1(|l_3-j_3|+|\lambda _0-\iota _0|)}
\vartheta (\iota _2,\lambda _3)^\rho ,
\end{multline*}
for every $r_1>0$. In the last inequality
we have used the fact that $\nm f{\ell ^\infty
(\ep \zz {d_2})} \le \nm f{\ell ^p(\ep \zz {d_2})}$
for every sequence $f$ on $\ep \zz {d_2}$
and $p\in (0,\infty ]$. Hence, if
$$
G_{p_1,p_2,r_1,\rho ,\omega}(l_3,\iota )
=
(F_{p_1,p_2,\omega} (\cdo ,\iota )^\rho * e^{-r_1|\cdo |})(l_3),
$$
where the convolution is the discrete convolution with
respect to $\ep \zz {d_3}$, we get
$$
\nm {F _{0,\omega _0}(\cdo , l_3,\lambda _0)}{\ell ^{p_1}}^\rho
\lesssim
\sum _{\iota \in \ep \zz d}
G_{p_1,p_2,r_1,\rho ,\omega}(l_3,\iota )
e^{-r_1|\lambda _0-\iota _0|}\vartheta (\iota _2,\lambda _3)^\rho ,
\quad
r_1>0.
$$

\par

An application of the $\ell ^{p_3/\rho}$ norm with respect to $l_3$ variable,
and using Minkowski's, Young's and H{\"o}lder's inequalities now give
\begin{multline*}
\nm {F _{0,\omega _0}(\cdo ,\lambda _0)}{\ell ^{p_0}}^\rho
\lesssim
\NM
{\sum _{\iota \in \ep \zz d}
G_{p_1,p_2,r_1,\rho ,\omega}(\cdo ,\iota )
e^{-r_1|\lambda _0-\iota _0|}\vartheta (\iota _2,\lambda _3)^\rho}{\ell ^{p_3/\rho}}
\\[1ex]
\lesssim
\sum _{\iota \in \ep \zz d}
\left (
\nm {F_{p_1,p_2,\omega} (\cdo ,\iota )^\rho * e^{-r_1|\cdo |}}{\ell ^{p_3/\rho}}
\right )
e^{-r_1|\lambda _0-\iota _0|}\vartheta (\iota _2,\lambda _3)^\rho
\\[1ex]
\lesssim
\sum _{\iota \in \ep \zz d}
\nm {F_{p_1,p_2,\omega} (\cdo ,\iota )^\rho}{\ell ^{p_3/\rho}}
e^{-r_1|\lambda _0-\iota _0|}\vartheta (\iota _2,\lambda _3)^\rho
\\[1ex]
=
\sum _{\iota \in \ep \zz d}
F_{p,\omega} (\iota )^\rho
e^{-r_1|\lambda _0-\iota _0|}\vartheta (\iota _2,\lambda _3)^\rho
\end{multline*}
for every $r_1>0$. That is
$$
\nm {F _{0,\omega _0}(\cdo ,\lambda _0)}{\ell ^{p_0}}^\rho
\lesssim
\sum _{(\iota _2,\iota _3)\in \Lambda _3}
G_{p,r_1,\rho ,\omega}(\lambda _1,\iota _2,\iota _3)
e^{-r_1|\lambda _3-\iota _3|}\vartheta (\iota _2,\lambda _3)^\rho ,
\quad
r_1>0,
$$
where
$$
G_{p,r_1,\rho ,\omega}(\iota )
=
(F_{p,\omega} (\cdo ,\iota _2,\iota _3)^\rho * e^{-r_1|\cdo |})(\iota _1)
$$
and the convolution is the discrete convolution with respect to $\ep \zz {d_1}$.
If we apply the $\ell ^{q_1/\rho}$ norm with respect
to $\lambda _1$ variable, and use Minkowski's, Young's and H{\"o}lder's
inequalities, we obtain
\begin{multline*}
\nm {F _{0,\omega _0}(\cdo ,\lambda _3)}{\ell ^{p_0,q_1}}^\rho
\lesssim
\NM {
\sum _{(\iota _2,\iota _3)\in \Lambda _3}
G_{p,r_1,\rho ,\omega}(\lambda _1,\iota _2,\iota _3)
e^{-r_1|\lambda _3-\iota _3|}\vartheta (\iota _2,\lambda _3)^\rho
}{\ell ^{q_1/\rho}}
\\[1ex]
\le
\sum _{(\iota _2,\iota _3)\in \Lambda _3}
\nm {F_{p,\omega} (\cdo ,\iota _2,\iota _3)^\rho * e^{-r_1|\cdo |}}{\ell ^{q_1/\rho}}
e^{-r_1|\lambda _3-\iota _3|}\vartheta (\iota _2,\lambda _3)^\rho
\\[1ex]
\lesssim
\sum _{(\iota _2,\iota _3)\in \Lambda _3}
\nm {F_{p,\omega} (\cdo ,\iota _2,\iota _3)^\rho}{\ell ^{q_1/\rho}}
e^{-r_1|\lambda _3-\iota _3|}\vartheta (\iota _2,\lambda _3)^\rho
\\[1ex]
=
\sum _{(\iota _2,\iota _3)\in \Lambda _3}
F_{p,q_1,\omega}(\iota _2,\iota _3)^\rho e^{-r_1|\lambda _3-\iota _3|}
\vartheta (\iota _2,\lambda _3)^\rho
\\[1ex]
\le
\sum _{\iota _3\in \ep \zz {d_3}}
\nm {F_{p,q_1,\omega}(\cdo ,\iota _3)^\rho}{\ell ^{q_2/\rho}}
\nm {\vartheta (\cdo ,\lambda _3)^\rho}{\ell ^{(q_2/\rho)'}}
e^{-r_1|\lambda _3-\iota _3|}
\\[1ex]
=
\sum _{\iota _3\in \ep \zz {d_3}}
\left (F_{p,q_1,q_2,\omega}(\iota _3)
\nm {\vartheta (\cdo ,\lambda _3)}{\ell ^{r}}\right )^\rho
e^{-r_1|\lambda _3-\iota _3|}
\\[1ex]
\le
C_\vartheta \sum _{\iota _3\in \ep \zz {d_3}}
F_{p,q_1,q_2,\omega}(\iota _3)^\rho
e^{-r_1|\lambda _3-\iota _3|}
\asymp
(F_{p,q_1,q_2,\omega}^\rho * e^{-r_1|\cdo |})(\lambda _3)
\end{multline*}
for every $r_1>0$. Here $C_\vartheta$ is given by
\eqref{Eq:CondTraceWeightModif}.

\par

By applying the $\ell ^{q_3/\rho}$ quasi-norm on
the latter inequality we obtain
\begin{multline*}
\nm {F _{0,\omega _0}}{\ell ^{p_0,q_0}(\ep \zz {2d_0})}^\rho
\lesssim
\NM {
F_{p,q_1,q_2,\omega}^\rho * e^{-r_1|\cdo |}
}{\ell ^{q_3/\rho}(\ep \zz {d_3})}
\\[1ex]
\le
\nm {F_{p,q_1,q_2,\omega}^\rho}{\ell ^{q_3/\rho}(\ep \zz {d_3})}
\nm {e^{-r_1|\cdo |}}{\ell ^{\min (1,q_3/\rho )}(\ep \zz {d_3})}
\end{multline*}
for every $r_1>0$, which gives
$$
\nm {F _{0,\omega _0}}{\ell ^{p_0,q_0}(\ep \zz {2d_0})}
\lesssim
\nm {F_{p,q_1,q_2,\omega}}{\ell ^{q_3}(\ep \zz {d_3})}
=
\nm {F_\omega}{\ell ^{p,q}(\ep \zz {2d})}.
$$
This gives \eqref{Eq:TraceModEst} for $f\in \Sigma (\rr {d_1+d_2+d_3})$,
i.{\,}e.
\begin{equation}\tag*{(\ref{Eq:TraceModEst})$'$}
\nm {\Trm _{z}f}{M^{p_0,q_0}_{(\omega _0)}}
\lesssim
\nm f{M^{p,q}_{(\omega )}},
\qquad
f \in \Sigma _1(\rr {d_1+d_2+d_3})
\end{equation}
The assertion now follows in the case $\max (p,q)<\infty$
from \eqref{Eq:TraceModEst}$'$ and the fact that
$\Sigma _1(\rr {d_1+d_2+d_3})$
is dense in $M^{p,q}_{(\omega )}(\rr {d_1+d_2+d_3})$
in view of (1) in Proposition \ref{Prop:ModSpacesBasicProp}.

\par

For the case $\max (p,q)=\infty$, we first assume that $\min (p,q)\ge 1$.
Then all involved modulation spaces are Banach spaces, and it follows
by \eqref{Eq:TraceModEst}$'$ and Hahn-Banach's theorem that $\Trm _{z}$
extends to a continuous map from
$M^{p,q}_{(\omega )}(\rr {d_1+d_2+d_3})$ to
$M^{p_0,q_0}_{(\omega _0)}(\rr {d_1+d_3})$.

\par

In order to show that this extension is unique, let
\begin{align*}
\omega _{0,1}(x_0,\xi _0)
&=
\omega _0(x_0,\xi _0)e^{-r_0(|x_0|+|\xi _0|)}
\intertext{and}
\omega _1(x,\xi )
&=
\omega (x,\xi )e^{-r_0(|x|+|\xi _0|)}
\vartheta (\xi _2,\xi _3).
\end{align*}
Then
\begin{equation}\label{Eq:ModEmbFeichAlg}
M^{p,q}_{(\omega )}(\rr d) \hookrightarrow M^{1,1}_{(\omega _1)}(\rr d),
\qquad M^{p_0,q_0}_{(\omega _0)}(\rr {d_0})
\hookrightarrow M^{1,1}_{(\omega _{0,1})}(\rr {d_0}),
\end{equation}
and \eqref{Eq:CondTraceWeight} implies
$$
\sup _{\xi _3}
\left (
\NM {\sup _{x,\xi _1} 
\left (
\frac {\omega _{0,1}(x_0,\xi _0)e^{-r_0|x_2|}}
{\omega (x,\xi _1,\cdo ,\xi _3)}
\right )
}{L^{\infty}(\rr {d_2})}
\right )
< \infty
$$
(see also \eqref{Eq:commdiagram}).
Since the assertion holds in the case $\max (p,q)<\infty$, it now follows
that $\Trm _{z}$ is uniquely defined and continuous
from $M^{1,1}_{(\omega _1)}(\rr d)$ to $M^{1,1}_{(\omega _{0,1})}(\rr {d_0})$.
In particular, if $f\in M^{p,q}_{(\omega )}(\rr d)$, then $\Trm _{z}f$ is uniquely
defined as an element in $M^{1,1}_{(\omega _{0,1})}(\rr {d_0})$ due to the
inclusions above. The asserted uniqueness now follows from the fact that
if $g_1,g_2\in M^{p_0,q_0}_{(\omega _0)}(\rr {d_0})$ and
$g_1=g_2$ as elements in $M^{1,1}_{(\omega _{0,1})}(\rr {d_0})$,
then $g_1=g_2$ as elements in $M^{p_0,q_0}_{(\omega _0)}(\rr {d_0})$.

\par

Finally suppose that $p$ and $q$ are general, and let
$\omega _1$ and
$\omega _{0,1}$ be as above. Since
\eqref{Eq:ModEmbFeichAlg} holds
when $\min (p,q)\ge 1$ and that $M^{p,q}_{(\omega)}(\rr d)$
increases with $p$ and $q$, it follows that \eqref{Eq:ModEmbFeichAlg}
still holds without the restriction $\min (p,q)\ge 1$. Hence,
if $f\in M^{p,q}_{(\omega)}(\rr d)$, then $\Trm _{z}f$ is uniquely defined
as an element in $M^{1,1}_{(\omega _{0,1})}(\rr {d_0})$. By the
computations which lead to \eqref{Eq:TraceModEst}$'$, it follows that
$\Trm _{z}f\in M^{p_0,q_0}_{(\omega _0)}(\rr {d_0})$, and that
\eqref{Eq:TraceModEst} holds. This gives (1).

\par

It remains to prove (2). Let $f_0\in M^{p,q}_{(\omega _0)}(\rr {d_0})$
be fixed, $\phi _j$ and $\phi$ be as above,
$\fy \in \Sigma _1(\rr {d_2})$ be such that $\fy (0)=1$,
and let
$$
f(x_1,x_2,x_3 ) = f_0(x_1,x_3)\fy (x_2-z).
$$
We have
$$
V_{\phi _2}\fy (x_2,\xi _2 )\lesssim e^{-2r_1(|x_2|+|\xi _2 |)}
$$
for every $r_1>r_0$, in view of Lemma \ref{Lemma:GSFourierest}.
Hence, by letting
$$
F = |V_\phi f |\cdot \omega ,
\qquad
F_0 = |V_{\phi _0} f_0 |\cdot \omega _0
$$
and using \eqref{Eq:CondTraceWeight1A}, we obtain
\begin{multline*}
F(x_1,x_2,x_3,\xi _1,\xi _2,\xi _3)
=
|V_{\phi _0}f_0(x_1,x_3,\xi _1,\xi _3) V_{\phi _2}
\fy (x_2-z,\xi _2 )\omega (x_1,x_2,x_3,\xi _1,\xi _2,\xi _3)|
\\[1ex]
\lesssim
F_0(x_1,x_3,\xi _1,\xi _3) e^{r_0|x_2|}e^{-2r(|x_2-z| +|\xi _2 |)}
\lesssim
F_0(x_1,x_3,\xi _1,\xi _3)e^{-r_1(|x_2| +|\xi _2|)}.
\end{multline*}

\par

An application of the $L^{p,q}$ norm on the latter inequality
now gives 
$$
\nm f{M^{p,q}_{(\omega )}}
\lesssim
\nm g{L^{p_2,q_2}}\nm {f_0}{M^{p_0,q_0}_{(\omega _0)}}
\asymp
\nm {f_0}{M^{p_0,q_0}_{(\omega _0)}},
\quad g(x_2,\xi _2) = e^{-r_1(|x_2| +|\xi _2|)},
$$
which in turn implies that $f\in M^{p,q}_{(\omega )}(\rr {3d})$,
and the surjectivity follows.
\end{proof}

\par

\begin{proof}[Proof of Theorem \ref{Thm:TraceMod2}]
The result follows by similar arguments as
in the proof of Theorem \ref{Thm:TraceMod}.
In order for clarifying some details, we
present the first part of the proof.

\par

We use the same notation as in the proof of Theorem 
\ref{Thm:TraceMod}, except that we let
$\rho =\min (q_1,q_2,1)$,
$$
\vartheta (x,\xi _2,\xi _3)
\equiv
\sup _{\xi _1\in \rr {d_1}}
\left (
\frac {\omega _0(x_0,\xi _0)e^{-r_0|x_2|}}
{\omega (x,\xi )}
\right ) \in \mascP _E(\rr {d+d_2+d_3}),
$$
\begin{gather*}
\Lambda _1 = \ep \zz {d+d_2+d_3},\quad
\Lambda _2 = \ep \zz {d+d_3}, \quad
\Lambda _3 = \ep \zz {d_2+d_3},
\\[1ex]
F_{q_1,\omega}(x,\xi _2,\xi _3)
=
\nm {F_\omega (x,\cdo ,\xi _2,\xi _3)}
{\ell ^{q_1}(\ep \zz {d_1})},
\intertext{and}
G_{r_1,\rho ,\omega}(j,\lambda _1,\iota _2,\iota _3)
=
(F_\omega (j,\cdo ,\iota _2,\iota _3)^\rho *
e^{-r_1|\cdo |})(\lambda _1),
\end{gather*}
where the convolution is the discrete convolution with
respect to $\ep \zz {d_1}$. Then
\begin{equation}\label{Eq:OtherWeightCond1}
\omega _0(x_0,\xi _0)e^{-r_0|x_2|}
\le
\vartheta (x,\xi _2,\xi _3)\omega (x,\xi ),
\end{equation}
and \eqref{Eq:CondTraceWeight}$'$ implies
\begin{equation}\tag*{(\ref{Eq:CondTraceWeight})$''$}
C_\vartheta = \sup _{x,\xi _3}
\nm {\vartheta (x,\cdo ,\xi _3)}{L^{r}}
<\infty .
\end{equation}

\par

Suppose that $f\in \Sigma _1(\rr d)$.
By \eqref{Eq:TraceMapBasicModEst}$'$
and \eqref{Eq:OtherWeightCond1} it follows that
\begin{multline*}
F _{0,\omega _0}(l_0,\lambda _0)^\rho
\\[1ex]
\lesssim
\sum _{(j,\iota _2,\iota _3)\in \Lambda _1} 
(F _\omega (j,\cdo \iota _2,\iota _3)^\rho *
e^{-r_1|\cdo |})
(\lambda _1)e^{-r_1(|l_0-j_0|+|j_2|
+|\lambda _3-\iota _3|)}
\vartheta (j,\iota _2,\lambda _3)^\rho ,
\quad r_1>0.
\end{multline*}
If we apply the $\ell ^{q_1/\rho}$ norm with respect to 
$\lambda _1$ variable, and use
Minkowski's and Young's inequalities,
we obtain
\begin{multline*}
\nm {F _{0,\omega _0}(l_0,\cdo ,\lambda _3)
^\rho}{\ell ^{q_1/\rho}}
=
\nm {F _{0,\omega _0}
(l_0,\cdo ,\lambda _3)}{\ell ^{q_1}}^\rho
\\[1ex]
\lesssim
\NM {\sum _{(j,\iota _2,\iota _3)\in \Lambda _1} 
G_{r_1,\rho ,\omega}(j,\cdo ,\iota _2,\iota _3)
e^{-r_1(|l_0-j_0|+|j_2|+|\lambda _3-\iota _3|)}
\vartheta (j,\iota _2,\lambda _3)^\rho}{\ell ^{q_1/\rho}}
\\[1ex]
\lesssim
\sum _{(j,\iota _2,\iota _3)\in \Lambda _1}
\left (
\nm {F _\omega (j,\cdo ,\iota _2,\iota _3)^\rho *
e^{-r_1|\cdo |}}{\ell ^{q_1/\rho}}
\right )
e^{-r_1(|l_0-j_0|+|j_2|+|\lambda _3-\iota _3|)}
\vartheta (j,\iota _2,\lambda _3)^\rho
\\[1ex]
\lesssim
\sum _{(j,\iota _2,\iota _3)\in \Lambda _1}
F _{q_1,\omega} (j,\iota _2,\iota _3)^\rho
e^{-r_1(|l_0-j_0|+|j_2|+|\lambda _3-\iota _3|)}
\vartheta (j,\iota _2,\lambda _3)^\rho ,
\end{multline*}
for every $r_1>0$.
By using H{\"o}lder's inequality with respect
to the $\iota _2$ variable in the sum
we obtain
\begin{multline*}
\nm {F _{0,\omega _0}(l_0,\cdo ,\lambda _3)
^\rho}{\ell ^{q_1/\rho}}
\\[1ex]
\lesssim
\sum _{(j,\iota _3)\in \Lambda _1}
\nm {F _{q_1,\omega} (j,\cdo ,\iota _3)^\rho}
{\ell ^{q_2/\rho}}
e^{-r_1(|l_0-j_0|+|j_2|+|\lambda _3-\iota _3|)}
\nm {\vartheta (j,\cdo ,\lambda _3)^\rho}{\ell ^{(q_2/\rho )'}} .
\\[1ex]
\le
C_\vartheta 
\sum _{(j,\iota _3)\in \Lambda _1}
\nm {F _{q_1,\omega} (j,\cdo ,\iota _3)}
{\ell ^{q_2}}^\rho
e^{-r_1(|l_0-j_0|+|j_2|+|\lambda _3-\iota _3|)}.
\end{multline*}
for every $r_1>0$. The remaining part of the proof
is performed in similar ways as in
the proof of Theorem \ref{Thm:TraceMod},
by applying Minkowski's and Young's inequalities
in suitable ways.
The details are left for the reader.
\end{proof}

\par

By letting $\omega$ and $\omega _0$ be related as
\begin{equation}\label{Eq:CondTraceWeight1B}
\omega _0(x_1,x_3,\xi _1,\xi _3)
\lesssim
\omega (x_1,x_2,x_3,\xi _1,\xi _2,\xi _3 )\vartheta (\xi _2 ,\xi _3),
\end{equation}
we get the following special case of Theorem \ref{Thm:TraceMod}.
The details are left for the reader.

\par

\begin{prop}\label{Prop:TraceMod}
Let $d_j$ be integers, $z\in \rr {d_2}$ be fixed,
$p$, $p_0$, $q$, $q_0$ and $r$
be the same as in Theorem
\ref{Thm:TraceMod}, and suppose that
$\omega \in \mascP _E(\rr {2(d_1+d_2+d_3)})$,
$\omega _0 \in \mascP _E(\rr {2(d_1+d_3)})$ and
$\vartheta \in \mascP _E(\rr {d_2+d_3})$
satisfy \eqref{Eq:CondTraceWeightModif} and
\eqref{Eq:CondTraceWeight1B}. 
Then $\Trm _{z}$ is continuous
from $M^{p,q}_{(\omega )}(\rr {d_1+d_2+d_3})$ to
$M^{p_0,q_0}_{(\omega _0)}(\rr {d_1+d_3})$, and
from $W^{p,q}_{(\omega )}(\rr {d_1+d_2+d_3})$ to
$W^{p_0,q_0}_{(\omega _0)}(\rr {d_1+d_3})$.
\end{prop}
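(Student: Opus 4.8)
The plan is to obtain this proposition as a direct corollary of Theorems \ref{Thm:TraceMod} and \ref{Thm:TraceMod2}, by checking that the hypotheses imposed here are strong enough to feed into both of them. For the modulation-space assertion there is essentially nothing to do: the weight hypothesis \eqref{Eq:CondTraceWeight} of Theorem \ref{Thm:TraceMod} is literally \eqref{Eq:CondTraceWeightModif}, and the Lebesgue data $p,p_0,q,q_0,r$ are assumed to be exactly those of that theorem, so \eqref{Eq:CondTraceLeb} and \eqref{Eq:CondTraceLebr1} are in force. Hence part (1) of Theorem \ref{Thm:TraceMod} immediately yields the continuity of $\Trm _z$ from $M^{p,q}_{(\omega )}(\rr {d_1+d_2+d_3})$ to $M^{p_0,q_0}_{(\omega _0)}(\rr {d_1+d_3})$.

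The only real work is to reduce the Wiener-amalgam assertion to Theorem \ref{Thm:TraceMod2}, which demands \eqref{Eq:CondTraceLebr2} and \eqref{Eq:CondTraceWeight}$'$ in place of \eqref{Eq:CondTraceLebr1} and \eqref{Eq:CondTraceWeightModif}. First I would note that the Lebesgue condition is automatic: the maximum in \eqref{Eq:CondTraceLebr1} carries the extra term $1/p_0$, so deleting it can only decrease the left-hand side, whence \eqref{Eq:CondTraceLebr1} implies \eqref{Eq:CondTraceLebr2} with no further assumption.

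The key step is to verify the weight condition \eqref{Eq:CondTraceWeight}$'$. The subtlety is that the majorant entering Theorem \ref{Thm:TraceMod2} depends on the full variable $x$, namely $\widetilde \vartheta (x,\xi _2,\xi _3)=\sup _{\xi _1}\bigl (\omega _0(x_0,\xi _0)e^{-r_0|x_2|}/\omega (x,\xi )\bigr )$, whereas the $\vartheta$ supplied by the proposition depends only on $(\xi _2,\xi _3)$. I would resolve this by reading the inequality in \eqref{Eq:CondTraceWeightModif} as $\omega _0(x_0,\xi _0)e^{-r_0|x_2|}\lesssim \omega (x,\xi )\vartheta (\xi _2,\xi _3)$; dividing by $\omega (x,\xi )$ and taking the supremum over $\xi _1$ gives $\widetilde \vartheta (x,\xi _2,\xi _3)\lesssim \vartheta (\xi _2,\xi _3)$ uniformly in $x$. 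Applying the $L^{r}(\rr {d_2})$-quasinorm in $\xi _2$ and then the supremum over $(x,\xi _3)$ yields $\sup _{x,\xi _3}\nm {\widetilde \vartheta (x,\cdo ,\xi _3)}{L^{r}}\lesssim \sup _{\xi _3}\nm {\vartheta (\cdo ,\xi _3)}{L^{r}}=C_\vartheta <\infty$, which is exactly \eqref{Eq:CondTraceWeight}$'$. With both hypotheses confirmed, Theorem \ref{Thm:TraceMod2}(1) delivers the continuity of $\Trm _z$ on the Wiener-amalgam spaces.

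I expect the main (and essentially only) obstacle to be bookkeeping: correctly matching the two differently shaped weight conditions and confirming that the $x$-independent majorant controls the $x$-dependent one so that the finiteness of $C_\vartheta$ transfers. The role of \eqref{Eq:CondTraceWeight1B} is to provide the exponential-free (i.e.\ $r_0=0$) bound $\omega _0\lesssim \omega \vartheta$; since $e^{-r_0|x_2|}\le 1$, this already implies the inequality part of \eqref{Eq:CondTraceWeightModif}, so no separate estimate is needed there. No density or Hahn--Banach step is required, since those have already been carried out inside Theorems \ref{Thm:TraceMod} and \ref{Thm:TraceMod2}.
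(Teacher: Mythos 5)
Your proposal is correct and follows the same route as the paper, which simply presents the proposition as a special case of Theorems \ref{Thm:TraceMod} and \ref{Thm:TraceMod2} with the details left to the reader. Your verification that \eqref{Eq:CondTraceLebr1} implies \eqref{Eq:CondTraceLebr2} and that the $x$-independent majorant $\vartheta (\xi _2,\xi _3)$ dominates the $x$-dependent one appearing in \eqref{Eq:CondTraceWeight}$'$ is exactly the bookkeeping the paper omits.
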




\par

\begin{example}
An important case of Proposition \ref{Prop:TraceMod}
appears by letting $\vartheta$ in \eqref{Eq:CondTraceWeight1B}
be given by
$$
\vartheta (\xi _2,\xi _3) = \eabs {(\xi _2,\xi _3)}^{-s}\eabs {\xi _3}^{s_0}.
$$
Here
$$
s\ge \frac {d_2}{r},\quad s_0\le s-\frac {d_2}{r}.
$$
with the first inequality strict when $r<\infty$. Note 
that \eqref{Eq:CondTraceWeightModif} is fulfilled
for such $\vartheta$. In fact, for $r<\infty$ we have
\begin{align*}
\nm {\vartheta (\cdo ,\xi _3)}{L^{r}}^{r}
&=
\int _{\rr {d_2}}\eabs {(\xi _2,\xi _3)}^{-sr}
\eabs {\xi _3}^{s_0r}
\, d\xi _2
\\[1ex]
&=
\eabs {\xi _3}^{(s_0-s)r}
\int _{\rr {d_2}}\eabs {\xi _2/\eabs {\xi _3}}^{-sr}
\, d\xi _2
\\[1ex]
&=
\eabs {\xi _3}^{(s_0-s)r+d_2}
\int _{\rr {d_2}}\eabs {\xi _2}^{-sr}
\, d\xi _2 \lesssim 1,
\end{align*}
where the inequality follows from the assumptions on $s$ and $s_0$. By
similar arguments one gets
\eqref{Eq:CondTraceWeightModif} in the case when $r=\infty$.
The details are left for the reader.

\par

By choosing $\omega _0(x_0,\xi _0)= \eabs {\xi _3}^{s_0}$,
$\omega (x,\xi )= \eabs {(\xi _2,\xi _3)}^{s}$ and
$p=q=2$, we regain Sobolev's embedding theorem (see \eqref{Eq:TraceSobolev}
in the introduction).
\end{example}

\par

By letting
$$
d_j=d,\quad p_{j,k}=p_j \in (0,\infty ]
\quad \text{and}\quad
q_{j,k}=q_j \in (0,\infty ],\quad j\in \{ 1,2,3\} ,\ k\in \{1,\dots ,d \} ,
$$
and replacing the assumption \eqref{Eq:CondTraceWeight1B}
by
\begin{equation}\label{Eq:varthetaSimple}
\omega _0(x_1,x_3,\xi _1,\xi _3)
\asymp
\omega (x_1,x_2,x_3,\xi _1,\xi _2,\xi _3 )\eabs {\xi _2}^{-\theta},
\end{equation}
Theorems \ref{Thm:TraceMod} and \ref{Thm:TraceMod2}
give the following.

\par

\begin{prop}\label{Prop:TraceMod2}
Let $z\in \rr d$ be fixed, $p=(p_1,p_2,p_3)\in
(0,\infty ]^3$, $q=(q_1,q_2,q_3)\in (0,\infty ]^3$
and $r\in (0,\infty ]$ be such that
\eqref{Eq:CondTraceLebr1} holds,
and suppose that $\omega \in \mascP _E(\rr {6d})$ and
$\omega _0 \in \mascP _E(\rr {4d})$
satisfy \eqref{Eq:varthetaSimple},
for some
\begin{equation}\label{Eq:PropTraceMod2}
\theta \ge d
\left (
\max 
\left (
\frac 1{p_1},\frac 1{p_3},\frac 1{q_1},\frac 1{q_2},
\frac 1{q_3}
\right ) -\frac 1{q_2}
\right )
\end{equation}
with strict inequality when
$$
\min (p_1,p_3,q_1,q_3,1)<q_2.
$$
Then $\Trm _{z}$ is continuous and surjective
$M^{p,q}_{(\omega )}(\rr {3d})$ to
$M^{p_0,q_0}_{(\omega _0)}(\rr {2d})$, and
from $W^{p,q}_{(\omega )}(\rr {3d})$ to
$W^{p_0,q_0}_{(\omega _0)}(\rr {2d})$.
\end{prop}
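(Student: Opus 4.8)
The plan is to realize both assertions as instances of Theorems~\ref{Thm:TraceMod} and~\ref{Thm:TraceMod2} (equivalently, of Proposition~\ref{Prop:TraceMod}), the only real work being to produce an auxiliary weight $\vartheta$ and a parameter $r$ for which the hypotheses of those theorems are met. The natural choice is
$$
\vartheta (\xi _2,\xi _3) = \eabs {\xi _2}^{-\theta },
$$
which depends on $\xi _2$ alone. With this $\vartheta$, the hypothesis \eqref{Eq:varthetaSimple} is nothing but \eqref{Eq:CondTraceWeight1B}, so \eqref{Eq:CondTraceWeightModif} will follow once the norm bound on $\vartheta$ is checked, while \eqref{Eq:CondTraceWeight1A} holds automatically for a suitable $r_0\ge 0$ because $\eabs {\xi _2}^{\theta }\lesssim e^{r_0|\xi _2|}$. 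Consequently, once continuity has been secured, surjectivity comes for free from part~(2) of Theorems~\ref{Thm:TraceMod} and~\ref{Thm:TraceMod2}, the preimage of $f_0$ being $a(x_1,x_2,x_3)=f_0(x_1,x_3)\fy (x_2-z)$ with $\fy (0)=1$, exactly as in the surjectivity arguments there.

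The first concrete step is to turn the weight condition into an integrability statement. Since $\vartheta$ is independent of $\xi _3$, the constant in \eqref{Eq:CondTraceWeightModif} reduces to
$$
C_\vartheta = \nm {\eabs {\cdo }^{-\theta }}{L^{r}(\rr d)},
$$
which is finite precisely when $\theta r>d$ for $r<\infty$, and when $\theta \ge 0$ for $r=\infty$. Thus I must choose $r\in (0,\infty ]$ so that simultaneously \eqref{Eq:CondTraceLebr1} (in the $M$-case), respectively \eqref{Eq:CondTraceLebr2} (in the $W$-case), holds and $\theta r>d$. Taking $r$ as large as \eqref{Eq:CondTraceLebr1} allows and comparing the resulting requirement $\theta >d/r$ against the admissible range of $1/r$ is what converts the existence of such an $r$ into an explicit lower bound on $\theta$; the borderline regime, where $r=\infty$ is permitted, is exactly the one in which equality is allowed in \eqref{Eq:PropTraceMod2}, and otherwise the inequality must be strict. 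The condition $\min (p_1,p_3,q_1,q_3,1)<q_2$ marks the transition between these two regimes.

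The step I expect to be the main obstacle is matching the exponent in \eqref{Eq:PropTraceMod2} on the nose, in particular the appearance of $1/q_3$ rather than the coarser $1$ that a black-box reading of \eqref{Eq:CondTraceLebr1} would produce. Exploiting that $\vartheta$ involves only $\xi _2$, I would reorganize the final H\"older and Young steps in the proof of Theorem~\ref{Thm:TraceMod}, pairing the $\xi _2$-integration that carries $\eabs {\xi _2}^{-\theta }$ against the joint $(q_2,q_3)$-block instead of against $q_2$ alone; the admissible order of the Minkowski and Young interchanges then feeds $q_3$ into the effective exponent, and tracking when these interchanges are lossless versus lossy yields precisely the strict/non-strict dichotomy. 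An alternative route to the same end is the monotonicity of $M^{p,q}_{(\omega )}$ in the third Lebesgue index together with the embedding technique behind \eqref{Eq:commdiagram}. Once the correct $r$ is pinned down in either way, Theorems~\ref{Thm:TraceMod} and~\ref{Thm:TraceMod2} deliver continuity, and the $W$-space statement together with surjectivity follow verbatim as described above.
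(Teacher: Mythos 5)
Your reduction is exactly the one the paper intends: take $\vartheta (\xi _2,\xi _3)=\eabs {\xi _2}^{-\theta}$, observe that \eqref{Eq:varthetaSimple} then yields both \eqref{Eq:CondTraceWeight1B} and (via $\eabs {\xi _2}^{\theta}\lesssim e^{r_0|\xi _2|}$) the surjectivity hypothesis \eqref{Eq:CondTraceWeight1A}, reduce \eqref{Eq:CondTraceWeightModif} to $\nm {\eabs \cdo ^{-\theta}}{L^{r}(\rr d)}<\infty$, i.{\,}e. $\theta r>d$ for $r<\infty$ and $\theta \ge 0$ for $r=\infty$, and then invoke Theorems \ref{Thm:TraceMod} and \ref{Thm:TraceMod2}. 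Up to the optimization over $r$, this is the paper's (unwritten) argument, and your treatment of surjectivity is fine.

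The problem lies in the step you yourself flag as the main obstacle, and it cannot be closed. Optimizing $r$ in \eqref{Eq:CondTraceLebr1} gives
$$
\theta \ \ge \ d\left (\max \left (\tfrac 1{p_1},\tfrac 1{p_3},\tfrac 1{q_1},\tfrac 1{q_2},1\right )-\tfrac 1{q_2}\right ),
$$
with strict inequality when $q_2>\min (p_1,p_3,q_1,1)$ --- that is, the constant $1$ appears, not $1/q_3$. Your proposed repair, re-pairing the $\xi _2$-H{\"o}lder step against the joint $(q_2,q_3)$-block so that $q_3$ replaces the $1$, cannot succeed, because \eqref{Eq:PropTraceMod2} as literally stated is false whenever $1/q_3$ genuinely undercuts the $1$. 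Take $p_1=p_2=p_3=q_1=q_2=q_3=\infty$, $\omega _0=1$ and $\omega =\eabs {\xi _2}^{\theta}$: \eqref{Eq:PropTraceMod2} then asks only $\theta >0$, but with $h_N(x_2)=\psi (x_2)\sum _{|n|\le N}e^{i\scal {x_2}{n}}$, $\psi \in \Sigma _1$, $\psi (0)=1$, one has $\nm {g\otimes h_N\otimes k}{M^{\infty}_{(\omega )}}\lesssim N^{\theta}$ while $\Trm _0(g\otimes h_N\otimes k)=h_N(0)\, g\otimes k$ with $|h_N(0)|\asymp N^{d}$, forcing $\theta \ge d$. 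The same endpoint is recorded by the paper's own conditions \eqref{Eq:TraceWeightCondIntro}$'$ and \eqref{Eq:MainThmAddLebCond}, both of which keep the $1$ in the max. So the $1/q_3$ in \eqref{Eq:PropTraceMod2} (and the $q_3$ in the accompanying strictness condition, and likewise the missing $1$ in \eqref{Eq:CorTraceMod}) is an error in the statement rather than something reachable by a sharper argument; your ``black-box'' first reading already proves the corrected proposition, and the extra reorganization you sketch should be abandoned, not carried out.
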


\par

By choosing $p_1=p_2=p_3$ and $q_1=q_2=q_3$ in the previous
proposition, we get the following.

\par

\begin{cor}\label{Cor:TraceMod}
Let $z\in \rr d$ be fixed, $p,q,r\in  (0,\infty ]$
be such that \eqref{Eq:CondTraceLebr1} holds,
and suppose that $\omega \in \mascP _E(\rr {6d})$ and
$\omega _0 \in \mascP _E(\rr {4d})$
satisfy \eqref{Eq:varthetaSimple},
for some
\begin{equation}\label{Eq:CorTraceMod}
\theta \ge d
\left (
\max 
\left (
\frac 1p,\frac 1q
\right ) -\frac 1q
\right )
\end{equation}
with strict inequality when $\min (p,1)<q$.
%
%
Then $\Trm _{z}$ is continuous and surjective
$M^{p,q}_{(\omega )}(\rr {3d})$ to
$M^{p,q}_{(\omega _0)}(\rr {2d})$, and
from $W^{p,q}_{(\omega )}(\rr {3d})$ to
$W^{p,q}_{(\omega _0)}(\rr {2d})$.
\end{cor}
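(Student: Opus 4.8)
The plan is to obtain Corollary \ref{Cor:TraceMod} as the diagonal specialization of Proposition \ref{Prop:TraceMod2}, choosing all components of the vector-valued Lebesgue exponents to be equal. Concretely, I would apply Proposition \ref{Prop:TraceMod2} with $p_1=p_2=p_3=p$ and $q_1=q_2=q_3=q$. With this choice, the convention recorded after Proposition \ref{Prop:ModSpacesBasicProp} identifies the vector-exponent spaces attached to $p=(p,p,p)$ and $q=(q,q,q)$ with the scalar-exponent modulation space $M^{p,q}_{(\omega )}(\rr {3d})$; on the target side $p_0=(p,p)$ and $q_0=(q,q)$, so that $M^{p_0,q_0}_{(\omega _0)}(\rr {2d})=M^{p,q}_{(\omega _0)}(\rr {2d})$, and similarly for the $W$-spaces. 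Hence both the source and target spaces in Proposition \ref{Prop:TraceMod2} collapse exactly onto those appearing in the corollary.

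It then remains to check that the hypotheses of Proposition \ref{Prop:TraceMod2} reduce to those stated here. The weight relation \eqref{Eq:varthetaSimple} is left unchanged, and the exponent condition \eqref{Eq:CondTraceLebr1} is assumed directly. For the threshold on $\theta$, I would simply compute that with $p_1=p_3=p$ and $q_1=q_2=q_3=q$ the maximum in \eqref{Eq:PropTraceMod2} collapses,
$$
\max \left ( \frac 1{p_1},\frac 1{p_3},\frac 1{q_1},\frac 1{q_2},\frac 1{q_3}\right )
=\max \left ( \frac 1p,\frac 1q\right ),
$$
while $1/q_2=1/q$, so that \eqref{Eq:PropTraceMod2} becomes precisely \eqref{Eq:CorTraceMod}.

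Finally, for the strict-inequality clause I would verify that the two conditions coincide. The clause in Proposition \ref{Prop:TraceMod2} demands strict inequality when $\min (p_1,p_3,q_1,q_3,1)<q_2$, which under our choice reads $\min (p,q,1)<q$. Writing $\min (p,q,1)=\min (\min (p,1),q)$, one sees this holds exactly when $\min (p,1)<q$, which is the strict-inequality condition stated in the corollary. With all hypotheses matched, the continuity and surjectivity assertions follow verbatim from Proposition \ref{Prop:TraceMod2}. There is thus no genuine obstacle in this argument: the only point requiring more than a direct substitution is this elementary equivalence of the two strict-inequality thresholds.
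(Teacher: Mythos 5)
Your proposal is correct and is exactly the paper's route: the corollary is stated there as the specialization of Proposition \ref{Prop:TraceMod2} to $p_1=p_2=p_3=p$, $q_1=q_2=q_3=q$, and your verification that \eqref{Eq:PropTraceMod2} collapses to \eqref{Eq:CorTraceMod} and that $\min (p,q,1)<q$ is equivalent to $\min (p,1)<q$ supplies the details the paper leaves to the reader.
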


\par

\begin{rem} \label{Rem:TraceReaction}
Let $p,q,r\in (0,\infty )$ be such that $0<p<q$, $r=\min (1,p,q)$,
\begin{alignat*}{2}
p_1 &= (p,\dots ,p)\in (0,\infty ]^d, &
\quad
p_2 &=(p,\dots ,p)\in (0,\infty ]^{d-1},
\\[1ex]
q_1&=(q,\dots ,q,r)\in (0,\infty ]^d &
\quad \text{and}\quad
q_2 &= (q,\dots ,q)\in (0,\infty ]^{d-1}.
\end{alignat*}
Also let $s\in \mathbf R$,
$$
\omega _1(x_1,\xi _1) = \eabs {\xi _1}^s
\quad \text{and}\quad
\omega _2(x_2,\xi _2) = \eabs {\xi _2}^s,
\quad
x_1,\xi _1 \in \rr d,\ 
x_2,\xi _2 \in \rr {d-1}.
$$
Then it is proved in \cite[Theorem 3.1]{FeHuWa} that the trace map which takes
$$
(x_1,\dots ,x_d)\mapsto f(x_1,\dots ,x_d)
\quad \text{into}\quad
(x_1,\dots ,x_{d-1})\mapsto f(x_1,\dots ,x_{d-1},0)
$$
is continuous from $M^{p_1,q_1}_{(\omega _1)}(\rr d)$ to
$M^{p_2,q_2}_{(\omega _2)}(\rr {d-1})$.

\par

See also \cite[Theorem 2]{CunTsu} for similar results of trace 
operators on
Wiener amalgam type spaces $W^{p,q}_{(\omega )}(\rr d)$.

\par

We note that Theorem \ref{Thm:TraceMod} is more general compared
to the above result in the sense of relaxing the conditions of
the involved
weight functions, as well as including more exponents in the 
definition of
modulation spaces. For example, we notice that the trace result
above only deals with polynomial type weights of special kinds, and
the range of the Lebesgue exponents $p$ and $q$ is $(0,\infty )$
instead of the full interval $(0,\infty ]$, which includes
$\infty$.  

\par

On the other hand, \cite[Theorem 3.1]{FeHuWa} is a special case
of \cite[Theorem 3.3]{FeHuWa}, which concern trace results for
broader classes of $\alpha$-modulation spaces. Evidently,
Theorem \ref{Thm:TraceMod}, Propositions \ref{Prop:TraceMod},
\ref{Prop:TraceMod2} and Corollary \ref{Cor:TraceMod} do not show
any properties for $\alpha$-modulation spaces which are not 
modulation
spaces. We note however that some arguments in \cite{Schn}
or in the proof of
Theorem \ref{Thm:TraceMod}, might be suitable when extending
Theorems 3.1 and 3.3 in \cite{FeHuWa} to allow the Lebesgue exponents
to belong to the full interval $(0,\infty ]$.
(Cf. \cite[Remark 3.1]{Schn} and
\eqref{Eq:commdiagram}.)
\end{rem}

\par

\begin{rem}\label{Rem:TraceMod}
Let $p$, $q$, $\theta$, $\omega _0$, $\omega$,
$z$ be the same as in Proposition \ref{Prop:TraceMod2}.
It is clear that Proposition \ref{Prop:TraceMod2} is more general
compared to Corollary \ref{Cor:TraceMod}. On the other hand,
the involved
modulation space $M^{p,\infty ,p,q,q,q}_{(\omega )}(\rr {3d})$
in the domain
of $\Trm _z$ in Proposition \ref{Prop:TraceMod2} is in some sense
more complicated compared to the corresponding space
$M^{p,q}_{(\omega )}(\rr {3d})$ in Corollary
\ref{Cor:TraceMod}. An interesting 
question concerns whether Proposition \ref{Prop:TraceMod2}
involve important situations which are not included in
Corollary \ref{Cor:TraceMod}. 

\par

In this respect, on one hand we have
\begin{equation}\label{Eq:ModExtOp}
\begin{aligned}
f\in M^{p,\infty ,p,q,q,q}_{(\omega )}(\rr {3d})
\setminus
M^{p,q}_{(\omega )}(\rr {3d})
\quad \text{when}\quad
f(x,y,\zeta ) &= g(x,\zeta ), \quad x,y,\zeta \in \rr d,
\\[1ex]
g &\in M^{p,q}_{(\omega _0)}(\rr {2d})\setminus 0,
\end{aligned}
\end{equation}
if in addition $p<\infty$. Here $\omega$ and $\omega _0$
should satisfy \eqref{Eq:varthetaSimple} and
\eqref{Eq:CorTraceMod}. Hence we may identify
$M^{p,q}_{(\omega _0)}(\rr {2d})$ with a subset of
$M^{p,\infty ,p,q,q,q}_{(\omega )}(\rr {3d})$, but
not as a subset of $M^{p,q}_{(\omega )}(\rr {3d})$.
By straight-forward
computations it also follows that $f$ and $g$ in \eqref{Eq:ModExtOp}
fulfill $\Trm _zf = g$ for every $z\in \rr d$.

\par

On the other hand, if $p,p_0\in (0,\infty ]$, then Proposition
\ref{Prop:TraceMod2} shows that
\begin{alignat}{2}
\Trm _z \, &: \, & M^{p,p_0,p,q,q,q}_{(\omega )}(\rr {3d})
&\to M^{p,q}_{(\omega _0)}(\rr {2d})
\label{Eq:TraceMapProp}
\end{alignat}
is surjective.
\end{rem}

\par

\section{Continuity for pseudo-differential operators of
amplitude type on modulation spaces}\label{sec3}

\par

In this section we shall apply Proposition \ref{Prop:TraceMod}
to deduce identification properties between pseudo-differential operators
of amplitude types and standard types, when the symbol and
amplitude classes are modulation spaces. Then we combine
this with suitable continuity and compactness results in \cite{Toft19,Toft20}
to deduce continuity, Schatten-von Neumann and nuclearity properties
for pseudo-differential operators of amplitude types.

\par

\subsection{Pseudo-differential operators with amplitudes in modulation
spaces}

\par

It is suitable to consider a modified version of the modulation space
$M^{p,q}_{(\omega )}(\rr d)$ in order to formulate the main result. Let
$$
p_j,q_j\in (0,\infty ]^d,\ j\in \{ 1,2,3\} ,
\quad
p=(p_1,p_2,p_3)\in (0,\infty ]^{3d},
\quad
q=(q_1,q_2,q_3)\in (0,\infty ]^{3d},
$$
$r=\min (1,p,q)$, $\omega \in \mascP _E(\rr {6d})$ and
$\phi \in \Sigma _1(\rr {3d})$. Then let
\begin{alignat*}{2}
\nm F{\maclL ^{p,q}(\rr {6d})} &\equiv \nm {G_1}{L^{p,q}(\rr {6d})},
& \quad
G_1(x,y,\zeta ,\xi ,\eta ,z) &= F(x,x+y,\zeta ,\xi ,\eta ,z),
\intertext{and}
\nm F{\maclL ^{p,q}_*(\rr {6d})} &\equiv \nm {G_2}{L^{q,p}(\rr {6d})},
& \quad
G_2(\xi ,\eta ,z,x,y,\zeta ) &= F(x,x+y,\zeta ,\xi ,\eta ,z),
\end{alignat*}
when $F\in L^r_{loc}(\rr {6d})$. The modulation spaces
$$
\maclM ^{p,q}_{(\omega )}(\rr {3d})
=
\maclM ^{p_1,p_2,p_3,q_1,q_2,q_3}_{(\omega )}(\rr {3d})
\quad \text{and}\quad
\maclW ^{p,q}_{(\omega )}(\rr {3d})
=
\maclW ^{p_1,p_2,p_3,q_1,q_2,q_3}_{(\omega )}(\rr {3d})
$$
are the sets of all $a\in \Sigma _1'(\rr {3d})$ such that
$$
\nm a{\maclM ^{p,q}_{(\omega )}}
=
\nm a{\maclM ^{p_1,p_2,p_3,q_1,q_2,q_3}_{(\omega )}}
\equiv
\nm {V_\phi a\cdot \omega}{\maclL ^{p,q}}
$$
respectively
$$
\nm a{\maclW ^{p,q}_{(\omega )}}
=
\nm a{\maclW ^{p_1,p_2,p_3,q_1,q_2,q_3}_{(\omega )}}
\equiv
\nm {V_\phi a\cdot \omega}{\maclL ^{p,q}_*}
$$
is finite.
The spaces 
$\maclM ^{p,q}_{(\omega )}(\rr {3d})$ and 
$\maclM ^{p,q}_{(\omega )}(\rr {3d})$ are equipped 
with the topologies supplied by the quasi-norms
$\nm \cdo{\maclM ^{p,q}_{(\omega )}}$
and
$\nm \cdo{\maclW ^{p,q}_{(\omega )}}$, respectively.

\par

As for the classical modulation spaces $M ^{p,q}_{(\omega )}(\rr {3d})$,
we put
\begin{align*}
\maclM ^{p_{0,1},p_{0,2},p_{0,3},q_{0,1},q_{0,2},q_{0,3}}_{(\omega )}
&=
\maclM ^{p_1,p_2,p_3,q_1,q_2,q_3}_{(\omega )}
\intertext{and}
\maclW ^{p_{0,1},p_{0,2},p_{0,3},q_{0,1},q_{0,2},q_{0,3}}_{(\omega )}
&=
\maclW ^{p_1,p_2,p_3,q_1,q_2,q_3}_{(\omega )}
\end{align*}
when $p_{0,j},q_{0,j}\in (0,\infty ]$,
$$
p_j=(p_{0,j},\dots ,p_{0,j})\in (0,\infty ]^d
\quad \text{and}\quad
q_j=(p_{0,j},\dots ,q_{0,j})\in (0,\infty ]^d,
\quad
j=1,2,3.
$$

\par

The following lemma justifies the introduction of
$\maclM ^{p,q}_{(\omega )}(\rr {3d})$.

\par

\begin{lemma}\label{Lemma:IdentTranslMod}
Let $p_j,q_j\in (0,\infty ]^d$, $j=1,2,3$, $p=(p_1,p_2,p_3)$,
$q=(q_1,q_2,q_3)$,
$\omega \in \mascP _E(\rr {6d})$,
$$
\omega _0(x,y,\zeta ,\xi ,\eta ,z)
=
\omega (x,x+y,\zeta ,\xi -\eta ,\eta ,z)
$$
and let $T_1$ be the map on $\Sigma _1'(\rr {3d})$, given by
$$
(T_1a)(x,y,\zeta ) = a(x,x+y,\zeta ),\qquad a\in \Sigma _1'(\rr {3d}).
$$
Then $T_1$ restricts to a homeomorphism from
$\maclM ^{p,q}_{(\omega )}(\rr {3d})$
to
$M^{p,q}_{(\omega _0)}(\rr {3d})$
\end{lemma}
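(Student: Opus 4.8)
The plan is to read off the effect of $T_1$ on the short-time Fourier transform and then to absorb the residual frequency shear by a measure-preserving substitution inside the mixed quasi-norm. Write $T_1a=a\circ L$ with the linear map $L\in\GL(3d,\mathbf R)$ given by $L(x,y,\zeta)=(x,x+y,\zeta)$; it is invertible with $\det L=1$, $L^{-1}(x,y,\zeta)=(x,y-x,\zeta)$ and $(L^*)^{-1}(\xi ,\eta ,z)=(\xi-\eta,\eta,z)$. As in \eqref{homeoF2tmap}, such a linear substitution is a homeomorphism on $\Sigma_1'(\rr{3d})$ (and on $\Sigma_1(\rr{3d})$), and since $\mascP_E$ is invariant under invertible linear changes of variables, the weight $\omega_0$, which by definition satisfies $\omega_0(X,\Xi)=\omega(LX,(L^*)^{-1}\Xi)$ with $X=(x,y,\zeta)$ and $\Xi=(\xi,\eta,z)$, indeed lies in $\mascP_E(\rr{6d})$.

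First I would fix the window $\phi\in\Sigma_1(\rr{3d})\setminus 0$ entering the definition of $\maclM^{p,q}_{(\omega)}$ and set $\tilde\phi=\phi\circ L\in\Sigma_1(\rr{3d})\setminus 0$ (Gelfand--Shilov spaces are stable under $L$). Substituting $W=LY$ in the defining integral for $V_{\tilde\phi}(T_1a)$ and using $\det L=1$ together with $\scal{L^{-1}W}{\Xi}=\scal{W}{(L^*)^{-1}\Xi}$ yields
\[
V_{\tilde\phi}(T_1a)(X,\Xi)=V_\phi a(LX,(L^*)^{-1}\Xi),
\]
first for $a\in\Sigma_1(\rr{3d})$ and then for all $a\in\Sigma_1'(\rr{3d})$ by the standard extension of the STFT to Gelfand--Shilov distributions. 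Multiplying by $\omega_0$ and invoking $\omega_0(X,\Xi)=\omega(LX,(L^*)^{-1}\Xi)$ gives
\[
(V_{\tilde\phi}(T_1a)\cdot\omega_0)(x,y,\zeta,\xi,\eta,z)=(V_\phi a\cdot\omega)(x,x+y,\zeta,\xi-\eta,\eta,z).
\]

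By window independence of the standard modulation quasi-norm (Proposition \ref{Prop:ModSpacesBasicProp}\,(2)) I may use $\tilde\phi$ to compute $\nm{T_1a}{M^{p,q}_{(\omega_0)}}$, so it is comparable to the $L^{p,q}$-quasi-norm of the last right-hand side. Comparing with $\nm a{\maclM^{p,q}_{(\omega)}}=\nm{(V_\phi a\cdot\omega)(x,x+y,\zeta,\xi,\eta,z)}{L^{p,q}}$, the two integrands differ only through the shear $\xi\mapsto\xi-\eta$ in the first frequency block. In the nested definition of $L^{p,q}$ the spatial integrations are carried out first, and the $\xi$-integration (exponent $q_1$) precedes the $\eta$-integration with $\eta$ held fixed; hence for each fixed $\eta$ the map $\xi\mapsto\xi-\eta$ is a measure-preserving bijection of $\rr d$ and leaves every surrounding integration untouched. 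Therefore the two quasi-norms coincide and
\[
\nm{T_1a}{M^{p,q}_{(\omega_0)}}\asymp\nm a{\maclM^{p,q}_{(\omega)}},\qquad a\in\Sigma_1'(\rr{3d}).
\]
Since $T_1$ is a bijection on $\Sigma_1'(\rr{3d})$ with inverse $b\mapsto b\circ L^{-1}$, this two-sided bound shows that $T_1$ restricts to a continuous bijection from $\maclM^{p,q}_{(\omega)}(\rr{3d})$ onto $M^{p,q}_{(\omega_0)}(\rr{3d})$ with continuous inverse, which is the asserted homeomorphism.

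The delicate points I expect are essentially bookkeeping: choosing $\tilde\phi=\phi\circ L$ so that the $a$-side window returns to $\phi$, so that no window change is needed on the $\maclM$ side and only window independence of the standard space is invoked; and verifying that the leftover shear $\xi\mapsto\xi-\eta$ is harmless precisely because of the order of integration in the mixed norm. The latter is where the exact placement of the $-\eta$ shift in the definition of $\omega_0$, relative to the ordering of $(p,q)$, is genuinely used; everything else is routine substitution.
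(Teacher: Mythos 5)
Your proposal is correct and follows essentially the same route as the paper's proof: both rest on the identity $V_{\phi\circ L}(a\circ L)(X,\Xi)=V_\phi a(LX,(L^*)^{-1}\Xi)$ for the shear $L(x,y,\zeta)=(x,x+y,\zeta)$, followed by multiplication with $\omega _0$ and an application of the $L^{p,q}$ quasi-norm. The only difference is that you spell out the two points the paper leaves implicit, namely the window independence of the quasi-norm and the fact that the residual translation $\xi \mapsto \xi -\eta$ is harmless because the $\xi$-integration is performed with $\eta$ fixed; both observations are accurate.
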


\par

\begin{proof}
By straight-forward computations we get
$$
|(V_{T\phi }(Ta))(x,y,\zeta ,\xi ,\eta ,z)|
=
|V_{\phi}a(x,x+y,\zeta ,\xi -\eta ,\eta ,z)|
$$
when $a\in \Sigma _1'(\rr {3d})$ and
$\phi \in \Sigma _1(\rr {3d})$.
The result now follows by multiplying the equality with
$\omega _0$ and then apply the $L^{p,q}$ quasi-norm.
\end{proof}

\par

We also need the following lemma. We omit the proof since the
result follows from \cite[Proposition 2.8]{Toft15} and its proof.

\par

\begin{lemma}\label{Lemma:ExpOpModAmpl}
Let $p$, $q$, $\omega$ be the same as in Lemma
\ref{Lemma:IdentTranslMod}, let
$$
\omega _0(x,y,\zeta ,\xi ,\eta ,z)
=
\omega (x,y+z,\eta +\zeta ,\xi ,\eta ,z),
$$
and let $T_2$ be the map
$$
(T_2a)(x,y,\zeta ,\xi ,\eta ,z)
=
(e^{i\scal {D_\xi}{D_y}}a)(x,y,\zeta ,\xi ,\eta ,z),
\qquad a\in \Sigma _1'(\rr {3d}).
$$
Then $T_2$ from $\Sigma _1'(\rr {3d})$ to $\Sigma _1'(\rr {3d})$
restricts to a homeomorphism from
$M^{p,q}_{(\omega )}(\rr {3d})$ to
$M^{p,q}_{(\omega _0)}(\rr {3d})$.
\end{lemma}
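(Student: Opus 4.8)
The plan is to treat $T_2=e^{i\scal {D_\xi}{D_y}}$ exactly as the three-variable analogue of the twisted operators $e^{i\scal {AD_\xi}{D_x}}$ handled in Proposition \ref{Prop:CalculiTransfer} (the source \cite[Proposition 2.8]{Toft15}), now acting on the amplitude $a\in \Sigma _1'(\rr {3d})$ through the pair of its three $d$-dimensional variables on which the exponential differentiates, while leaving the first variable $x$ inert. First I would record that $T_2$ is well defined and homeomorphic on $\Sigma _1'(\rr {3d})$, restricting to a homeomorphism of $\Sigma _1(\rr {3d})$, with inverse $e^{-i\scal {D_\xi}{D_y}}$. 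This is the analogue of Proposition \ref{Prop:CalculiTransfer}{\rm{(1)}}: conjugating by the (partial) Fourier transform turns $T_2$ into multiplication by a chirp of the form $e^{i\scal {\cdo}{\cdo}}$, which by the remark following Proposition \ref{Prop:CalculiTransfer} is continuous on $\Sigma _1(\rr {3d})$ and on $\Sigma _1'(\rr {3d})$.

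The core of the argument is the short-time Fourier transform intertwining identity. Fixing a window $\phi \in \Sigma _1(\rr {3d})$ and arguing as in the proof of Lemma \ref{Lemma:IdentTranslMod} and of Proposition \ref{Prop:CalculiTransfer}{\rm{(3)}}, I expect to obtain
$$
|(V_{T_2\phi}(T_2a))(x,y,\zeta ,\xi ,\eta ,z)| = |(V_\phi a)(x,y+z,\zeta +\eta ,\xi ,\eta ,z)| ,
$$
that is, $T_2$ acts on the phase-space side by the shear $(y,\zeta )\mapsto (y+z,\zeta +\eta )$ of the position variables, with the frequency variables $(\xi ,\eta ,z)$ held fixed. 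Here the shift of $y$ is governed by $z$ and the shift of $\zeta$ by $\eta$, which is precisely the cross structure encoded in the definition $\omega _0(x,y,\zeta ,\xi ,\eta ,z)=\omega (x,y+z,\eta +\zeta ,\xi ,\eta ,z)$. Since both $a$ and the window are transported by the same operator, Proposition \ref{Prop:ModSpacesBasicProp}{\rm{(2)}} guarantees that the choice of $\phi$ is immaterial up to equivalent quasi-norms.

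Multiplying this identity by $\omega _0$ gives
$$
\omega _0(x,y,\zeta ,\xi ,\eta ,z)|(V_{T_2\phi}(T_2a))(x,y,\zeta ,\xi ,\eta ,z)| = (\omega \cdot |V_\phi a|)(x,y+z,\zeta +\eta ,\xi ,\eta ,z) .
$$
I would then apply the mixed quasi-norm $\nm \cdo {L^{p,q}}$ and change variables $y\mapsto y+z$, $\zeta \mapsto \zeta +\eta$. The point making this licit is that in the $M^{p,q}$ mixed-norm ordering the position variables $y,\zeta$ are integrated before the frequency variables $\eta ,z$ on which the two shifts depend; hence, at fixed $(\eta ,z)$, the shear is a measure-preserving translation leaving the inner $L^{p_2}$- and $L^{p_3}$-norms unchanged. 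This yields $\nm {T_2a}{M^{p,q}_{(\omega _0)}}\asymp \nm a{M^{p,q}_{(\omega )}}$, and together with the invertibility of $T_2$ from the first step the homeomorphism follows.

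The step I expect to be the main obstacle is the intertwining identity itself: unlike the linear pullback $T_1$ of Lemma \ref{Lemma:IdentTranslMod}, the operator $T_2$ is a genuine metaplectic/chirp operator, so determining its exact action on $V_\phi a$ requires the Gaussian-window computation underlying \cite[Proposition 2.8]{Toft15} rather than a mere substitution of arguments. A secondary point requiring care is verifying that the change of variables respects the order of integration in the mixed quasi-norm; this holds precisely because each sheared position variable is paired, in the shift, with a frequency variable integrated strictly later, so the two translations can be carried out independently inside the $L^{p_2}$- and $L^{p_3}$-integrals.
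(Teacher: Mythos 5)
Your proposal is correct and follows essentially the same route as the paper, which omits the proof and defers to \cite[Proposition 2.8]{Toft15}: the metaplectic/chirp covariance identity $|V_{T_2\phi}(T_2a)|=|V_\phi a|\circ(\text{shear})$ together with the observation that the sheared position variables are paired with frequency variables integrated later in the mixed quasi-norm is exactly the argument underlying that cited proposition. Your identification of the shear $(y,\zeta)\mapsto (y+z,\zeta+\eta)$ matches the weight transformation $\omega_0(x,y,\zeta,\xi,\eta,z)=\omega(x,y+z,\eta+\zeta,\xi,\eta,z)$, so nothing is missing.
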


\par

Let $a\in \Sigma _1(\rr {3d})$, $T_1$ and $T_2$ be as in Lemmas
\ref{Lemma:IdentTranslMod} and \ref{Lemma:ExpOpModAmpl}.
Then \eqref{Eq:aToa0B} shows that
\begin{equation}\tag*{(\ref{Eq:aToa0B})$'$}
a_0 = (\Trm _0 \circ T_2 \circ T_1)a.
\end{equation}
For any $a\in \Sigma _1'(\rr {3d})$ such that the right-hand side of
\eqref{Eq:aToa0B}$'$ makes sense as an element
in $\Sigma _1'(\rr {2d})$, we define $\op (a)$ as the operator $\op _0(a)$.
By previous investigations  we get the following.
Here the involved weights should satisfy conditions of the form
\begin{equation}\label{Eq:CondTraceWeightOp}
\omega _0(x,\zeta ,\xi ,z)
\lesssim
\omega (x,x+z,\zeta +\eta ,\xi -\eta ,\eta ,z)\vartheta (\eta ,z)
\end{equation}
and
\begin{equation}\label{Eq:CondTraceWeight1AOp}
\omega (x,y,\zeta ,\xi ,\eta ,z)
\lesssim
\omega _0(x,\zeta ,\xi ,z)e^{r_0(|y|+|\eta |)}.
\end{equation}

\par

\begin{thm}\label{Thm:AmpOpPsOpMod}
Let $p_j,q_j,r\in (0,\infty ]^{d}$, $j=1,2,3$,
$\omega \in \mascP _E(\rr {6d})$,
$\omega _0 \in \mascP _E(\rr {4d})$, $p$, $p_0$, $q$ and $q_0$
be such that \eqref{Eq:CondTraceLeb},
\eqref{Eq:CondTraceLebr1}
and \eqref{Eq:CondTraceWeightOp} hold true for some $r_0\ge 0$ and
$\vartheta \in \mascP _E (\rr {2d})$ such that
\eqref{Eq:CondTraceWeightModif} holds.
Then the following is true:
\begin{enumerate}
\item if $a\in \maclM ^{p,q}_{(\omega )}(\rr {3d})$, then
there is a unique $a_0\in M^{p_0,q_0}_{(\omega _0)}(\rr {2d})$
such that $\op (a) = \op _0(a_0)$;

\vrum

\item if in addition \eqref{Eq:CondTraceWeight1AOp} holds true,
and $a_0\in M^{p_0,q_0}_{(\omega _0)}(\rr {2d})$,
then there is an $a\in \maclM ^{p,q}_{(\omega )}(\rr {3d})$
such that $\op (a) = \op _0(a_0)$.
\end{enumerate}
\end{thm}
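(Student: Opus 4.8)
The key is the factorisation \eqref{Eq:aToa0B}$'$, which expresses the passage from an amplitude $a$ to its Kohn--Nirenberg symbol $a_0$ as the composition $a_0=(\Trm_0\circ T_2\circ T_1)a$. The whole statement therefore reduces to understanding this composite as a map between the relevant modulation spaces, and the plan is to treat its three factors in turn. By Lemma \ref{Lemma:IdentTranslMod}, $T_1$ is a homeomorphism from $\maclM^{p,q}_{(\omega)}(\rr{3d})$ onto $M^{p,q}_{(\omega_1)}(\rr{3d})$ with $\omega_1(x,y,\zeta,\xi,\eta,z)=\omega(x,x+y,\zeta,\xi-\eta,\eta,z)$, and by Lemma \ref{Lemma:ExpOpModAmpl}, $T_2$ is a homeomorphism from $M^{p,q}_{(\omega_1)}(\rr{3d})$ onto $M^{p,q}_{(\omega_2)}(\rr{3d})$, where a direct substitution gives $\omega_2(x,y,\zeta,\xi,\eta,z)=\omega(x,x+y+z,\eta+\zeta,\xi-\eta,\eta,z)$. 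Thus $T_2\circ T_1$ is a homeomorphism $\maclM^{p,q}_{(\omega)}(\rr{3d})\to M^{p,q}_{(\omega_2)}(\rr{3d})$, and everything comes down to the trace map $\Trm_0$ (the case $z=0$, $d_1=d_2=d_3=d$ of Section \ref{sec2}) acting from $M^{p,q}_{(\omega_2)}$ into $M^{p_0,q_0}_{(\omega_0)}$.

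For part (1) I would invoke Theorem \ref{Thm:TraceMod}(1) for the pair $(\omega_2,\omega_0)$. After setting the traced variable $y=0$ one has $\omega_2(x,0,\zeta,\xi,\eta,z)=\omega(x,x+z,\eta+\zeta,\xi-\eta,\eta,z)$, so the hypothesis \eqref{Eq:CondTraceWeightOp} is exactly the trace condition \eqref{Eq:CondTraceWeightModif} (equivalently \eqref{Eq:CondTraceWeight}) for $(\omega_2,\omega_0)$ evaluated at $y=0$; the general-$y$ form, carrying the admissible factor $e^{-r_0|y|}$, is recovered from this by the moderateness of $\omega$ in its second slot, while the $L^r$-integrability of $\vartheta$ and the Lebesgue restriction are the assumptions \eqref{Eq:CondTraceWeightModif} and \eqref{Eq:CondTraceLebr1}. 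Hence $a_0=\Trm_0T_2T_1a$ is a well-defined element of $M^{p_0,q_0}_{(\omega_0)}(\rr{2d})$ depending continuously on $a$, and by the very definition of $\op(a)$ one gets $\op(a)=\op_0(a_0)$. Uniqueness of $a_0$ is immediate: by the kernel theorem the assignment $a_0\mapsto K_{a_0,0}$ is a homeomorphism on $\Sigma_1'(\rr{2d})$, so $\op_0$ is injective on $\Sigma_1'(\rr{2d})$, and two symbols in $M^{p_0,q_0}_{(\omega_0)}\subseteq\Sigma_1'$ giving the same operator must coincide.

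For part (2) I would run the chain backwards. Given $a_0\in M^{p_0,q_0}_{(\omega_0)}(\rr{2d})$, fix $\fy\in\Sigma_1(\rr d)$ with $\fy(0)=1$ and set $b(x,y,\zeta)=a_0(x,\zeta)\fy(y)$, the explicit section of $\Trm_0$ used in the proof of Theorem \ref{Thm:TraceMod}(2). Since $V_\phi b$ factors as $V_{\phi_0}a_0\cdot V_{\phi_2}\fy$ with $\phi=\phi_0\otimes\phi_2$, and $V_{\phi_2}\fy$ decays faster than any exponential in $(y,\eta)$, the extra hypothesis \eqref{Eq:CondTraceWeight1AOp} — which, transported through $T_1,T_2$ and again using moderateness in the second slot, amounts to $\omega_2\lesssim\omega_0\,e^{r_0(|y|+|\eta|)}$, i.e. condition \eqref{Eq:CondTraceWeight1A} for $(\omega_2,\omega_0)$ — guarantees $b\in M^{p,q}_{(\omega_2)}(\rr{3d})$ with $\Trm_0b=a_0$. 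Setting $a=(T_2\circ T_1)^{-1}b$ then yields $a\in\maclM^{p,q}_{(\omega)}(\rr{3d})$; since $e^{-i\scal{D_\zeta}{D_y}}$ commutes with the translation implementing $T_1^{-1}$, this $a$ is exactly the candidate $e^{-i\scal{D_\zeta}{D_y}}(a_0(x,\zeta)\fy(y-x))$ of Remark \ref{Rem:TraceMod}. Finally $\Trm_0T_2T_1a=\Trm_0b=a_0$, whence $\op(a)=\op_0(a_0)$.

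The main obstacle is not the structural argument — which is just ``homeomorphism, homeomorphism, trace'' — but the \emph{weight bookkeeping}: one must verify that the affine substitutions defining $\omega_1$ and $\omega_2$ carry the two-sided hypotheses \eqref{Eq:CondTraceWeightOp} and \eqref{Eq:CondTraceWeight1AOp} precisely onto \eqref{Eq:CondTraceWeight} and \eqref{Eq:CondTraceWeight1A} for the transformed pair $(\omega_2,\omega_0)$. The delicate point is that the substitution in the second slot introduces a shift by $y$ and the frequency substitutions shift by $\eta$, so the two conditions become equivalent to the trace-theorem conditions only after these shifts are absorbed into the admissible exponential slack $e^{\pm r_0(|y|+|\eta|)}$ via the moderateness \eqref{Eq:ModWeightProp} of $\omega$ and $\omega_0$; this is exactly the step that forces the general moderate-weight framework of Section \ref{sec2} rather than the narrower polynomial-weight setting.
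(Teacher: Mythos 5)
Your proof follows the paper's argument essentially verbatim: the paper likewise proves (1) by combining Theorem \ref{Thm:TraceMod}\,(1) with Lemmas \ref{Lemma:IdentTranslMod} and \ref{Lemma:ExpOpModAmpl}, together with the observation that \eqref{Eq:CondTraceWeightOp} is equivalent, by moderateness of $\omega$, to the $y$-dependent form \eqref{Eq:CondTraceWeightOp}$'$ required by the trace theorem, and it proves (2) with exactly your candidate $a=e^{-i\scal {D_\zeta}{D_y}}(a_0(x,\zeta )\fy (y-x))$. The only difference is that you spell out the weight bookkeeping, the commutation of $T_1$ and $T_2$, and the uniqueness of $a_0$ via injectivity of $\op _0$ on $\Sigma _1'(\rr {2d})$, details which the paper explicitly leaves to the reader.
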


\par

\begin{proof}
The assertion (1) follows by combining Theorem
\ref{Thm:TraceMod} (1), Lemma \ref{Lemma:IdentTranslMod}
and Lemma \ref{Lemma:ExpOpModAmpl}, and the observation
that \eqref{Eq:CondTraceWeightOp} is the same as that
\begin{equation}\tag*{(\ref{Eq:CondTraceWeightOp})$'$}
\omega _0(x,\zeta ,\xi ,z)
\lesssim
\omega (x,x+y+z,\zeta +\eta ,\xi -\eta ,\eta ,z)e^{r_0|y|}
\vartheta (\eta ,z)
\end{equation}
should hold for some $r_0>0$, due to the moderateness of $\omega$.
The details are left for the reader.

\par

The assertion (2) follows by letting
$$
a(x,y,\zeta )= e^{-i\scal {D_\xi}{D_y}}(a_0(x,\zeta )\fy (y-x)),
$$
where $\fy \in \Sigma _1(\rr d)$ fulfills $\fy (0)=1$, and
applying Theorem \ref{Thm:TraceMod} (2), Lemma
\ref{Lemma:IdentTranslMod} and Lemma
\ref{Lemma:ExpOpModAmpl}. The details are left for the reader.
\end{proof}

\par

In a similar way as when passing from
Theorem \ref{Thm:TraceMod} into Proposition \ref{Prop:TraceMod},
it follows that the following result is a special cases of
Theorem \ref{Thm:AmpOpPsOpMod}.
The details are left for the reader. Here the involved weights
are related as
\begin{equation}\label{Eq:CondTraceWeight1BOp}
\omega _0(x,\zeta ,\xi ,z)
\asymp
\omega (x,x+z,\zeta +\eta ,\xi -\eta ,\eta ,z)\vartheta (\eta ,z)
\end{equation}

\par

\begin{prop}\label{Prop:AmpOpPsOpMod}
Let $p$, $p_0$, $q$, $q_0$, $r$
and $\vartheta$ be the same as in Theorem
\ref{Thm:AmpOpPsOpMod}, and suppose that
$\omega \in \mascP _E(\rr {6d})$ and
$\omega _0 \in \mascP _E(\rr {4d})$
satisfy \eqref{Eq:CondTraceWeight1BOp}. Then
$$
\sets {\op (a)}{a\in \maclM ^{p,q}_{(\omega )}(\rr {3d})}
=
\sets {\op _0(a_0)}{a_0\in M ^{p,q}_{(\omega )}(\rr {2d})}.
$$
\end{prop}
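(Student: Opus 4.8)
The plan is to read the asserted identity as two inclusions and to obtain each of them from one half of Theorem~\ref{Thm:AmpOpPsOpMod}, the point being that the symmetric relation \eqref{Eq:CondTraceWeight1BOp} simultaneously encodes the upper and the lower weight bounds that the two parts of that theorem require. Since $p$, $p_0$, $q$, $q_0$, $r$ and $\vartheta$ are taken verbatim from Theorem~\ref{Thm:AmpOpPsOpMod}, the conditions \eqref{Eq:CondTraceLeb}, \eqref{Eq:CondTraceLebr1} and \eqref{Eq:CondTraceWeightModif} are already in force, so the only thing to do is to extract the two weight estimates \eqref{Eq:CondTraceWeightOp} and \eqref{Eq:CondTraceWeight1AOp} from the single two-sided hypothesis \eqref{Eq:CondTraceWeight1BOp}.

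For the inclusion $\{\op (a):a\in \maclM ^{p,q}_{(\omega )}(\rr {3d})\}\subseteq \{\op _0(a_0):a_0\in M^{p_0,q_0}_{(\omega _0)}(\rr {2d})\}$ I would simply note that the $\lesssim$-half of \eqref{Eq:CondTraceWeight1BOp} is literally \eqref{Eq:CondTraceWeightOp}. Together with the inherited integrability \eqref{Eq:CondTraceWeightModif} of $\vartheta$, this is exactly the hypothesis of Theorem~\ref{Thm:AmpOpPsOpMod}(1), which assigns to each $a\in \maclM ^{p,q}_{(\omega )}(\rr {3d})$ a (unique) $a_0\in M^{p_0,q_0}_{(\omega _0)}(\rr {2d})$ with $\op (a)=\op _0(a_0)$. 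This gives one inclusion immediately.

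For the reverse inclusion I would invoke Theorem~\ref{Thm:AmpOpPsOpMod}(2), whose only extra hypothesis is \eqref{Eq:CondTraceWeight1AOp}; so the real work is to deduce \eqref{Eq:CondTraceWeight1AOp} from the $\gtrsim$-half of \eqref{Eq:CondTraceWeight1BOp}, namely $\omega (x,x+z,\zeta +\eta ,\xi -\eta ,\eta ,z)\vartheta (\eta ,z)\lesssim \omega _0(x,\zeta ,\xi ,z)$. I would do this by matching a general point of $\rr {6d}$ to the $5d$-slice on which \eqref{Eq:CondTraceWeight1BOp} carries information: lining up the frequency blocks (which replaces $\omega _0(x,\zeta ,\xi ,z)$ by $\omega _0(x,\zeta -\eta ,\xi +\eta ,z)$, corrected by the moderateness of $\omega _0$ at the cost of a factor $e^{r|\eta |}$), then using the moderateness of $\omega$ in the second position block to pass from the slice value $x+z$ to an arbitrary second argument, and the moderateness of $\vartheta$ to bound $1/\vartheta (\eta ,z)$. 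Each such perturbation is governed by the two-sided moderate estimates \eqref{eq:2Next} and \eqref{Eq:ModWeightProp}, and the exponential slack $e^{r_0(|y|+|\eta |)}$ permitted in \eqref{Eq:CondTraceWeight1AOp} is there precisely to absorb them. With \eqref{Eq:CondTraceWeight1AOp} established, Theorem~\ref{Thm:AmpOpPsOpMod}(2) produces, for every $a_0\in M^{p_0,q_0}_{(\omega _0)}(\rr {2d})$, an amplitude $a\in \maclM ^{p,q}_{(\omega )}(\rr {3d})$ with $\op (a)=\op _0(a_0)$, which yields the second inclusion and hence equality. This whole passage parallels the way Propositions~\ref{Prop:TraceMod} and~\ref{Prop:TraceMod2} are read off from Theorem~\ref{Thm:TraceMod}.

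I expect the main obstacle to be exactly this weight bookkeeping. In the trace model \eqref{Eq:varthetaSimple} the weight $\omega _0$ is independent of the traced variable and the $\asymp$ forces $\omega$ to be essentially independent of it as well, so the lower bound is automatic; here, by contrast, \eqref{Eq:CondTraceWeight1BOp} pins down $\omega$ only along a hyperplane of $\rr {6d}$, and leaving that hyperplane inevitably costs moderate (i.e.\ exponential) factors. The delicate, non-formal step is to keep track of \emph{which} moderate direction each perturbation lies in, so that the accumulated exponentials fall only on the $y$- and $\eta$-blocks and not on $x$ or $z$; this is where the precise shape of the shift $(\zeta ,\xi )\mapsto (\zeta +\eta ,\xi -\eta )$ and the integrability \eqref{Eq:CondTraceWeightModif} of $\vartheta$ must be used together, and it is the one place where the argument is more than routine.
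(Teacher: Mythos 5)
Your overall strategy is the one the paper intends: the proposition is to be read off from Theorem \ref{Thm:AmpOpPsOpMod} by feeding the two halves of \eqref{Eq:CondTraceWeight1BOp} into parts (1) and (2), and your treatment of the inclusion ``$\subseteq$'' is correct and complete --- the $\lesssim$-half of \eqref{Eq:CondTraceWeight1BOp} is indeed literally \eqref{Eq:CondTraceWeightOp}, and the remaining hypotheses are inherited verbatim.

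The gap is in the step you yourself flag as delicate, and it is not merely bookkeeping: the implication \eqref{Eq:CondTraceWeight1BOp} $\Rightarrow$ \eqref{Eq:CondTraceWeight1AOp} is false in general. Unwinding the $\gtrsim$-half as you describe gives $\omega (x,y,\zeta ,\xi ,\eta ,z)\lesssim \omega _0(x,\zeta ,\xi ,z)\,e^{c|\eta |}\,\vartheta (\eta ,z)^{-1}e^{c|y-x-z|}$, and the two offending factors cannot be confined to the $y$- and $\eta$-blocks: $e^{c|y-x-z|}\le e^{c(|y|+|x|+|z|)}$ deposits $e^{c|x|}e^{c|z|}$, and the only general lower bound $\vartheta (\eta ,z)\gtrsim e^{-c(|\eta |+|z|)}$ deposits another $e^{c|z|}$, while \eqref{Eq:CondTraceWeight1AOp} forbids both $|x|$ and $|z|$ in the exponent. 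Concretely, with $\omega (x,y,\zeta ,\xi ,\eta ,z)=\eabs \eta ^{2}\eabs z$, $\omega _0=1$ and $\vartheta (\eta ,z)=\eabs \eta ^{-2}\eabs z^{-1}$, the relation \eqref{Eq:CondTraceWeight1BOp} holds with equality and \eqref{Eq:CondTraceWeightModif} holds for suitable $r$, yet \eqref{Eq:CondTraceWeight1AOp} fails for every $r_0$; worse, applying Theorem \ref{Thm:AmpOpPsOpMod}(1) with the improved weight $\omega _0\eabs z$ and $\vartheta '=\eabs \eta ^{-2}$ shows that every $\op (a)$ with $a\in \maclM ^{p,q}_{(\omega )}(\rr {3d})$ has its Kohn--Nirenberg symbol in $M^{p_0,q_0}_{(\eabs z)}\subsetneq M^{p_0,q_0}_{(\omega _0)}$, so the asserted set equality itself fails there and no alternative construction of $a$ can rescue the ``$\supseteq$'' inclusion. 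Your deduction does go through when $\vartheta$ depends only on $\eta$ --- which is exactly the situation of Proposition \ref{Prop:AmpOpPsOpMod2}, where $\vartheta =\eabs \eta ^{-\theta}$ and where, read in the transformed variables $y\mapsto x+y+z$, the second-slot correction costs precisely $e^{c|y|}$ --- or under an added hypothesis such as $\vartheta (\eta ,z)\gtrsim e^{-c|\eta |}$ uniformly in $z$. To be fair, the paper's own ``proof'' is the sentence that the details are left to the reader, and the same obstruction affects its claim; but your text asserts that the accumulated exponentials can be made to fall only on the $y$- and $\eta$-blocks, and that assertion is exactly what breaks down.
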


\par

In the same way it follows that the next result is a special
case of the previous one. The details are left for the reader.
Here the relationship between
the weights are more specified into
\begin{equation}\tag*{(\ref{Eq:CondTraceWeight1BOp})$'$}
\omega _0(x,\zeta ,\xi ,z)
\asymp
\omega (x,x+z,\zeta +\eta ,\xi -\eta ,\eta ,z)\eabs \eta ^{-\theta}
\end{equation}

\par

\begin{prop}\label{Prop:AmpOpPsOpMod2}
Let $p,q\in (0,\infty ]$, and suppose that
$\omega \in \mascP _E(\rr {6d})$ and
$\omega _0 \in \mascP _E(\rr {4d})$
satisfy \eqref{Eq:CondTraceWeight1BOp}$'$,
for some $\theta$ such that \eqref{Eq:CorTraceMod}
holds with strict inequality when $q> \min (p,1)$.
Then
$$
\sets {\op (a)}{a\in \maclM ^{p,q}_{(\omega )}(\rr {3d})}
=
\sets {\op _0(a_0)}{a_0\in M ^{p,q}_{(\omega _0)}(\rr {2d})}.
$$
\end{prop}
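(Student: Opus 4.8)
The plan is to read Proposition \ref{Prop:AmpOpPsOpMod2} as the scalar specialization of Proposition \ref{Prop:AmpOpPsOpMod}, obtained exactly as Corollary \ref{Cor:TraceMod} is extracted from Theorem \ref{Thm:TraceMod} through the Example. I would take $d_1=d_2=d_3=d$, let all the vector Lebesgue exponents be constant, $p_j=(p,\dots ,p)$ and $q_j=(q,\dots ,q)$ for $j=1,2,3$, so that the composite exponents $p_0=(p,p)$ and $q_0=(q,q)$ in \eqref{Eq:CondTraceLeb} collapse to the scalars $p$ and $q$, and choose the auxiliary weight in Proposition \ref{Prop:AmpOpPsOpMod} to be $\vartheta (\eta ,z)=\eabs \eta ^{-\theta}$. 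With this choice the hypothesis \eqref{Eq:CondTraceWeight1BOp} of Proposition \ref{Prop:AmpOpPsOpMod} is literally \eqref{Eq:CondTraceWeight1BOp}$'$, so the whole statement — including both inclusions of the asserted identity of operator classes — will follow once the exponent hypotheses of Proposition \ref{Prop:AmpOpPsOpMod} (equivalently of Theorem \ref{Thm:AmpOpPsOpMod}) are checked for this data.

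The only genuine verification is that $\vartheta (\eta ,z)=\eabs \eta ^{-\theta}$ satisfies \eqref{Eq:CondTraceWeightModif} for a suitable $r$. Since $\vartheta$ does not depend on $z$, this is the computation already performed in the Example:
\begin{equation*}
C_\vartheta =\sup _{z\in \rr d}\nm {\eabs {\cdo}^{-\theta}}{L^r(\rr d)}
=\nm {\eabs {\cdo}^{-\theta}}{L^r(\rr d)},
\end{equation*}
which is finite precisely when $\theta r>d$, that is $1/r<\theta /d$ for finite $r$, while for $r=\infty$ the supremum equals $1$ as soon as $\theta \ge 0$. I would therefore treat $r$ as an internal parameter and select it so that \eqref{Eq:CondTraceLebr1}, which in the present scalar case reads $\max (1/p,1/q,1)-1/q\le 1/r$, holds together with this integrability bound. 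An admissible $r\in (0,\infty ]$ exists exactly when $\theta $ is large enough, and the role of \eqref{Eq:CorTraceMod}, together with its strict-inequality clause for $\min (p,1)<q$, is to guarantee this solvability; the endpoint $r=\infty$ corresponds to the case $q\le \min (p,1)$, where \eqref{Eq:CorTraceMod} is non-strict and only $\theta \ge 0$ is required.

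With such an $r$ fixed, I would simply invoke Proposition \ref{Prop:AmpOpPsOpMod}, whose conclusion is the desired equality of the two classes $\{ \op (a)\}$ and $\{ \op _0(a_0)\}$. I expect the single delicate point to be the arithmetic in the middle step: matching the lower bound \eqref{Eq:CorTraceMod} on $\theta $ to the existence of an $r$ meeting both \eqref{Eq:CondTraceLebr1} and the $L^r$-integrability of $\eabs {\cdo}^{-\theta}$, and in particular getting the boundary cases right (strict versus non-strict inequality, and the passage to $r=\infty$). Everything else is bookkeeping already supplied by Proposition \ref{Prop:AmpOpPsOpMod} and the conjugation by the maps $T_1,T_2$ of Lemmas \ref{Lemma:IdentTranslMod} and \ref{Lemma:ExpOpModAmpl}.
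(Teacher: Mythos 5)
Your route coincides with the paper's: Proposition \ref{Prop:AmpOpPsOpMod2} is read off from Proposition \ref{Prop:AmpOpPsOpMod} (hence from Theorem \ref{Thm:AmpOpPsOpMod} together with Lemmas \ref{Lemma:IdentTranslMod} and \ref{Lemma:ExpOpModAmpl}) by taking scalar exponents and $\vartheta (\eta ,z)=\eabs \eta ^{-\theta}$, with \eqref{Eq:CondTraceWeightModif} checked exactly as in the Example; the paper itself only records ``special case, details left for the reader''. One concrete warning about the step you defer: with \eqref{Eq:CorTraceMod} taken literally the arithmetic does \emph{not} close. You must produce $r$ with $1/r\ge \max (1/p,1/q,1)-1/q$ (this is \eqref{Eq:CondTraceLebr1} in the scalar case) and, for finite $r$, with $\eabs \cdo ^{-\theta}\in L^r(\rr d)$, i.e. $\theta >d/r$; together these force $\theta >d\left (\max \left (1,\tfrac 1p,\tfrac 1q\right )-\tfrac 1q\right )$. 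For $p=q=2$ this means $\theta >d/2$, whereas \eqref{Eq:CorTraceMod} as printed only demands $\theta >0$. The bound consistent with \eqref{Eq:CondTraceLebr1}, with \eqref{Eq:TraceWeightCondIntro}$'$ in the introduction, and with the Sobolev special case, is $\theta \ge d\left (\max \left (1,\tfrac 1p,\tfrac 1q\right )-\tfrac 1q\right )$; i.e. the entry $1$ is missing from the max in \eqref{Eq:CorTraceMod}. With that reading, your solvability analysis (including the endpoint $r=\infty$ when $q\le \min (p,1)$, where only $\theta \ge 0$ is needed) is correct, and the rest is the bookkeeping you describe.
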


\par

We observe that
$\maclM ^{p,q}_{(\omega )}(\rr {3d})=M ^{p,q}_{(\omega )}(\rr {3d})$
in the last proposition.

\par

We may combine the previous results with other established results for
pseudo-differential operators when acting on modulation spaces. For example,
the following result is a straight-forward consequence of
Theorem \ref{Thm:AmpOpPsOpMod} or
Proposition \ref{Prop:AmpOpPsOpMod} with Theorem \ref{Thm:OpCont}.
The details are left for the reader. Here the condition
\eqref{Eq:WeightPseudoCond} from the introduction
needs to be modified into
\begin{alignat}{2}
\frac {\omega _2(x,\xi )}{\omega _1(z,\zeta )}
&\lesssim
\omega (x,z,\zeta +\eta ,\xi -\zeta -\eta ,\eta ,z-x)\vartheta (\eta ,z), &
\qquad x,z,\xi ,\eta ,\zeta  &\in \rr d.
\tag*{(\ref{Eq:WeightPseudoCond})$'$}
\end{alignat}
Recall also Subsection \ref{subsec1.4} for ordering
relations between elements in $(0,\infty ]$ and
elements in $(0,\infty ]^d$.

\par

\begin{thm}
Let $\sigma \in \operatorname{S}_{2d}$,
$p,q,r\in (0,\infty ]$ and $p_1,p_2\in (0,\infty ]^{2d}$
be such that 
$$
\frac 1{p_2} - \frac 1{p_1}
=
\frac 1p +\min \left ( 0,\frac 1q -1 \right ),
\quad q\le \min (p_2)\le \max(p_2)\le p,
$$
and
\begin{equation}\label{Eq:MainThmAddLebCond}
\max \left ( 1,\frac 1p,\frac 1q  \right ) -\frac 1q \le \frac 1r,
\end{equation}
hold with latter inequality strict when $q>\min (1,p)$. Also let
$\omega \in \mascP _E(\rr {6d})$,
$\omega _1,\omega _2\in \mascP _E(\rr {2d})$
and 
$\vartheta \in \mascP _E (\rr {2d})$ be such that
\eqref{Eq:WeightPseudoCond}$'$ and
\eqref{Eq:CondTraceWeightModif} hold.
If $a\in \maclM ^{p,\infty ,p,q,q,q}_{(\omega )}(\rr {3d})$,
then $\op (a)$ from
$\Sigma _1(\rr d)$ to $\Sigma _1'(\rr d)$ is
uniquely extendable to a continuous
map from $M^{p_1}_{\sigma ,(\omega _1)}(\rr d)$ to
$M^{p_2}_{\sigma ,(\omega _2)}(\rr d)$,
and
\begin{equation}
\begin{aligned}
\nm {\op (a)f}{M^{p_2}_{\sigma ,(\omega _2)}}
&\lesssim
\nm a{\maclM ^{p,\infty ,p,q,q,q}_{(\omega )}}
\nm f{M^{p_1}_{\sigma ,(\omega _1)}},
\\[1ex]
a &\in \maclM ^{p,\infty ,p,q,q,q}_{(\omega )}(\rr {3d}),
\ 
f\in M^{p_1}_{\sigma ,(\omega _1)}(\rr d).
\end{aligned}
\end{equation}
\end{thm}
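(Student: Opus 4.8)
The plan is to factor $\op (a)$ as a Kohn--Nirenberg operator $\op _0(a_0)$ and then quote the continuity result for standard-type operators in Theorem \ref{Thm:OpCont}. Two ingredients are combined: the amplitude-to-symbol reduction of Theorem \ref{Thm:AmpOpPsOpMod} (which itself rests on the trace estimate of Theorem \ref{Thm:TraceMod}), and Theorem \ref{Thm:OpCont} with $A=0$.

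First I would carry out the reduction. Apply Theorem \ref{Thm:AmpOpPsOpMod}(1) with the choices $p_1=p_3=p$, $p_2=\infty$ and $q_1=q_2=q_3=q$, for which the amplitude space is exactly $\maclM ^{p,\infty ,p,q,q,q}_{(\omega )}(\rr {3d})$ and the induced Kohn--Nirenberg symbol space is $M^{p,q}_{(\omega _0)}(\rr {2d})$, where $\omega _0\in \mascP _E(\rr {4d})$ is the intermediate weight furnished by the reduction relation \eqref{Eq:CondTraceWeightOp}. For these exponents the Lebesgue hypothesis \eqref{Eq:CondTraceLebr1} of that theorem reads $\max (1,1/p,1/q)-1/q\le 1/r$, that is, exactly \eqref{Eq:MainThmAddLebCond}; and the $L^r$-integrability \eqref{Eq:CondTraceWeightModif} of $\vartheta$ is assumed. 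Hence every $a\in \maclM ^{p,\infty ,p,q,q,q}_{(\omega )}(\rr {3d})$ determines a unique $a_0\in M^{p,q}_{(\omega _0)}(\rr {2d})$ with $\op (a)=\op _0(a_0)$. Since the reduction map $a\mapsto a_0$ equals $\Trm _0\circ T_2\circ T_1$ (see \eqref{Eq:aToa0B}$'$), a composition of the bounded maps of Lemmas \ref{Lemma:IdentTranslMod} and \ref{Lemma:ExpOpModAmpl} with the continuous trace map of Theorem \ref{Thm:TraceMod}(1), it is continuous, so that $\nm {a_0}{M^{p,q}_{(\omega _0)}}\lesssim \nm {a}{\maclM ^{p,\infty ,p,q,q,q}_{(\omega )}}$.

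Next I would apply Theorem \ref{Thm:OpCont} with $A=0$ to $\op _0(a_0)$. The Lebesgue conditions \eqref{Eq:pqconditions} there are identical with the assumptions imposed on $p_1,p_2,p,q$ in the statement, so only the weight condition \eqref{Eq:WeightPseudoA0Cond},
$$
\frac {\omega _2(x,\xi )}{\omega _1(y,\eta )}\lesssim \omega _0(x,\eta ,\xi -\eta ,y-x),
$$
has to be verified for the intermediate weight $\omega _0$. The natural device is to fix $\omega _0$ by the inverse affine substitution, setting $\omega _0(x,\zeta ,\xi ,z)=\omega _2(x,\zeta +\xi )/\omega _1(x+z,\zeta )$ (a moderate weight, being a ratio of moderate weights composed with affine maps), so that \eqref{Eq:WeightPseudoA0Cond} holds with equality; the reduction requirement \eqref{Eq:CondTraceWeightOp} then becomes, after this change of variables and the identification of the free variable of \eqref{Eq:CondTraceWeightOp} with the convolution variable $\eta$, the standing hypothesis \eqref{Eq:WeightPseudoCond}$'$. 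Theorem \ref{Thm:OpCont} therefore gives that $\op _0(a_0)$ extends uniquely to a continuous map $M^{p_1}_{\sigma ,(\omega _1)}(\rr d)\to M^{p_2}_{\sigma ,(\omega _2)}(\rr d)$ with $\nm {\op _0(a_0)f}{M^{p_2}_{\sigma ,(\omega _2)}}\lesssim \nm {a_0}{M^{p,q}_{(\omega _0)}}\nm {f}{M^{p_1}_{\sigma ,(\omega _1)}}$. Chaining this with the reduction estimate and $\op (a)=\op _0(a_0)$ yields the asserted bound; uniqueness of the extension of $\op (a)$ is inherited from the uniqueness in Theorem \ref{Thm:OpCont}, because $\op (a)$ and $\op _0(a_0)$ agree on $\Sigma _1(\rr d)$, so no separate density or narrow-convergence argument is needed (the case $\max (p_1)=\infty$ being already covered there).

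The main obstacle is the weight bookkeeping in the second step: one must track the affine changes of variables induced by $T_1$, $T_2$ and $\Trm _0$ through all six phase-space arguments of $\omega$ and confirm that the composition of \eqref{Eq:CondTraceWeightOp} with \eqref{Eq:WeightPseudoA0Cond} collapses to the single inequality \eqref{Eq:WeightPseudoCond}$'$, in particular matching the arguments of $\vartheta$ correctly. Everything else -- the Lebesgue exponent arithmetic and the norm chaining -- is routine once this identification is in place.
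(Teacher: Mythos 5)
Your proposal is correct and follows exactly the route the paper intends: the paper states this theorem as a ``straight-forward consequence of Theorem \ref{Thm:AmpOpPsOpMod} or Proposition \ref{Prop:AmpOpPsOpMod} with Theorem \ref{Thm:OpCont}'' and leaves the details to the reader, and your two-step reduction (amplitude to Kohn--Nirenberg symbol, then the standard-type continuity result with $A=0$) together with the choice of the intermediate weight $\omega _0$ is precisely that argument, filled in. The only caveat is the minor bookkeeping mismatch you already flag yourself, namely that the substitution $z\mapsto z-x$ in the last argument of $\vartheta$ must be absorbed using moderateness when matching \eqref{Eq:CondTraceWeightOp} against \eqref{Eq:WeightPseudoCond}$'$.
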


\par

As a special case of the previous result we have the
following extension of Theorem \ref{Thm:AmpOpModCont}
in the introduction.

\par

\renewcommand{\rubrik}{Theorem \ref{Thm:AmpOpModCont}$'$\!}

\par

\begin{tom}
Let $p,q,p_j,q_j,r\in (0,\infty ]$, $j=1,2$, be such that
$$
\frac 1{p_2} - \frac 1{p_1} = \frac 1{q_2} -\frac 1{q_1}
=
\frac 1p +\min \left ( 0,\frac 1q -1 \right ),
\quad q\le p_2,q_2\le p,
$$
and \eqref{Eq:MainThmAddLebCond} holds
with strict inequality when $q>\min (1,p)$. Also let
$\omega \in \mascP _E(\rr {6d})$,
$\omega _1,\omega _2\in \mascP _E(\rr {2d})$
and 
$\vartheta \in \mascP _E (\rr {2d})$ be such that
\eqref{Eq:WeightPseudoCond}$'$ and \eqref{Eq:CondTraceWeightModif} hold.
If $a\in \maclM ^{p,\infty ,p,q,q,q}_{(\omega )}(\rr {3d})$,
then $\op (a)$ is continuous
from $M^{p_1,q_1}_{(\omega _1)}(\rr d)$ to
$M^{p_2,q_2}_{(\omega _2)}(\rr d)$, and from
$W^{p_1,q_1}_{(\omega _1)}(\rr d)$ to $W^{p_2,q_2}_{(\omega _2)}(\rr d)$.
%
\end{tom}

\par

We observe that if $p=\infty$ and $q\in (0,1]$ in Theorem
\ref{Thm:AmpOpModCont}$'$, then $p_1=p_2\in [q,\infty ]$,
$q_1=q_2\in [q,\infty ]$ and we may choose $r=\infty$ and thereby
choose $\vartheta (\eta ,z)=1$ everywhere. It is now evident that
Theorem \ref{Thm:AmpOpModCont}$'$ takes the form of
Theorem \ref{Thm:AmpOpModCont} in the introduction for such
choices of $p$ and $q$.

\par

By combining Theorem \ref{Thm:AmpOpPsOpMod} or
Proposition \ref{Prop:AmpOpPsOpMod} with \eqref{Eq:OpContSpec1Modif}
instead of Theorem \ref{Thm:OpCont} we get the following. The details
are left for the reader.

\par

\begin{thm}\label{Thm:PseudoSymbWienerAm}
Let
$p,q,r\in [1,\infty ]$
be such that 
and
\begin{equation}\label{Eq:MainThmAddLebCondAmal}
\frac 1{q'} \le \frac 1r,
\end{equation}
hold. Also let
$\omega \in \mascP _E(\rr {6d})$,
$\omega _1,\omega _2\in \mascP _E(\rr {2d})$
and 
$\vartheta \in \mascP _E (\rr {2d})$ be such that
\eqref{Eq:WeightPseudoCond}$'$ and
\eqref{Eq:CondTraceWeightModif} hold.
If $a\in \maclW ^{p,\infty ,p,q,q,q}_{(\omega )}(\rr {3d})$,
then $\op (a)$ from
$\Sigma _1(\rr d)$ to $\Sigma _1'(\rr d)$ is
uniquely extendable to a continuous
map from $M^{q',p'}_{(\omega _1)}(\rr d)$ to
$W^{p,q}_{(\omega _2)}(\rr d)$,
and
\begin{equation}
\begin{aligned}
\nm {\op (a)f}{W^{p,q}_{(\omega _2)}}
&\lesssim
\nm a{\maclW ^{p,\infty ,p,q,q,q}_{(\omega )}}
\nm f{M^{q',p'}_{(\omega _1)}},
\\[1ex]
a &\in \maclW ^{p,\infty ,p,q,q,q}_{(\omega )}(\rr {3d}),
\ 
f\in M^{q',p'}_{\sigma ,(\omega _1)}(\rr d).
\end{aligned}
\end{equation}
\end{thm}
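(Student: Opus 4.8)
The plan is to factor $\op (a)$ through a Kohn--Nirenberg operator and then apply the Wiener amalgam mapping property \eqref{Eq:OpContSpec1Modif}. First I would run the scheme behind Theorem \ref{Thm:AmpOpPsOpMod}, but with the Wiener amalgam spaces in place of the $M$-type spaces. Recall from \eqref{Eq:aToa0B}$'$ that $a_0=(\Trm _0\circ T_2\circ T_1)a$, with $T_1$, $T_2$ as in Lemmas \ref{Lemma:IdentTranslMod} and \ref{Lemma:ExpOpModAmpl}. By the same computations as in those lemmas (cf. the last sentence of Proposition \ref{Prop:ModSpacesBasicProp}), $T_1$ and $T_2$ restrict to homeomorphisms between the corresponding $W$-type modulation spaces, carrying $\maclW ^{p,\infty ,p,q,q,q}_{(\omega )}(\rr {3d})$ onto $W^{p,\infty ,p,q,q,q}$ spaces with transformed weights (the Lebesgue exponents are untouched). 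I would then apply the Wiener amalgam trace theorem, Theorem \ref{Thm:TraceMod2}, to the middle ($y$) variable. Since the amplitude carries the exponents $\infty$ in the middle position slot and $q$ in the middle frequency slot, $\Trm _0$ maps continuously into $W^{p,q}_{(\omega _0)}(\rr {2d})$, producing a unique $a_0\in W^{p,q}_{(\omega _0)}(\rr {2d})$ with $\op (a)=\op _0(a_0)$ and
$$
\nm {a_0}{W^{p,q}_{(\omega _0)}}\lesssim \nm a{\maclW ^{p,\infty ,p,q,q,q}_{(\omega )}}.
$$

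The trace step is legitimate precisely because of the two hypotheses. Since $q_1=q_2=q$, the Lebesgue condition \eqref{Eq:CondTraceLebr2} reduces to $\max (1/q,1)-1/q=1/q'\le 1/r$, which is exactly \eqref{Eq:MainThmAddLebCondAmal}; and \eqref{Eq:CondTraceWeightModif} is the summability hypothesis on $\vartheta$ required by Theorem \ref{Thm:TraceMod2}. For the second half I would invoke \eqref{Eq:OpContSpec1Modif} (equivalently \cite[Theorem 3.9]{TeoTof}) in its general weighted form: if $a_0\in W^{p,q}_{(\omega _0)}(\rr {2d})$ and the triple $\omega _0,\omega _1,\omega _2$ obeys the Kohn--Nirenberg relation \eqref{Eq:WeightPseudoA0Cond}, then $\op _0(a_0)$ is continuous from $M^{q',p'}_{(\omega _1)}(\rr d)$ to $W^{p,q}_{(\omega _2)}(\rr d)$ with
$$
\nm {\op _0(a_0)f}{W^{p,q}_{(\omega _2)}}\lesssim \nm {a_0}{W^{p,q}_{(\omega _0)}}\nm f{M^{q',p'}_{(\omega _1)}}.
$$
Concatenating the two displays and using $\op (a)=\op _0(a_0)$ on $\Sigma _1(\rr d)$ yields the asserted estimate, while continuity of $\op _0(a_0)$ provides the continuous extension.

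The main obstacle I anticipate is the weight bookkeeping that glues the two halves, since we are not handed $\omega _0$. I would construct it from $\omega _1,\omega _2$ by the invertible substitution underlying \eqref{Eq:WeightPseudoA0Cond}, setting $\omega _0(x,\eta ,\xi -\eta ,y-x)\asymp \omega _2(x,\xi )/\omega _1(y,\eta )$, so that \eqref{Eq:WeightPseudoA0Cond} holds by construction and $\omega _0\in \mascP _E(\rr {4d})$. The real work is to verify that this $\omega _0$ is simultaneously an admissible trace weight, i.e. that it satisfies the trace weight condition \eqref{Eq:CondTraceWeightOp} relative to $\omega$ and $\vartheta$. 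Here the three coordinate changes $a(x,y,\zeta )\mapsto a(x,x+y,\zeta )$, $e^{i\scal {D_\xi }{D_y}}$, and $y\mapsto 0$ compose to produce exactly the shift $(x,z,\zeta +\eta ,\xi -\zeta -\eta ,\eta ,z-x)$ appearing in \eqref{Eq:WeightPseudoCond}$'$; tracking the weight arguments through these substitutions and reading off that the given hypothesis \eqref{Eq:WeightPseudoCond}$'$ is equivalent to \eqref{Eq:CondTraceWeightOp} for the constructed $\omega _0$ is the delicate computation, mirroring the analysis behind Theorem \ref{Thm:AmpOpPsOpMod}.

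Finally, for uniqueness of the extension I would note that when $\max (p',q')<\infty$ the space $\Sigma _1(\rr d)$ is dense in $M^{q',p'}_{(\omega _1)}(\rr d)$, so any continuous extension of $\op (a)$ must coincide with $\op _0(a_0)$; the remaining endpoint cases are disposed of exactly as in the proof of Theorem \ref{Thm:TraceMod}, by embedding into a larger modulation space where density holds (the diagram \eqref{Eq:commdiagram}) or by a narrow-convergence argument.
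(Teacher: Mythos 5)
Your proposal is correct and follows essentially the same route as the paper, which obtains this theorem by combining the Wiener--amalgam analogue of Theorem \ref{Thm:AmpOpPsOpMod} (i.e. the factorization $a_0=(\Trm _0\circ T_2\circ T_1)a$ with the trace step supplied by Theorem \ref{Thm:TraceMod2}) with the mapping property \eqref{Eq:OpContSpec1Modif}, leaving the details to the reader. Your reduction of \eqref{Eq:CondTraceLebr2} to \eqref{Eq:MainThmAddLebCondAmal} and the weight bookkeeping are exactly the omitted details.
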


\par

\subsection{Pseudo-differential operators with amplitudes in
weighted Gevrey analogies of the H{\"o}rmander class
$S_{0,0}^0$}

\par

We shall next apply Theorem \ref{Thm:AmpOpModCont} to deduce
some continuity properties for amplitude type pseudo-differential
operators with certain smooth symbols when acting on modulation
spaces. For any $\omega \in \mascP _E(\rr d)$ and $s\ge 1$,
let
\begin{align}
\nm f{S^{(\omega )}_{s;h}}
&\equiv
\sup _{\alpha \in \nn d}
\left (
\frac {\nm {(\partial ^\alpha f)/\omega}{L^\infty} }{h^{|\alpha |\alpha !^s}}
\right )
\\[1ex]
S^{(\omega )}(\rr d)
&=
\sets {f\in C^\infty (\rr d)}
{\nm {(\partial ^\alpha f)/\omega}{L^\infty} <\infty ,\
\text{for every}\ \alpha \in \nn d}
\\[1ex]
S^{(\omega )}_s(\rr d)
&=
\sets {f\in C^\infty (\rr d)}{\nm f{S^{(\omega )}_{s;h}} <\infty ,\
\text{for some}\ h>0}
\intertext{and}
S^{(\omega )}_{0,s}(\rr d)
&=
\sets {f\in C^\infty (\rr d)}{\nm f{S^{(\omega )}_{s;h}} <\infty ,\
\text{for every}\ h>0}
\end{align}

\par

For $\omega \in \mascP _E(\rr {d})$ and $s\ge 1$, set
\begin{align}
\omega _{r} (x,\xi )
=
\omega (x)\eabs {\xi}^{-r}
\quad \text{and}\quad
\omega _{r,s} (x,\xi )
=
\omega (x) e^{-r|\xi |^{\frac 1s}}.
\label{Eq:omegaExpand1}
\end{align}

\par

The following lemma is a straight-forward consequence of
\cite[Proposition 2.5]{AbCaTo}, \cite[Proposition 2.7]{GroTo},
Proposition \ref{Prop:ModSpacesBasicProp}
(2) and the fact that
$$
M^{p,\infty}_{(\omega _2)}(\rr d) \subseteq M^{p,q}_{(\omega _1)}(\rr d)
\quad \text{when}\quad
q\in (0,\infty ],\ N>\frac dq,\ \omega _2(x,\xi ) = \omega _1(x,\xi )\eabs \xi ^N
$$
(see also \cite{CoNiRo}). The
details are left for the reader. Here recall Subsection \ref{subsec1.1}
for the definition of involved weight classes.

\par

\begin{lemma}\label{Lemma:SymbModDescr}
Let $q\in (0,\infty ]$, $s\ge 1$, $\omega \in \mascP _E(\rr d)$, and let
$\omega _{r}$ and $\omega _{r,s}$ be as in
\eqref{Eq:omegaExpand1}. 
Then the following is true:
\begin{enumerate}
\item if in addition $\omega \in \mascP (\rr d)$, then
$$
S^{(\omega )}(\rr d)
=
\underset{r>0}{\textstyle{\bigcap}} M^{\infty ,q}_{(1/\omega _{r})}(\rr d) \text ;
$$

\vrum

\item  if in addition $\omega \in \mascP _{E,s}^0(\rr d)$, then
$$
S^{(\omega )}_s(\rr d)
=
\underset{r>0}{\textstyle{\bigcup}} M^{\infty ,q}_{(1/\omega _{r,s})}(\rr d) \text ;
$$

\vrum

\item  if in addition $\omega \in \mascP _{E,s}(\rr d)$, then
$$
S^{(\omega )}_{0,s}(\rr d)
=
\underset{r>0}{\textstyle{\bigcap}} M^{\infty ,q}_{(1/\omega _{r,s})}(\rr d)
\text .
$$
\end{enumerate}
\end{lemma}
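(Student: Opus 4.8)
The plan is to derive all three identities from the corresponding $q=\infty$ characterizations supplied by \cite[Proposition 2.5]{AbCaTo} and \cite[Proposition 2.7]{GroTo}, and then to verify that the intersections in (1) and (3) and the union in (2) are independent of the value of $q\in(0,\infty]$. The cited propositions recast the derivative bounds defining $S^{(\omega)}(\rr d)$, $S^{(\omega)}_s(\rr d)$ and $S^{(\omega)}_{0,s}(\rr d)$ as short-time Fourier transform estimates, which is precisely membership in $M^{\infty,\infty}_{(1/\omega_r)}(\rr d)$ or $M^{\infty,\infty}_{(1/\omega_{r,s})}(\rr d)$; Proposition \ref{Prop:ModSpacesBasicProp}~(2) ensures these memberships do not depend on the choice of window $\phi\in\Sigma_1(\rr d)\setminus 0$, so the two formulations agree. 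Thus it remains to prove, for arbitrary $q\in(0,\infty]$, that
$$
\bigcap_{r>0} M^{\infty,q}_{(1/\omega_r)}(\rr d)=\bigcap_{r>0} M^{\infty,\infty}_{(1/\omega_r)}(\rr d),
$$
together with the analogue for the intersection of the spaces $M^{\infty,q}_{(1/\omega_{r,s})}$ in (3) and the corresponding statement for the \emph{union} in (2).

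The two tools I would use are the embedding $M^{\infty,\infty}_{(\varpi_2)}(\rr d)\subseteq M^{\infty,q}_{(\varpi_1)}(\rr d)$, valid whenever $\varpi_2=\varpi_1\eabs\xi^N$ with $N>d/q$, and the monotonicity from Proposition \ref{Prop:ModSpacesBasicProp}~(3): $M^{\infty,q}_{(\varpi)}(\rr d)$ increases with $q$ and decreases with $\varpi$. The inclusions ``$\subseteq$'' follow at once from monotonicity in $q$ (since $q\le\infty$), after intersecting respectively taking unions over $r$. For the reverse inclusions I would fix one parameter $r>0$, pick $N>d/q$, and absorb the polynomial factor $\eabs\xi^N$ produced by the embedding into a shift of the weight parameter. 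In the polynomial case (1) one has $(1/\omega_r)\eabs\xi^N=1/\omega_{r+N}$ exactly, so the embedding reads $M^{\infty,\infty}_{(1/\omega_{r+N})}(\rr d)\subseteq M^{\infty,q}_{(1/\omega_r)}(\rr d)$, and any element of the $q=\infty$ intersection already lies in $M^{\infty,\infty}_{(1/\omega_{r+N})}(\rr d)$; the Gevrey intersection in (3) is handled identically, with the absorption performed as described below. For the union in (2) the direction is reversed: an element of $M^{\infty,\infty}_{(1/\omega_{r,s})}(\rr d)$ is placed, via the embedding and weight monotonicity, into $M^{\infty,q}_{(1/\omega_{r',s})}(\rr d)$ for any $r'<r$, which again lies in the union.

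The main point — and the only step that is not purely formal — is the absorption in the Gevrey cases (2) and (3), where the weight carries the factor $e^{\pm r|\xi|^{1/s}}$. There the polynomial gain $\eabs\xi^N$ from the embedding cannot be compensated by keeping $r$ fixed, but only by passing to a strictly larger exponential weight, using $\eabs\xi^N\lesssim e^{(r'-r)|\xi|^{1/s}}$ for $r'>r$ together with weight monotonicity; this is harmless precisely because we range over all $r>0$ in the intersection and in the union. The hypotheses $\omega\in\mascP(\rr d)$, $\omega\in\mascP_{E,s}^0(\rr d)$ and $\omega\in\mascP_{E,s}(\rr d)$ enter only to guarantee that the weights $\omega_r$ and $\omega_{r,s}$ are moderate — so that the modulation spaces are well defined and the cited characterizations apply — after which the argument above is routine and the remaining details may be left to the reader.
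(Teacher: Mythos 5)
Your proposal is correct and follows essentially the same route as the paper, which likewise derives the lemma from \cite[Proposition 2.5]{AbCaTo} and \cite[Proposition 2.7]{GroTo}, window independence (Proposition \ref{Prop:ModSpacesBasicProp}~(2)), and the embedding $M^{p,\infty}_{(\omega_2)}(\rr d)\subseteq M^{p,q}_{(\omega_1)}(\rr d)$ for $\omega_2=\omega_1\eabs\xi^N$, $N>d/q$. Your explicit treatment of the absorption of $\eabs\xi^N$ into a shifted polynomial exponent in (1) and into a strictly larger (resp.\ smaller) Gevrey parameter in (3) (resp.\ (2)) is exactly the detail the paper leaves to the reader.
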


\par

We have now the following. Here the involved weights should satisfy
\begin{equation}\label{Eq:omegaCondSimpl}
\frac {\omega _1(y,\xi )}{\omega _2(x,\xi )}
\gtrsim
\omega _0(x,y,\xi ).
\end{equation}

\par

\begin{prop}
Let $p,q\in (0,\infty ]$ and $s\ge 1$.
Then the following is true:
\begin{enumerate}
\item if 
$\omega _0\in \mascP (\rr {3d})$,
$\omega _j\in \mascP (\rr {2d})$, $j=1,2$, satisfy
\eqref{Eq:omegaCondSimpl} and $a\in S^{(\omega _0)}(\rr {3d})$,
then $\op (a)$ is continuous from $M^{p,q}_{(\omega _1)}(\rr d)$
to $M^{p,q}_{(\omega _2)}(\rr d)$;

\vrum

\item if 
$\omega _0\in \mascP _{E,s}(\rr {3d})$,
$\omega _j\in \mascP _{E,s}(\rr {2d})$, $j=1,2$, satisfy
\eqref{Eq:omegaCondSimpl} and $a\in S^{(\omega _0)}_{0,s}(\rr {3d})$,
then $\op (a)$ is continuous from $M^{p,q}_{(\omega _1)}(\rr d)$
to $M^{p,q}_{(\omega _2)}(\rr d)$;

\vrum

\item if 
$\omega _0\in \mascP _{E,s}^0(\rr {3d})$,
$\omega _j\in \mascP _{E,s}^0(\rr {2d})$, $j=1,2$, satisfy
\eqref{Eq:omegaCondSimpl} and $a\in S^{(\omega _0)}_{s}(\rr {3d})$,
then $\op (a)$ is continuous from $M^{p,q}_{(\omega _1)}(\rr d)$
to $M^{p,q}_{(\omega _2)}(\rr d)$.
\end{enumerate}
\end{prop}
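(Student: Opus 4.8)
The plan is to obtain all three parts from Theorem~\ref{Thm:AmpOpModCont}$'$, which already provides continuity of amplitude operators on $M^{p,q}$ spaces for $p,q\in(0,\infty]$; the three cases share a single skeleton and differ only in one elementary weight estimate, governed by the modulation-space description of the symbol classes in Lemma~\ref{Lemma:SymbModDescr}. Throughout I would keep $p,q$ fixed and reduce the ``amplitude'' exponents of Theorem~\ref{Thm:AmpOpModCont}$'$ to the quasi-Banach range.

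First I would fix the auxiliary parameters of Theorem~\ref{Thm:AmpOpModCont}$'$. Denoting its exponents by $\tilde p,\tilde q,\tilde r$, I choose $\tilde p=\infty$, $\tilde q=\min(p,q,1)\in(0,1]$, $\tilde r=\infty$, $\vartheta\equiv1$, and the target indices $p_1=p_2=p$, $q_1=q_2=q$. Then $\tfrac1{\tilde p}+\min(0,\tfrac1{\tilde q}-1)=0=\tfrac1{p_2}-\tfrac1{p_1}=\tfrac1{q_2}-\tfrac1{q_1}$ and $\tilde q\le p,q\le\tilde p=\infty$, so the index relations hold; the condition \eqref{Eq:MainThmAddLebCond} becomes $\tfrac1{\tilde q}-\tfrac1{\tilde q}=0\le\tfrac1{\tilde r}=0$, which needs no strictness since $\tilde q\le1=\min(1,\tilde p)$; and with $\vartheta\equiv1$, $\tilde r=\infty$ one has $\sup_{\xi _3}\nm{\vartheta(\cdot,\xi _3)}{L^{\infty}}=1$, so \eqref{Eq:CondTraceWeightModif} holds. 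The amplitude class demanded by the theorem is then $\maclM ^{\infty ,\infty ,\infty ,\tilde q,\tilde q,\tilde q}_{(\omega )}(\rr {3d})$; since its spatial block is entirely $\infty$ and an iterated supremum is invariant under the shear $(x,y)\mapsto(x,x+y)$ carried by the $\maclL$-norm, this space coincides with the ordinary modulation space $M^{\infty ,\tilde q}_{(\omega )}(\rr {3d})$, the weight being evaluated at the same sheared point as the short-time Fourier transform (alternatively, invoke Lemma~\ref{Lemma:IdentTranslMod}).

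Next I would produce the weight. Writing a generic point of $\rr {6d}$ as $(x,y,\zeta ,\xi ,\eta ,z)$, set
$$
\omega (x,y,\zeta ,\xi ,\eta ,z)=\frac{\omega _2(x,\xi +\zeta )}{\omega _1(y,\zeta -\eta )},
$$
which lies in $\mascP _E(\rr {6d})$ as a product and quotient of moderate weights composed with linear maps (in part (1) it even lies in $\mascP$). Substituting the arguments prescribed by \eqref{Eq:WeightPseudoCond}$'$ and using $\vartheta\equiv1$, the right-hand side collapses to $\omega _2(x,\xi )/\omega _1(z,\zeta )$, so \eqref{Eq:WeightPseudoCond}$'$ holds with equality. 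It then only remains to verify $a\in M^{\infty ,\tilde q}_{(\omega )}(\rr {3d})$, after which Theorem~\ref{Thm:AmpOpModCont}$'$ yields the asserted continuity of $\op (a)$ from $M^{p,q}_{(\omega _1)}(\rd)$ to $M^{p,q}_{(\omega _2)}(\rd)$.

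The membership is the heart of the matter and the only place where (1), (2), (3) diverge. By Lemma~\ref{Lemma:SymbModDescr} the symbol satisfies $a\in M^{\infty ,\tilde q}_{(W)}(\rr {3d})$ with $W=1/(\omega _0)_{r}$ for every $r$ in part (1), $W=1/(\omega _0)_{r,s}$ for every $r$ in part (2), and $W=1/(\omega _0)_{r_0,s}$ for some fixed $r_0$ in part (3). Since $M^{\infty ,\tilde q}_{(\cdot )}$ decreases in the weight (Proposition~\ref{Prop:ModSpacesBasicProp}), it suffices to bound $\omega\lesssim W$ for an admissible $W$. Absorbing the frequency shifts $\xi$ and $\eta$ by the moderateness of $\omega _1,\omega _2$ in their frequency slot and then applying \eqref{Eq:omegaCondSimpl} in the form $\omega _2(x,\zeta )/\omega _1(y,\zeta )\lesssim 1/\omega _0(x,y,\zeta )$, I get $\omega\lesssim \eabs{(\xi ,\eta ,z)}^{N}/\omega _0(x,y,\zeta )$ for some $N$ in part (1), and $\omega\lesssim e^{r'(|\xi |^{1/s}+|\eta |^{1/s})}/\omega _0(x,y,\zeta )$ in parts (2), (3). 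In parts (1) and (2) the symbol lies in an intersection, so choosing $N$ (respectively $r$) large enough matches $\omega$ against an admissible $W$; in part (3) the union fixes the budget $r_0$, and here one exploits $\omega _1,\omega _2\in\mascP _{E,s}^0$ to take the shift rate $r'=r_0/2$, whence $e^{r'(|\xi |^{1/s}+|\eta |^{1/s})}\lesssim e^{r_0|(\xi ,\eta ,z)|^{1/s}}$ and $\omega\lesssim 1/(\omega _0)_{r_0,s}$. This last quantifier bookkeeping in part (3), where a single weight budget must pay for the shifts coming from both $\omega _1$ and $\omega _2$, is the main obstacle; the remaining steps are the routine index and moderateness verifications indicated above.
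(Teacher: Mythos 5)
Your proposal is correct and follows essentially the same route as the paper: reduce to Theorem \ref{Thm:AmpOpModCont}$'$ with amplitude exponents $(\infty ,\min (p,q,1))$, $r=\infty$ and $\vartheta \equiv 1$, then use Lemma \ref{Lemma:SymbModDescr} together with moderateness of the weights to place $a$ in the required $\maclM ^{\infty ,\infty ,\infty ,\tilde q,\tilde q,\tilde q}_{(\omega )}$-class. The only (cosmetic) difference is that you choose $\omega$ so that \eqref{Eq:WeightPseudoCond}$'$ holds with equality and push all moderateness estimates into the comparison with $1/(\omega _0)_{r}$ resp. $1/(\omega _0)_{r,s}$, whereas the paper builds $\omega$ from $1/\omega _0$ and verifies \eqref{Eq:WeightPseudoCond}$'$ directly; your write-up also makes explicit the exponent bookkeeping and the quantifier issue in part (3) that the paper leaves implicit.
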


\par

\begin{proof}
We only prove (3). The other assertions follow by similar argument
and are left for the reader.

\par

Since all weights are moderate, \eqref{Eq:omegaCondSimpl} gives
$$
\frac {\omega _2(x,\xi )}{\omega _1(z,\zeta )}
\lesssim
\omega _0(x,z,\zeta )^{-1}e^{r|\xi -\zeta |^{\frac 1s}}
$$
for every $r>0$. Hence, using that
$$
e^{r|x+y|^{\frac 1s}} \le e^{r|x|^{\frac 1s}}e^{r|y|^{\frac 1s}}
$$
we obtain
$$
\frac {\omega _2(x,\xi )}{\omega _1(z,\zeta )}
\lesssim
\omega _0(x,z,\zeta +\eta )^{-1}
e^{r(|\xi -\zeta |^{\frac 1s} +|\xi -\zeta -\eta |^{\frac 1s} + |z-x|^{\frac 1s})}
$$
for every $r>0$. The result now follows by combining
Theorem \ref{Thm:AmpOpModCont} with Lemma
\ref{Lemma:SymbModDescr}.
\end{proof}

\par

\subsection{Schatten and Nuclearity properties for
pseudo-differential operators of amplitude type}

\par

Next we shall combine Theorem \ref{Thm:AmpOpPsOpMod}
with results in \cite{Toft20}
to find Schatten-von Neumann and nuclear properties for
pseudo-differential operators of amplitude type with symbols
in suitable modulation spaces.

\par

Let $p\in (0,\infty ]$, $\mascB _1$ and $\mascB _2$ be quasi-Banach spaces, and
let $T\in \maclL (\mascB _1,\mascB _2)$.
Then the singular number of $T$ of order $j\ge 1$ is defined by
$$
\sigma _j(T) \equiv \inf \nm {T-T_j}{\maclL(\mascB _1,\maclB _2)},
$$
where the infimum is taken over all $T_j\in \maclL (\mascB _1,\mascB _2)$
with rank at most $j-1$. Then $\mascI _p(\mascB _1,\mascB _2)$
consists of all $T\in \maclL (\mascB _1,\mascB _2)$ such that
$$
\nm T{\mascI _p(\mascB _1,\mascB _2)}
\equiv
\nm {\{ \sigma _j(T)\} _{j=1}^\infty}{\ell ^p(\mathbf Z_+)}
$$
is finite. We observe that $\mascI _\infty (\mascB _1,\mascB _2)
= \maclL (\mascB _1,\mascB _2)$ with the same quasi-norms.
We recall that if in addition $\mascB _1$ and $\mascB _2$ are
Hilbert spaces, then $\mascI _2 (\mascB _1,\mascB _2)$
and $\mascI _1 (\mascB _1,\mascB _2)$ are the sets of
Hilbert-Schmidt and trace class operators from $\mascB _1$
to $\mascB _2$, respectively.

\par


\par

The following result is now a straight-forward consequence
of \cite[Theorem 3.4]{Toft19} and Theorem
\ref{Thm:AmpOpPsOpMod}. The details are left for the reader.

\par

\renewcommand{\rubrik}{Theorem \ref{Thm:SchattenPseudo}$'$\!}

\par

\begin{tom} 
Let $p,q,r\in (0,\infty ]$ be such that $q\le \min (p,p')$
and \eqref{Eq:MainThmAddLebCond} hold,
$\omega \in \mascP _E(\rr {6d})$ and
$\omega _1,\omega _2\in \mascP _E(\rr {2d})$ be such that
\eqref{Eq:WeightPseudoCond}$'$ holds with $\vartheta$ being
the same as in Theorem \ref{Thm:AmpOpModCont}$'$.
If $a\in \maclM ^{p,\infty ,p,q,q,q}_{(\omega )}(\rr {3d})$, then
$\op (a)\in \mascI _p(M^2_{(\omega _1)}(\rr d),M^2_{(\omega _2)}(\rr d))$,
and
$$
\nm {\op (a)}{\mascI _p(M^2_{(\omega _1)},M^2_{(\omega _2)})}
\lesssim \nm a {\maclM ^{p,\infty ,p,q,q,q}_{(\omega )}},\quad
a\in \maclM ^{p,\infty ,p,q,q,q}_{(\omega )}(\rr {3d}).
$$
\end{tom}

\par

Next we perform similar discussions for nuclear operators.
Let $p\in (0,1]$, $\mascB _1$ be a Banach space with dual $\mascB _1'$,
$\mascB _2$ be a quasi-Banach space, and
let $T\in \maclL (\mascB _1,\mascB _2)$. Then $T$ is said to be
in the class $\mascN _p(\mascB _1,\mascB _2)$ of nuclear operators of
order $p$, if there are sequences $\{ \ep _j\} _{j=1}^\infty \subseteq \mascB _1'$
and $\{ e_j\} _{j=1}^\infty \subseteq \mascB _2$ such that
\begin{equation}\label{Eq:NuclearDef}
Tf = \sum _{j=1}^\infty \scal {\ep _j}f e_j
\quad \text{and}\quad
\sum _{j=1}^\infty (\nm {\ep _j}{\mascB _1'}\nm {e_j}{\mascB _2})^p
<\infty ,
\quad
f\in \mascB _1. 
\end{equation}
We also set
$$
\nm T{\mascN _p(\mascB _1,\mascB _2)}
\equiv
\inf \left ( \sum _{j=1}^\infty (\nm {\ep _j}{\mascB _1'}\nm {e_j}{\mascB _2})^p
 \right )^{\frac 1p},
$$
where the infimum is taken over all
$\{ \ep _j\} _{j=1}^\infty \subseteq \mascB _1'$
and $\{ e_j\} _{j=1}^\infty \subseteq \mascB _2$ such that
\eqref{Eq:NuclearDef} holds true. (See e.{\,}g.
\cite{DeRuTo,DeRuWa,Toft20}.)

\par

We observe that $\mascN _p(\mascB _1,\mascB _2)$
decreases with $\mascB _1$, and increases with
$p$ and $\mascB _2$. If in addition
$\mascB _1$ and $\mascB _2$ are Hilbert spaces, then
the spectral theorem implies that
$$
\mascN _p(\mascB _1,\mascB _2)
=
\mascI _p(\mascB _1,\mascB _2)
$$
with the same quasi-norms. (See e.{\,}g. \cite{Grot}.)
As a consequence we have
\begin{equation}\label{Eq:NuclearSchattenComp}
\mascN _p(M^\infty _{(\omega _1)}(\rr d),M^p_{(\omega _2)}(\rr d))
\subseteq
\mascN _p(M^2_{(\omega _1)}(\rr d),M^2_{(\omega _2)}(\rr d))
=
\mascI _p(M^2_{(\omega _1)}(\rr d),M^2_{(\omega _2)}(\rr d)).
\end{equation}

The following result is now a consequence
of \cite[Theorem 4.2]{Toft20} and Theorem
\ref{Thm:AmpOpPsOpMod}. The details are left for the reader.

\par

\renewcommand{\rubrik}{Theorem \ref{Thm:NuclPseudo}$'$\!}

\par

\begin{tom} 
Let $p\in (0,1]$,
$\omega \in \mascP _E(\rr {6d})$ and
$\omega _1,\omega _2\in \mascP _E(\rr {2d})$ be such that
\eqref{Eq:WeightPseudoCond} holds.
If $a\in \maclM ^{p,\infty ,p,p,p,p}_{(\omega )}(\rr {3d})$, then
$\op (a)\in \mascN _p(M^\infty _{(\omega _1)}(\rr d),M^p_{(\omega _2)}(\rr d))$,
and
$$
\nm {\op (a)}{\mascN _p(M^\infty _{(\omega _1)},M^p_{(\omega _2)})}
\lesssim \nm a {\maclM ^{p,\infty ,p,p,p,p}_{(\omega )}},\quad
a\in \maclM ^{p,\infty ,p,p,p,p}_{(\omega )}(\rr {3d}).
$$
\end{tom}

\par

Due to \eqref{Eq:NuclearSchattenComp} it is evident that
Theorem \ref{Thm:NuclPseudo}$'$ improves Theorem
\ref{Thm:SchattenPseudo}$'$ when $p\in (0,1]$.

\par

\begin{rem}
There are other nuclearity
properties for pseudo-differential operators of the types
$\op _A(a)$ with symbols in
modulation spaces, available in the literature and which are
comparable with those in \cite{Toft20}
(see e.{\,}g. \cite{DeRuTo,DeRuWa}).
In this context we have chosen to use the nuclearity
results in \cite{Toft20} because they seems to be sharp.
\end{rem}

\par

\end{document}